\newcommand{\bE}{\mathbf{E}}
\newcommand{\R}{\mathbb{R}}
\newcommand{\DLP}{\text{\normalfont{DLP}}}
\newcommand{\LP}{\mathsf{LP}}
\newcommand{\ALG}{\text{\normalfont{ALG}}}
\newcommand{\HO}{\text{\normalfont{HO}}}
\newcommand{\AuxHO}{\mathsf{H}}
\newcommand{\I}{\text{\normalfont{I}}}
\newcommand{\II}{\text{\normalfont{II}}}
\newcommand{\FF}{\mathsf{ALG2}}
\newcommand{\FD}{\mathsf{ALG3}}
\newcommand{\FR}{\mathsf{ALG4}}
\newcommand{\MR}{\mathsf{ALG6}}
\newcommand{\LPT}{\mathsf{ALG5}}
\newcommand{\real}{\mathbb{R}}
\newcommand{\calA}{\mathcal{A}}
\newcommand{\calR}{\mathcal{R}}
\begin{document}

\RUNAUTHOR{Sun, Wang, and Zhou}
\RUNTITLE{Near-Optimal Primal-Dual Algorithms for NRM}

\TITLE{Near-Optimal Primal-Dual Algorithms for Quantity-Based Network Revenue Management}

\ABSTRACT{
We study the canonical quantity-based network revenue management (NRM) problem where the decision-maker must irrevocably accept or reject each arriving customer request with the goal of maximizing the total revenue given limited resources. The exact solution to the problem by dynamic programming is computationally intractable due to the well-known curse of dimensionality. Existing works in the literature make use of the solution to the deterministic linear program (DLP) to design asymptotically optimal algorithms. Those algorithms rely on repeatedly solving DLPs to achieve near-optimal regret bounds. It is, however, time-consuming to repeatedly compute the DLP solutions in real time, especially in large-scale problems.

In this paper, we propose innovative algorithms for the NRM problem that are easy to implement and do not require solving any DLPs. Our algorithm achieves a regret bound of $o(\sqrt{T})$, where $T$ is the system size. To the best of our knowledge, this is the first NRM algorithm that (i) has an $o(\sqrt{T})$ asymptotic regret bound, and (ii) does not require solving any DLPs.

}

\ARTICLEAUTHORS{
\AUTHOR{Rui Sun}\AFF{Alibaba Group US,
Bellevue, WA, 98004,
\EMAIL{sunruimit@gmail.com}}
\AUTHOR{Xinshang Wang}\AFF{Alibaba Group US,
Bellevue, WA, 98004,
\EMAIL{xinshang.w@alibaba-inc.com}}
\AUTHOR{Zijie Zhou}\AFF{Operations Research Center,
Massachusetts Institute of Technology, Cambridge, MA, 02139,
\EMAIL{zhou98@mit.edu}}
}

\maketitle

\section{Introduction}

In this paper, we consider the canonical network revenue management (NRM) problem
\citep{williamson1992airline, talluri1998analysis}, 
which finds applications in airline, retail, advertising, and hospitality \citep{ talluri2004revenue}. 
The NRM problem is stated as follows: there is a set of resources with limited capacities that are available over a finite time horizon. Customers of different types arrive sequentially over time. The type of a customer is defined by the customer's consumption of resources and the price she pays. Each type of customer requests a certain amount of each resource. 
Upon a customer's arrival, the decision-maker needs to decide whether to accept or reject the customer. 
Accepting the customer generates a certain amount of revenue, which equals the price the customer pays, and also consumes certain units of each resource associated with the customer's type. Rejecting the customer generates no revenue and uses no resources. The objective is to maximize the total expected revenue during the entire horizon under the capacity constraints of all the resources. 

The model we focus on in this paper is known as the ``quantity-based'' NRM model. There exists another well-known NRM model called the ``price-based'' model. In the price-based NRM problem, the decision-maker chooses product prices rather than admission (accept/reject) decisions. These two models are different but are equivalent in certain special cases, such as the single resource multi-product model suggested in \cite{maglaras2006dynamic}.


A classic example of the NRM problem is airline seat allocation. In this problem, resources correspond to flight legs and the capacity of a resource corresponds to the number of seats on the flight.
Each customer requests a flight itinerary that consists of a single or multiple flight legs (thus a network structure) and pays the ticket price of the itinerary. Arriving customers are sorted into different classes based on combinations of itinerary and price. This allows the airline to maximize the expected revenue by allocating seats on different flights to different customer classes. 

In theory, the NRM problem can be solved by dynamic programming. The state is described jointly by time and the remaining capacities of all the resources. 
The optimal admission policy compares the revenue of each arriving customer with the marginal value of the requested resources and accepts the customer if the former is larger. However, the time complexity of this optimal algorithm grows exponentially with the number of resources, and is thus computationally intractable. In this paper, we study approximate algorithms that are easy to implement and have provable performance guarantees.


One of the most used mathematical tools for designing and analyzing approximate algorithms in the NRM literature is the deterministic linear program (DLP) formulation. See Section \ref{sec:review} for a detailed review of algorithms based on solutions of the DLP. Typically, the DLP is formulated by replacing random variables in the model with their distributional information (e.g., replacing random demand variables with their expectations). 

The DLP formulation is essential in the following two aspects.
First, an optimal solution to the DLP is a guideline for making online decisions. Specifically, the primal solution of the DLP can be used to design booking limit heuristics, nesting strategies
\citep{talluri2004revenue}, and probabilistic assignment policies \citep{reiman2008asymptotically, jasin2012re, bumpensanti2020re}.
In addition, the dual solution of the DLP serve as bid prices in bid-price control heuristics \citep{talluri1998analysis}. 
Second, the DLP provides an upper bound on the optimal revenue since the capacity constraints in the DLP are only satisfied in expectation. The DLP upper bound (and its variants) is considered the benchmark performance in the analysis of many algorithms.

    
    
    



In the literature of quantity-based NRM, \citet{reiman2008asymptotically}, \citet{jasin2012re}, and \citet{bumpensanti2020re} show much better regret bounds compared to preceding results by (re-)solving DLPs multiple times. 
More precisely,
 \cite{reiman2008asymptotically} show that strategically resolving the DLP once achieves a regret bound of $o(\sqrt{T})$, where $T=1,2\ldots$ is the system size, improving upon the classic $O(\sqrt{T})$ bounds \citep{talluri2004revenue}. \cite{jasin2012re} show that  resolving the DLP in every time period achieves a regret bound of $O(1)$.
\cite{bumpensanti2020re} prove a uniform regret bound of $O(1)$ that does not depend on any numerical values in the solutions of the DLP. 


Despite the theoretical developments for the NRM problem, there are challenges in implementing those algorithms that require solving DLPs in an online manner.
 First, direct linear system solvers, which are necessary for solving DLPs, are time consuming for large-scale problems. Second, consider  e-commerce platforms that must deal with more than hundreds of customer visits per second \footnote{For example,  62 million monthly visits took place on Expedia in 2018 (\cite{expedia}), and 193 million bookings on Airbnb in 2020 (\cite{airbnb})}. In a typical implementation of online algorithms on such e-commerce platforms, DLPs are solved on dedicated computational servers, which sync with an online streaming database at a much lower rate. In such cases, algorithms handling the online customer traffic are not able to acquire DLP solutions to real-time models.



To overcome these difficulties, we propose primal-dual algorithms that are amenable to online platforms. Our algorithm does not require solving any linear programs or performing any matrix factorization, and still achieves an $o(\sqrt{T})$ regret bound.

Similar motivations for such fast online algorithms that do not require solving any linear programs are also considered in \cite{li2020simple}.
Specifically, \cite{li2020simple} study the online linear programming model, which is a generalization of the quantity-based NRM model studied in this paper. We show that our algorithm achieves a regret bound of $o(\sqrt{T})$, while the algorithm in \cite{li2020simple} for the generalized problem achieves a regret bound of $O(\sqrt T)$. We will discuss more differences between our work and \cite{li2020simple} in Section~\ref{sec:review}.

\subsection{Overview of main results}

Table \ref{table1} summarizes the related regret bounds proved in the literature, and our results in this paper. In the table, $T$ stands for the scaling factor, namely the size of the system.
 
The algorithms proposed in \cite{reiman2008asymptotically,jasin2012re, bumpensanti2020re} are re-solving algorithms for the NRM problem. The regret bound proved in \citet{reiman2008asymptotically} grows slower than $O(\sqrt{T})$, and the regret bounds proved in \citet{jasin2012re, bumpensanti2020re} are $O(1)$. 
The main intuition behind their algorithms is as follows: if the algorithm makes a mistake of accepting too many customers of a certain type in an earlier stage, then after re-solving the DLP, the algorithm is able to correct the mistake by accepting fewer customers of that type in later periods. \cite{li2019online} also prove an $o(\sqrt{T})$ regret bound but for a more general online linear programming model. All those algorithms that obtain $o(\sqrt{T})$ regret bounds require solving the DLP at least twice. In particular, at least one of the DLPs has to be solved online. That is, the algorithms need to wait for the DLP solution before making any further online decisions.

In this paper, we propose three two algorithms that do not require solving any DLPs for the NRM problem. Both algorithms are based on primal-dual frameworks, and they apply online convex optimization \citep{hazan2019introduction} techniques to learn the optimal dual values (or bid prices) of the resources.
We introduce our first algorithm (Algorithm \ref{alg:ff}) in Section~\ref{sec:ff}.  
We show that Algorithm~\ref{alg:ff} has a regret bound of $O(\sqrt{T})$. This algorithm is a warm up for our subsequent main results. It can also be viewed as a direct adaptation of the existing algorithms, such as \cite{li2019online}, \cite{li2020simple}, \cite{balseiro2020best},
\cite{balseiro2020dual},
\cite{agrawal2014fast}
and \cite{agrawal2016linear}.

In Section~\ref{sec:fd}, 
we introduce a new algorithm (Algorithm~\ref{alg:fd}). It builds on Algorithm~\ref{alg:ff} and uses a ``thresholding'' technique for making admission decisions. The algorithms in \cite{bumpensanti2020re} use a similar thresholding technique. Their algorithm relies on repeatedly solving DLPs to calculate the threshold values.
By contrast, in our algorithm, the threshold values are adaptively computed based on a novel statistical property of the admission decisions. 
We show that Algorithm~\ref{alg:fd} has a regret bound of $O(T^{3/8} (\log T)^{\frac{5}{4}})$. 
To the best of our knowledge, Algorithm~\ref{alg:fd} is the first algorithm, for any dynamic multi-resource allocation problem, that (i) has an $o(\sqrt{T})$ asymptotic regret bound and (ii) does not require solving any DLPs (or equivalently time-consuming problems).



In Section~\ref{sec:numerical},
we provide results from numerical experiments to demonstrate the performance of all these algorithms.

\begin{table}[ht] 
\small
\caption{List of regret bounds for models applicable to NRM}\label{table1}
\begin{center}
\begin{tabular}{ | m{5cm} | m{3cm} | m{3cm}| m{4cm} | }
\hline
Previous work       & Arrival assumptions & Regret bound & Number of times solving LPs \\
\hline
\citet{reiman2008asymptotically} & known i.i.d. &  $o(\sqrt T)$                      & $2$                                    \\
\hline
\citet{jasin2012re}    & known i.i.d.    &     $O(1)$                  & $ \Theta(T)  $                              \\
\hline
\citet{bumpensanti2020re}   & known i.i.d.   &    $O(1)$                   &   $\Theta(\log \log T)$                                    \\
\hline
\cite{agrawal2014fast}  & unknown i.i.d.        &     $O(\sqrt{T})$                  & 1  \\
\hline
\citet{li2019online}     & unknown i.i.d.          &     $O(\sqrt T)$                  &  $\Theta(\log T)$              \\
\hline
\citet{li2019online}     & unknown i.i.d.          &     $O(\log T \log \log T)$                  & $\Theta(T)$  \\
\hline
\citet{li2020simple}      & unknown i.i.d.   \& random permutation      &     $O(\sqrt T)$                  & $0$   \\
\hline
\citet{balseiro2020best, balseiro2020dual} & unknown i.i.d.  &     $O(\sqrt T)$                  & $0$   \\

\hline

\end{tabular}
\end{center}
\end{table}




\begin{table}[ht]
\small
\begin{center}
\begin{tabular}{ | m{6cm} | m{3cm}| m{5cm} | }
\hline
Our algorithms       & Regret bound & Number of times solving LPs \\
\hline
Algorithm~\ref{alg:ff}  &  $O(\sqrt T)$  &  $0$ \\
\hline
Algorithm~\ref{alg:fd} &  $o(\sqrt T)$  &  $0$ \\
\hline
\end{tabular}
\end{center}
\end{table}

\subsection{Other related work}\label{sec:review}

We study in this paper a canonical network revenue management problem that has a quantity-based formulation. Compared with a 
price-based formulation as studied in \cite{gallego1994optimal, gallego1997multiproduct},
both formulations can be solved using dynamic programming (DP). However, the curse of dimensionality renders the DP computationally intractable even for moderate size problems. Therefore, this has motivated studies in the literature to find approximate algorithms for both formulations.

In the price-based formulation, the decision-maker chooses posted prices of different products. In this stream of literature, \cite{gallego1997multiproduct} show that a static fixed-price policy is asymptotically optimal on the fluid scale (see the definitions of fluid scale optimality and diffusion scale optimality in Section~\ref{sec:asymptotic}). It achieves a regret bound of $O(\sqrt{T})$ given system size $T$.
\cite{maglaras2006dynamic} and \cite{chen2013simple} study the potential benefit of re-optimization to improve the static control policy. \cite{jasin2012re} and \cite{atar2013asymptotically} propose policies that are optimal on the diffusion scale. 



For the quantity-based formulation, a review of DLP-based algorithms can be found in the book by \cite{talluri2004revenue}.
Here, we briefly review a few commonly used control policies: booking limit control, nesting and bid-price control.

\textbf{Booking limit control} sets a fixed quota for each customer type and accepts customers in a first-come-first-serve (FCFS) fashion. The booking limits are given by DLP solutions, as proposed in \cite{williamson1992airline}. Later,
many variants of the DLP are proposed, e.g., \cite{wollmer1992airline} and \cite{li2004control}, to improve the practical performance of the booking limit control policy.

\textbf{Nesting} is a remedy strategy for booking limit control. The nesting policy ranks different customer types based on their revenue and resource usage, and allows high-ranking types to use the quota of the low-ranking types. The nesting policy is shown to be effective for single-resource problems. However, when there exist multiple resources, \cite{talluri1998analysis} show that the advantage of nesting is less clear due to the ambiguity in ranking different customer types.

\textbf{Bid-price control} uses dual variables of DLPs to decide admissions. Bid prices are defined as the Lagrangian multipliers associated with the capacity constraints. A customer is accepted if the price she pays is higher than the estimated value of the requested resource. \cite{talluri1998analysis} provide a comprehensive analysis on the asymptotic optimality of bid-price control, and a comparison of different methods to estimate bid prices.

The quantity-based NRM model in this paper is also related to the online knapsack/secretary problems in \cite{arlotto2018logarithmic} and \cite{arlotto2019uniformly}, the packing/matching problems in \cite{vera2020bayesian}, and the online linear programming (OLP) problems in \cite{agrawal2014dynamic},  \cite{li2019online}, and \cite{li2020simple}. 

\cite{arlotto2019uniformly} study a multi-secretary problem, where the decision-maker selects from a sequence of i.i.d. random variables with known distribution to maximize the expected value of the sum of the selected variables under a given budget constraint. 
\cite{arlotto2019uniformly} develop an adaptive algorithm that makes accept/reject decisions by comparing the ratio between the residual budget and the remaining number of arrivals to certain thresholds, and prove that the algorithm achieves a uniformly bounded regret compared to the optimal offline policy. 

\cite{vera2020bayesian} study an online allocation problem that generalizes a wide range of online problems including multi-secretary, online packing/matching and NRM. They propose an algorithm that resolves the DLP upon each customer arrival and  accepts a customer if the acceptance probability suggested by the DLP is greater than $0.5$. \cite{vera2020bayesian} show that their algorithm achieves $O(1)$ regret under mild assumptions on the customer arrival process. The analysis is based on an innovative ``compensated coupling" technique. The design of their algorithm and their proof idea are generally different from what we present in this paper.

The OLP problem studied in
\citet{li2019online} and \citet{li2020simple}
generalizes the NRM problem as the OLP model does not impose any parametric structure on the distribution of customer types. In addition, the OLP model assumes the revenue and resource units associated with each arriving customer are i.i.d. sampled from an unknown distribution. \citet{li2019online} propose an algorithm that solves approximate dual problems of the DLP at geometric time intervals, and show the algorithm achieves a regret bound of $O(\sqrt{T})$. 
They also propose a resolving heuristic that updates the solution of the dual problem for each customer arrival, and show the resolving algorithm achieves a regret bound of $O(\log{T} \log{\log {T}})$.
Later, \citet{li2020simple} further the study aiming to relax the assumptions about the input data proposed in \citet{li2019online}. \citet{li2020simple} provide an algorithm that does not require solving any LPs, and show that their algorithm achieves a regret bound of $O(\sqrt{T})$ without assumptions that ensure a strong convexity of the dual problem. 

It is also worth mentioning that \cite{agrawal2014fast} study fast algorithms for a class of online convex programming problems. The algorithms in \cite{agrawal2014fast} require solving a DLP once in order to estimate an upper bound on the norm of dual variables. Note that in both our model and the model in \citet{li2020simple}, constant upper bounds on the norm of dual variables can be derived due to a convenient assumption that the constraint capacities are non-negative and scale with the size of the time horizon.

\section{Problem Formulation}\label{sec:setting}

Consider a finite time horizon of $T$ periods. There are $n$ types of customers, indexed by $j\in[n]$. (Throughout the paper, we use $[k]$ to denote the set $\{1, 2, \ldots, k\}$ for any positive integer $k$.) In each period, one customer arrives, and the type of the customer is $j$ with probability $\lambda_j$. We must have $\sum_{j \in [n]} \lambda_j = 1$. 
Let $\Lambda_j(t)$ denote the number of arrivals of type-$j$ customers during periods $1,2,\ldots,t$, for $t \in [T]$,
and $\Lambda_j(t_1, t_2)$ the number of arrivals of type-$j$ customers during periods $t_1, \ldots, t_2 $
for $t_1 < t_2$ and $t_1, t_2 \in [T]$.
There are $m$ resources, indexed by $i\in[m]$. Resource $i$ has initial capacity $C_i > 0$. Let $C=[C_1,\ldots,C_m]^\top$ be the vector of initial capacities. 
Upon the arrival of each customer, we need to make an irrevocable decision on whether to accept or reject the customer. Accepting a customer of type $j$ generates revenue $r_j$ and consumes $a_{ij}$ units of each resource $i$. Let $r =[r_1,\ldots,r_n]^\top$ denote the vector of revenues, $A_j = [a_{1j}, \ldots, a_{mj}]^\top$ the column vector of resource consumption associated with customer type $j$, and $A=[A_1, \ldots, A_n] \in \real^{m \times n}$ the bill-of-materials (BoM) matrix. Rejecting a customer generates no reward and consumes no resource. 
Remaining resources at the end of the horizon have no salvage value. The objective is to maximize the total expected revenue during the entire horizon by deciding whether or not to accept each arriving customer while satisfying all the capacity constraints of the resources.

For an online algorithm $\ALG$, let $x_j^{\ALG}(s_1,s_2)$ denote the number of type-$j$ customers accepted during periods $s_1, \ldots, s_2 $ under the algorithm, for all $j\in[n]$, $s_1, s_2 \in [T]$ and $s_1 < s_2$. 
An online algorithm is \emph{feasible} if it is non-anticipating and satisfies
\[
\sum_{j=1}^n A_j x^\ALG_j(1,T) \leq C, \text{ and } x^\ALG(s_1,s_2) \leq \Lambda_j(s_1,s_2), \; \forall j\in[n], \; \forall s_1, s_2 \in [T], s_1 < s_2. 
\]
The total revenue of ALG is given by
\[ V^{\ALG} := \sum_{j=1}^n r_j x_j^{\ALG}(1,T).\]

\textbf{Hindsight optimum.}
To characterize how close ALG is to the ``best'' algorithm, we compare $V^{\ALG}$ with the \emph{hindsight} optimal revenue. The hindsight optimal revenue is defined as the revenue of an optimal algorithm that knows the arrival information of all customer types a priori.
Let $V^{\HO}$ denote the hindsight optimal revenue. Formally, we have
\begin{align}\label{eq:HO}
\begin{split}
V^{\HO} := & \max_z \sum_{j=1}^n r_j z_j \\
\text{s.t. } & \sum_{j=1}^nA_j z_j \leq C\\
& 0 \leq z_j \leq \Lambda_j(1,T) \quad \forall j \in [n].
\end{split}
\end{align}
Let $\bar{z}_j$ denote an optimal solution to \eqref{eq:HO}. Then $V^{\HO}$ is given by
$V^{\HO} = \sum_{j=1}^n r_j \bar{z}_j$.
It is straightforward that $V^\ALG \leq V^\HO$ because $x_j^{\ALG}(1,T)$ for $j\in[n]$ is always a feasible solution to \eqref{eq:HO}. Note that $V^{\HO}$ and $\bar{z}_j$ for $j\in[n]$ are random variables that depend on $\Lambda_j(1,T)$. The \emph{hindsight optimum} is defined as the expectation of the optimal hindsight revenue, i.e., $\bE[V^{\HO}] = \bE[\sum_{j=1}^n r_j \bar{z}_j]$.



In this paper, we define the \emph{regret} of an algorithm ALG as
\begin{equation}\label{eq:regret}
    \bE[V^{\HO} - V^{\ALG}],
\end{equation}
namely the gap between the expected total revenue of ALG and the hindsight optimum. We focus on analyzing \emph{asymptotic upper bounds} on the regret of online algorithms. We will define the asymptotic regime of our model in Section~\ref{sec:asymptotic}.

\textbf{Deterministic linear program (DLP).}
The DLP formulation is a useful mathematical tool for analyzing regret bounds, and it is obtained by replacing all random variables with their expectations:
\begin{align}
\begin{split} \label{eq:DLP}
V^{\DLP} := & \max_w \sum_{j=1}^n r_j w_j \\
\text{s.t. } & \sum_{j=1}^n A_j w_j \leq C\\
& 0 \leq w_j \leq \lambda_j T, \quad \forall j \in [n] .
 \end{split}
 \end{align}
Let $w_j^*$ for $j\in[n]$ be an optimal solution to \eqref{eq:DLP}. We have $V^{\DLP}=\sum_{j=1}^n r_j w_j^*$.

The constraints in \eqref{eq:DLP} are weaker than those of the hindsight problem since the capacity constraints in \eqref{eq:DLP} only need to be satisfied in expectation. In addition, observe that the expectation $\bE[\bar{z_j}]$ of the hindsight optimal solution $\bar z_j$ is a feasible solution to \eqref{eq:DLP}. It is thus easy to verify that
\begin{equation} \label{eq:vho_vdlp}
\bE[V^{\ALG}] \leq \bE[V^{\HO}] \leq V^{\DLP}.
\end{equation}
We refer the reader to \citet{talluri1998analysis} for a detailed proof of this result.

Equivalently, we can reformulate \eqref{eq:DLP} as
\begin{align}
\begin{split} \label{eq:DLP1}
V^{\DLP} = & \max_u\ T \sum_{j=1}^n r_j u_j \\
\text{s.t. } & \sum_{j=1}^n A_j u_j \leq \frac{C}{T}\\
& 0 \leq u_j \leq \lambda_j , \quad \forall j \in [n] ,
 \end{split}
 \end{align}
 where the decision variable $u_j$ stands for the (fractional) number of type-$j$ customers accepted per period.

\subsection{Asymptotic regime}\label{sec:asymptotic}


Define $\bar T$, $\bar C_1, \bar C_2,\ldots, \bar C_m$ as the \emph{unscaled} horizon length and capacity levels, respectively. Let $k$ denote the scaling factor. In the asymptotic regime, we have $T:= k \cdot \bar T$, and $C_i := k \cdot \bar C_i$ for all $i \in [m]$. By the definition in \citet{reiman2008asymptotically}, an algorithm ALG is optimal on the fluid scale if its regret grows slower than $k$, namely
\[ 
\lim_{k \to \infty} \frac{\bE[V^\HO - V^\ALG]}{k} = 0,
\]
and an algorithm ALG is optimal on the diffusion scale if its regret grows slower than $\sqrt{k}$, namely
\[ 
\lim_{k \to \infty} \frac{\bE[V^\HO - V^\ALG]}{\sqrt{k}} = 0.
\]

We are interested in analyzing the regret bounds of algorithms as $k$ increases. In the rest of the paper, we express the regret bounds in terms of $T$, with the understanding that $T$ is proportional to $C_1,C_2,\ldots,C_m$.





\section{Optimal Algorithm on the Fluid Scale}\label{sec:ff}

In this section, we propose an algorithm that is optimal on the fluid scale (Algorithm~\ref{alg:ff}). The algorithm is a bid-price control heuristic that uses \emph{online gradient descent} (OGD) to update the bid prices of the resources. 



\subsection{Preliminaries in online convex optimization}

We first introduce the \emph{online gradient descent} (OGD) algorithm (see \citet{hazan2019introduction} for a review). In an online convex optimization (OCO) problem, we need to make a sequence
of decisions $x_1, x_2,\ldots$ from a fixed feasible set $\mathcal{K}$. After each decision $x_t$ is chosen in period $t$, it encounters a convex cost function $g_t$. The objective is to minimize the sum of the sequence of convex cost functions. 
In each period $t$, OGD takes a step of size $\eta_t$ from the current point $x_t$ in the negative (sub)gradient direction of the cost function $g_t$ to obtain $x_{t+1}'$, and then project the point to the convex set $\mathcal{K}$ to obtain $x_{t+1}$. We outline the detailed procedure of OGD in Algorithm~\ref{alg:ogd}.
The algorithm's performance is measured by a special notion of regret that is defined as
\begin{equation}\label{eq:regretOGD}
\text{Regret} = \sum_{s=1}^t g_s(x_s)-\min_{x \in \mathcal{K}}\sum_{s=1}^t g_s(x) .    
\end{equation}
Note that the definition of regret in \eqref{eq:regretOGD} for the online convex optimization problem is different from the definition of regret in \eqref{eq:regret} for the NRM problem.

Let $\Vert \cdot \Vert$ denote the $L_2$ norm. Let $D$ denote an upper bound of the diameter of the convex set $\mathcal{K}$ such that $\max_{x,x' \in \mathcal{K}} \| x - x'\| \leq D$. Let $G$ be an upper bound of the maximum gradient of the convex functions $g_s$ at each point $x_s$ such that $ \max_{s\in[t]} \Vert \nabla g_s(x_s) \Vert \leq G$. The following proposition is well-established in the literature.

\begin{proposition} \label{prop:ogd}
\citep{hazan2019introduction} The online gradient descent (OGD) algorithm as shown in Algorithm~\ref{alg:ogd} with step sizes $\eta_s = \frac{D}{G\sqrt{s}}$ satisfies 
\begin{equation}
    \sum_{s=1}^t g_s(x_s)-\min_{x \in \mathcal{K}}\sum_{s=1}^t g_s(x) \leq \frac{3}{2}GD\sqrt{t}.
\end{equation}
\end{proposition}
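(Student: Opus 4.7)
The plan is to follow the standard one-step potential argument for projected online gradient descent, combining convexity of the cost functions with non-expansiveness of Euclidean projection onto $\mathcal{K}$. First I would pick any minimizer $x^\star \in \arg\min_{x\in\mathcal{K}}\sum_{s=1}^t g_s(x)$ and use convexity of each $g_s$ to linearize the regret:
\[
g_s(x_s) - g_s(x^\star) \;\leq\; \nabla g_s(x_s)^\top (x_s - x^\star),
\]
so it suffices to control the right-hand side summed over $s$.

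Next, writing $y_{s+1} := x_s - \eta_s \nabla g_s(x_s)$ so that $x_{s+1}$ is the Euclidean projection of $y_{s+1}$ onto $\mathcal{K}$, I would use non-expansiveness of the projection together with $x^\star \in \mathcal{K}$ to obtain
\[
\|x_{s+1} - x^\star\|^2 \;\leq\; \|y_{s+1}-x^\star\|^2 \;=\; \|x_s-x^\star\|^2 - 2\eta_s \nabla g_s(x_s)^\top(x_s - x^\star) + \eta_s^2\|\nabla g_s(x_s)\|^2.
\]
Rearranging and using $\|\nabla g_s(x_s)\|\leq G$ yields the per-period inequality
\[
\nabla g_s(x_s)^\top (x_s - x^\star) \;\leq\; \frac{\|x_s-x^\star\|^2 - \|x_{s+1}-x^\star\|^2}{2\eta_s} + \frac{\eta_s G^2}{2}.
\]

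Summing over $s=1,\ldots,t$ and processing the first piece by Abel summation (since $\eta_s$ is time-varying one cannot directly telescope), and using that $1/\eta_s$ is non-decreasing in $s$ together with the uniform bound $\|x_s-x^\star\|^2 \leq D^2$, I would obtain a bound of $D^2/(2\eta_t)$. Plugging in $\eta_t = D/(G\sqrt{t})$ gives $\tfrac{1}{2}DG\sqrt{t}$. The second piece contributes $\tfrac{G^2}{2}\sum_{s=1}^t \eta_s = \tfrac{GD}{2}\sum_{s=1}^t s^{-1/2} \leq GD\sqrt{t}$, via the standard integral estimate $\sum_{s=1}^t s^{-1/2} \leq 2\sqrt{t}$. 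Adding the two pieces yields exactly the stated $\tfrac{3}{2}GD\sqrt{t}$.

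The main (admittedly minor) obstacle is the telescoping step with time-varying $\eta_s$. Here I would rewrite
\[
\sum_{s=1}^t \frac{\|x_s-x^\star\|^2 - \|x_{s+1}-x^\star\|^2}{2\eta_s} \;=\; \frac{\|x_1-x^\star\|^2}{2\eta_1} + \sum_{s=2}^t \|x_s-x^\star\|^2\Bigl(\tfrac{1}{2\eta_s} - \tfrac{1}{2\eta_{s-1}}\Bigr) - \frac{\|x_{t+1}-x^\star\|^2}{2\eta_t},
\]
and exploit the non-negativity of the increments $1/\eta_s - 1/\eta_{s-1}$ together with the uniform diameter bound $D^2$ to collapse the whole expression to $D^2/(2\eta_t)$. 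Once this manipulation is carried out, the choice $\eta_s = D/(G\sqrt{s})$ is precisely the one that balances the two resulting $\sqrt{t}$ contributions, so the remainder of the calculation is routine.
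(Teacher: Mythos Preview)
Your proof is correct and follows the standard argument. The paper does not actually give its own proof of this proposition; it is quoted directly from \citet{hazan2019introduction}, and your convexity-plus-projection-nonexpansiveness derivation with Abel summation for the time-varying step sizes is precisely the textbook argument found there.
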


\begin{algorithm}[h]
\caption{\citep{hazan2019introduction} Online Gradient Descent (OGD)}
\begin{algorithmic}[1]\label{alg:ogd}
\footnotesize
\STATE Input: convex set $\mathcal{K}$, time horizon $t$, initial point $x_1 \in \mathcal{K}$, sequence of step sizes $\{\eta_s\}_{s\in[t]}$ 
\FOR{$s = 1,\ldots,t$}
\STATE choose $x_s$; 
\STATE observe convex cost function $g_s:\mathcal{K} \mapsto \R$ and cost $g_s(x_s)$
\STATE update and project:
\begin{align*}
x_{s+1}'&=x_s-\eta_s \nabla g_s(x_s) \\    
x_{s+1}&=\Pi_{\mathcal{K}}(x_{s+1}')
\end{align*}
\ENDFOR
\end{algorithmic}
\end{algorithm}

\subsection{Connection Between OCO and NRM}
In this section, we discuss how to apply OGD to our revenue management model. The connection between the OCO problem and the NRM problem is a specialized bid-price heuristic. Consider the Lagrangian relaxation of DLP \eqref{eq:DLP1}
\begin{align}\label{eq:dualLP}
\begin{split} 
\mathsf{LP}(\theta) = \max_x &\;\; T \sum_{j=1}^n r_j x_j + T \sum_{i=1}^m \theta_i( \frac{C_i}{T} - \sum_{j=1}^n x_j a_{ij}) \\
\text{s.t. } &\;\; 0 \leq x_j \leq \lambda_j, \quad \forall j \in [n] .
\end{split}
\end{align} 

We view $\theta_i$, namely the Lagrangian multiplier for capacity constraint $C_i$, as the bid price for resource $i$. Let $\theta_i^{(t)}$ denote the bid price for resource $i$ at the beginning of period $t$. Let $j(t)$ denote the type of customer arriving in period $t$. The customer in period $t$ is accepted if and only if $r_{j(t)}$ is larger than the aggregated cost $\sum_{i=1}^m \theta_i^{(t)} a_{i,j(t)}$.  

In the corresponding OCO problem, the bid-price vector $(\theta_1^{(t)}, \theta_2^{(t)}, \ldots, \theta_m^{(t)})$ is the decision in period $t$. Once the bid-price vector is chosen for period $t$, the realized cost function is

\begin{equation} \label{eq:gtthetaC}
    g_t(\theta^{(t)}) = \sum_{i=1}^m \theta_i^{(t)} \big( \frac{C_i}{T} - y_t a_{i,j(t)} \big),
\end{equation}
where $y_t$ denotes the indicator that represents the accept/reject action for the customer in period $t$. To be more precise, we have 
\[y_t = \mathbf{1}_{r_{j(t)}>\sum_{i=1}^m \theta_i^{(t)} a_{i,j(t)}}.\]

Note that the definition of $g_t(\cdot)$ is related to the second term of the objective in \eqref{eq:dualLP}. In our analysis, we will use the OGD property to prove bounds on the sum of $g_t(\cdot)$, which then helps to bound the Lagrangian relaxation \eqref{eq:dualLP}.



\subsection{Primal-dual algorithm}
We provide in Algorithm \ref{alg:ff} the detailed procedure of our first primal-dual algorithm. Algorithm \ref{alg:ff} takes as input a start time $t_1 \in \{1,2,\ldots, T\}$, an end time $t_2 \in \{t_1, t_1+1,\ldots,T\}$, and a vector of capacities $B$ at time $t_1$. These input values are used by the algorithm in the next section, which uses Algorithm \ref{alg:ff} as a subroutine. In particular, the end-of-execution time $t_2$ can be earlier than the end of the  horizon $T$. 


For convenience, we denote $L=T-t_1+1$ as the length of the remaining time horizon that scales with $B$. With capacity $B$ and time length $L$, the definition of $g_t(\cdot)$ \eqref{eq:gtthetaC} can be written as
\begin{equation} \label{eq:gtthetaBL}
    g_t(\theta^{(t)}) = \sum_{i=1}^m \theta_i^{(t)} \big( \frac{B_i}{L} - y_t a_{i,j(t)} \big).
\end{equation}


For the OGD steps in Algorithm~\ref{alg:ff}, we set $[0, \bar{\theta}]^m$ to be the convex feasible set of the bid prices (i.e., the convex set $\cal{K}$ used in Algorithm~\ref{alg:ogd}), where $\bar \theta$ is defined as follows.
We first define $ \bar{\alpha_i} := \max_{j\in[n]: a_{ij} \neq 0} \frac{r_j}{a_{ij}}$, which is an upper bound on the revenue that can be achieved from one unit of resource $i$. Let $B_{\max}:=\max_{i\in[m]} B_i$ and $B_{\min}:=\min_{i\in[m]} B_i$ be the maximum and minimum resource capacity, respectively. Then we define $\bar \theta$ as
\begin{equation} \label{eq:alpha}
    \bar{\theta}:=\frac{B_{\max}}{B_{\min}} \sum_{i=1}^m \bar{\alpha_i}.
\end{equation} 

Throughout all of our algorithms, we will ensure that $B_\text{min}>0$.
The definition of $\bar \theta$ is constructed so that $\bar \theta$ serves as an upper bound on the optimal dual variables, which we formally state in  Lemma~\ref{lm:alpha} in the appendix.   
 
In order to apply Proposition \ref{prop:ogd}, we calculate $D$ and $G$ as follows. For any two vectors of bid prices $\theta,\theta' \in [0, \bar{\theta}]^m$, by the Cauchy-Schwarz inequality, we have 
 $\| \theta - \theta'\| \leq \bar{\theta} \sqrt{m}.$
Thus, we use 
\begin{equation} \label{eq:ogd_para_D}
    D = \bar{\theta} \sqrt{m}
\end{equation} as the upper bound on the diameter of region $[0, \bar{\theta}]^m$.

Recall that $G$ is the upper bound of the gradient of function $g_t(\theta)$. Let 
\begin{equation} \label{eq:bara}
\bar{a} = \max_{i\in[m],j\in[n]} a_{ij}
\end{equation}
denote the maximum consumption of any resource from any customer type. Starting from the definition of $g_t(\theta)$ \eqref{eq:gtthetaBL}, we have
\begin{align*}
\Vert \nabla g_t(\theta) \Vert &= \Vert \nabla \sum_{i=1}^m \theta_i \big( \frac{B_i}{L} - y_t a_{i,j(t)} \big) \Vert \\
&=\left\Vert
    \frac{B}{L} - y_t A_{j(t)} \right\Vert
\\&\leq \left\Vert \frac{B}{L} \right\Vert +  \Vert y_t A_{j(t)} \Vert
\\&\leq \frac{B_{\max}}{L}\sqrt{m} + \bar{a}\sqrt{m},
\end{align*}
where the last two inequalities follow from the triangle inequality and the Cauchy-Schwarz inequality, respectively.
Thus, we use 
\begin{equation}\label{eq:ogd_para}
G = \frac{B_{\max}}{L}\sqrt{m} + \bar{a}\sqrt{m}.
\end{equation}

We stress that $\bar \theta$, $D$ and $G$ depend on either $B_{\max}/B_{\min}$ or $B_{\max}/L$. Both of the ratios $B_{\max}/B_{\min}$ and $B_{\max}/L$ become constants when we execute Algorithm~\ref{alg:ff} alone over the entire horizon (i.e., when $B=C$ and $L=T$),  because the values of $C$ and $T$ are proportional to the system size (see Section~\ref{sec:asymptotic}). 

Algorithm~\ref{alg:ff} maintains a vector of remaining resources. It terminates at the end of period $t_2$, or when any resource is not sufficient at the beginning of a period.

\begin{algorithm}[h]
\caption{Primal-Dual Optimal Algorithm on the Fluid Scale ($\FF$)} \label{alg:ff}
\begin{algorithmic}[1]
\footnotesize
\STATE \textbf{Input:} start time $t_1$, end time $t_2$, initial capacity at start time $B \in \R^m$;
\STATE \textbf{Initialize:} $\theta^{(t_1)} \gets 0 \in \real^m$, $B(t_1) \gets B$, $L \gets T-t_1+1$;\\ \hspace{3.5em} 
$B_{\max}=\max_{i\in[m]} B_i$,
$B_{\min}=\min_{i\in[m]} B_i$,
$ \bar{\alpha_i} = \max_{j\in[n]: a_{ij} \neq 0} \frac{r_j}{a_{ij}}$,
$\bar{\theta}=\frac{B_{\max}}{B_{\min}} \sum_{i \in [m]} \bar{\alpha_i}$; \\ \hspace{3.5em} 
$\bar{a} = \max_{i\in[m],j\in[n]} a_{ij}$,
$G \gets \frac{B_{\max}}{L} + \sqrt{m} \bar{a}$, 
$D \gets \bar{\theta} \sqrt{m} $.

\FOR{$t =t_1,t_1+1,\ldots,t_2$}
\STATE Observe customer of type $j(t)$. Set $y_t \gets \mathbf{1} \big( r_{j(t)} > \sum_{i=1}^m \theta_i^{(t)} a_{i,j(t)} \big)$.
\IF{$A_j \leq B(t)$ for all $j\in [n]$}  
\IF{$y_t$ equals to $1$}  
\STATE Accept the customer.
\ELSE
\STATE Reject the customer.
\ENDIF
\STATE Set $B(t+1) \gets B(t) - y_t A_{j(t)}$.
\ELSE
\STATE Break 
\ENDIF
\STATE  Construct function
\[ g_t(\theta) = \sum_{i=1}^m \theta_i \left( \frac{B_i}{L} - y_t a_{i,j(t)} \right) .\]
\STATE Update the dual variables using the OGD procedure (Step 5 of Algorithm~\ref{alg:ogd})
\[ \eta_t \gets \frac{D}{ G\sqrt{t-t_1 + 1} },\]
\[ \theta^{(t+1)} \gets
\theta^{(t)} - \eta_t \nabla_\theta g_t(\theta^{(t)}),\]
\[ \theta_i^{(t+1)}  \gets \min \bigg(\max \big(0, \theta_i^{(t+1)} \big) , \; \bar{\theta} \bigg)
\text{ for all } i \in [m] \]
\ENDFOR
\end{algorithmic}
\end{algorithm}

\subsection{Regret analysis}

In this section, we sketch the key ideas in proving the regret bound of Algorithm~\ref{alg:ff}. For ease of presentation, we use $t_1 = 1$, $t_2 = T$ and $B = C$ in this section, and defer rigorous proofs to the appendix. 

Let $V^{\FF}$ denote the  revenue of Algorithm~\ref{alg:ff} from the $T$ periods. The regret of Algorithm~\ref{alg:ff} can be upper-bounded as
\[ \bE[V^{\HO} - V^{\FF}] =
\bE[V^{\HO}] - V^{\DLP} + V^{\DLP} - \bE[V^{\FF}] \leq V^{\DLP} - \bE[V^{\FF}],\]
where the inequality is due to \eqref{eq:vho_vdlp}.
It thus suffices to prove an upper bound on $V^{\DLP} - \bE[V^\FF]$. 

 We present in Proposition~\ref{prop:ff:regret} a high probability bound for the gap $V^{\DLP} - V^\FF$. 

\begin{proposition} \label{prop:ff:regret}
With probability at least $1-\frac{1}{T}$, we have
\begin{equation}
     V^{\DLP} - V^{\FF} \leq D_2 \sqrt{T} (\log T)^\frac{1}{2} + D_1 \sqrt{T} + D_0,
\end{equation}
where $D_2$, $D_1$ and $D_0$ do not depend on $T$.
\end{proposition}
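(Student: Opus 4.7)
The plan is to pair strong LP duality with the deterministic OGD regret guarantee in Proposition~\ref{prop:ogd}. Applying strong duality to the Lagrangian formulation \eqref{eq:dualLP}, for any $\theta\in\R_+^m$,
\[
V^{\DLP}\ \le\ \LP(\theta)\ =\ \sum_{i=1}^m \theta_i C_i\ +\ T\sum_{j=1}^n \lambda_j\bigl(r_j-A_j^\top\theta\bigr)^+\ =\ T\,\bE_{j\sim\lambda}\bigl[\hat p(j,\theta)\bigr],
\]
where $\hat p(j,\theta):=\max_{y\in\{0,1\}}\bigl\{r_j y+\sum_{i=1}^m \theta_i(C_i/T - y a_{ij})\bigr\}$ is the per-period Lagrangian value under greedy acceptance. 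Since $y_t$ in Algorithm~\ref{alg:ff} is, by construction, the greedy maximizer at $\theta=\theta^{(t)}$, we obtain the identity $\hat p(j(t),\theta^{(t)})=r_{j(t)}y_t+g_t(\theta^{(t)})$, which is the bridge from the OCO cost functions $g_t$ to the per-period revenue of the algorithm.

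Let $\tau\in[1,T]$ denote the (random) stopping time of Algorithm~\ref{alg:ff}. Summing the duality inequality for $t=1,\dots,\tau$ gives $\sum_{t\le\tau}\bE[\hat p(j(t),\theta^{(t)})\mid\mathcal{F}_{t-1}]\ge (\tau/T)V^{\DLP}$. The martingale-difference sequence $Z_t:=\hat p(j(t),\theta^{(t)})-\bE[\hat p(j(t),\theta^{(t)})\mid\mathcal{F}_{t-1}]$ has increments bounded by a constant depending only on $\bar r,\bar\theta,\bar a$ and $\bar C_{\max}/\bar T$, so a maximal Azuma--Hoeffding inequality yields $|\sum_{t\le\tau}Z_t|\le c_1\sqrt{T\log T}$ with probability at least $1-1/T$. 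Combining this with Proposition~\ref{prop:ogd} at the comparator $\theta=0$, which gives $\sum_{t\le\tau}g_t(\theta^{(t)})\le\tfrac{3}{2}GD\sqrt{T}$ deterministically, rearrangement produces
\[
V^{\DLP}-V^{\FF}\ \le\ \Bigl(1-\tfrac{\tau}{T}\Bigr)V^{\DLP}\ +\ \tfrac{3}{2}GD\sqrt{T}\ +\ c_1\sqrt{T\log T}
\]
with probability at least $1-1/T$. When Algorithm~\ref{alg:ff} does not break, $\tau=T$ and the shortfall term $(1-\tau/T)V^{\DLP}$ vanishes, already giving the bound.

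To handle the case of an early break, the plan is to apply Proposition~\ref{prop:ogd} a second time with comparator $\theta=\bar\theta e_i$ (for the resource $i$ whose near-exhaustion triggered the break), combined with a uniform-in-stopping-time martingale concentration on the bounded MDS $\{y_t a_{i,j(t)}-\bE[y_t a_{i,j(t)}\mid\mathcal{F}_{t-1}]\}$. This should yield a high-probability upper bound of the form $\sum_{t\le\tau} y_t a_{i,j(t)}\le C_i\tau/T + c_2\sqrt{T\log T}$. Pairing this with the break-condition lower bound $\sum_{t\le\tau} y_t a_{i,j(t)}> C_i-\bar a$ forces $C_i(1-\tau/T)\le \bar a + c_2\sqrt{T\log T}$, and since $T/C_i$ is a constant in the asymptotic regime of Section~\ref{sec:asymptotic} and $V^{\DLP}/T\le\bar r$, the shortfall $(1-\tau/T)V^{\DLP}$ is $O(1)+O(\sqrt{T\log T})$, which is absorbed into $D_0+D_2\sqrt{T\log T}$.

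The hardest step is this last one. Translating the break condition into a high-probability bound on $T-\tau$ requires a non-trivial comparator choice in Proposition~\ref{prop:ogd} together with a martingale bound that has to hold uniformly over $t\le\tau$; it is also where the extra $\sqrt{\log T}$ factor enters the dominating term. The constants $D_0,D_1,D_2$ then consolidate the problem-dependent quantities $\bar r,\bar a,\bar\theta,G,D$, all of which are $T$-independent in the asymptotic regime.
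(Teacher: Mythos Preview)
Your overall architecture matches the paper's proof: the same martingale on $\hat p(j(t),\theta^{(t)}) - \bE[\hat p\mid\mathcal F_{t-1}]$ (the paper writes it as $M_t = \sum_{s\le t}[\LP(\theta^{(s)})/T - y_s r_{j(s)} - g_s(\theta^{(s)})]$), the same Azuma step, the same OGD regret, and --- crucially --- the same comparator $\bar\theta\,\mathbf e_I$ for the depleted resource. So the plan is essentially right.

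There is, however, a genuine gap in your shortfall step. You propose to combine OGD at $\bar\theta\,\mathbf e_i$ with a martingale on $\{y_t a_{i,j(t)}-\bE[y_t a_{i,j(t)}\mid\mathcal F_{t-1}]\}$ to obtain $\sum_{t\le\tau} y_t a_{i,j(t)}\le C_i\tau/T + c_2\sqrt{T\log T}$. This does not follow. The conditional mean $\bE[y_t a_{i,j(t)}\mid\mathcal F_{t-1}] = \sum_j \lambda_j a_{ij}\,\mathbf 1(r_j>A_j^\top\theta^{(t)})$ is \emph{not} bounded by $C_i/T$ in general (e.g.\ when $\theta^{(t)}\approx 0$ the algorithm accepts nearly everyone and overshoots the per-period budget), so Azuma on this MDS does not center the sum at $C_i\tau/T$. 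And OGD only yields an \emph{upper} bound on $\sum g_t(\theta^{(t)})$, which via $g_t(\bar\theta\,\mathbf e_i)$ translates into a \emph{lower} bound on $\sum y_t a_{i,j(t)}$, not the upper bound you want.

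The correct use of the comparator $\bar\theta\,\mathbf e_I$ --- and what the paper does --- is to plug the OGD inequality
\[
\sum_{t\le\tau} g_t(\theta^{(t)})\ \le\ \bar\theta\Bigl(\tfrac{\tau}{T}C_I-\sum_{t\le\tau} y_t a_{I,j(t)}\Bigr)+\tfrac{3}{2}GD\sqrt{T}
\]
directly into your revenue inequality, then use the break condition as a \emph{lower} bound $\sum_{t\le\tau} y_t a_{I,j(t)}\ge C_I-\bar a$, together with $\bar\theta\,C_I\ge V^{\DLP}$ (the paper's Lemma~\ref{lm:alpha}). The shortfall $(1-\tau/T)V^{\DLP}$ then cancels against $\bar\theta\,C_I(1-\tau/T)$, and you never need an upper bound on cumulative consumption or a second martingale. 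With this reroute your argument goes through and coincides with the paper's.
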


This bound is more informative than many regret bounds in the literature that are shown in expectations, e.g., in \cite{agrawal2016linear}.
We need this high probability bound to prove further properties in the next section.
Similar high probability bounds are also derived in
\cite{agrawal2014fast}.




Our analysis is based on an elegant construction of a martingale that incorporates the Lagrangian relaxation \eqref{eq:dualLP} and the cost function \eqref{eq:gtthetaC} in the OGD procedure.
 Specifically, we show  
 that the stochastic process
\[
M_t := \sum_{s=1}^t \left[ \frac{\LP(\theta^{(s)})}{T} - y_s r_{j(s)} - g_s(\theta^{(s)})\right]
\]
is a martingale, and apply  Azuma's inequality to obtain a high-probability upper bound of $M_t$.

The intuition behind the construction of $M_t$ is as follows:
\begin{itemize}
    \item The first term in the brackets $\LP(\theta^{(s)}) / T$ is related to the DLP by weak duality $\LP(\theta^{(s)}) \geq V^\DLP$.
    \item The second term $y_s r_{j(s)}$ is the revenue of Algorithm~\ref{alg:ff} in period $s$.
    \item For the sum of $g_s(\theta^{(s)})$, we apply Proposition~\ref{prop:ogd}, as well as the trick in \citet{agrawal2014fast}, to upper-bound the loss from OGD and any loss due to lack of resources.
\end{itemize}




The regret bound of Algorithm~\ref{alg:ff} immediately follows from Proposition~\ref{prop:ff:regret}.

\begin{theorem}\label{thm:regret-ff}
The regret bound of Algorithm~\ref{alg:ff} is $O\left( \sqrt{T}(\log T)^{\frac{1}{2}}\right)$.
\end{theorem}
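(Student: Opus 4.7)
The plan is to reduce the regret $\bE[V^{\HO}-V^{\FF}]$ to a bound on $V^{\DLP}-\bE[V^{\FF}]$ and then convert the high-probability statement of Proposition~\ref{prop:ff:regret} into an expectation bound by handling the low-probability failure event with a crude worst-case argument.

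First, I would start from the chain
\[
\bE[V^{\HO}-V^{\FF}] = \bE[V^{\HO}]-V^{\DLP} + V^{\DLP}-\bE[V^{\FF}] \leq V^{\DLP}-\bE[V^{\FF}],
\]
where the last inequality uses $\bE[V^{\HO}]\leq V^{\DLP}$ from \eqref{eq:vho_vdlp}. So it is enough to bound $\bE\!\left[V^{\DLP}-V^{\FF}\right]$ from above.

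Next, let $\mathcal{E}$ denote the event that $V^{\DLP}-V^{\FF}\leq D_2\sqrt{T}(\log T)^{1/2}+D_1\sqrt{T}+D_0$, which by Proposition~\ref{prop:ff:regret} satisfies $\Pr(\mathcal{E})\geq 1-\tfrac{1}{T}$. I would split the expectation into the two events:
\[
\bE\!\left[V^{\DLP}-V^{\FF}\right] = \bE\!\left[(V^{\DLP}-V^{\FF})\mathbf{1}_{\mathcal{E}}\right] + \bE\!\left[(V^{\DLP}-V^{\FF})\mathbf{1}_{\mathcal{E}^c}\right].
\]
On $\mathcal{E}$ the integrand is at most $D_2\sqrt{T}(\log T)^{1/2}+D_1\sqrt{T}+D_0$, giving the desired $O(\sqrt{T}(\log T)^{1/2})$ contribution. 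On $\mathcal{E}^c$ I would use the deterministic upper bound $V^{\DLP}-V^{\FF}\leq V^{\DLP}\leq T\cdot\max_{j\in[n]}r_j$, which holds because $V^{\FF}\geq 0$ and because at most $T$ customers arrive, each contributing revenue at most $\max_j r_j$. Since $\Pr(\mathcal{E}^c)\leq 1/T$, this term is at most $\max_j r_j = O(1)$.

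Combining the two pieces yields
\[
\bE[V^{\HO}-V^{\FF}] \leq D_2\sqrt{T}(\log T)^{1/2}+D_1\sqrt{T}+D_0 + \max_{j\in[n]}r_j = O\!\left(\sqrt{T}(\log T)^{1/2}\right),
\]
which is the claimed regret bound. There is no real obstacle here beyond being careful that the deterministic worst-case bound on the bad event is $O(T)$ (rather than something superlinear), so that multiplying by the failure probability $1/T$ produces only an additive constant and does not dominate the high-probability rate.
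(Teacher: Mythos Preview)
Your proof is correct and follows essentially the same approach as the paper: reduce to $V^{\DLP}-\bE[V^{\FF}]$ via \eqref{eq:vho_vdlp}, then split on the high-probability event from Proposition~\ref{prop:ff:regret} and use a crude $O(T)$ bound on the complement. The only cosmetic difference is that the paper bounds $V^{\DLP}$ by $T\sum_j r_j\lambda_j$ rather than $T\max_j r_j$, but both yield the same $O(1)$ contribution after multiplying by the failure probability $1/T$.
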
  
\begin{proof}{Proof.}
\begin{align}
\bE[V^{\HO} - V^{\FF}] &\leq \nonumber \bE[V^{\DLP} - V^{\FF}]
\\&\leq \frac{1}{T} V^{\DLP} + \left(1-\frac{1}{T} \right)
\left( D_2 \sqrt{T}(\log T)^\frac{1}{2} + D_1 \sqrt{T} + D_0 \right) \nonumber\\
&\leq \frac{1}{T} \left( T \sum_{j=1}^n r_j \lambda_j\right)
+ \left(1-\frac{1}{T} \right) \left( D_2 \sqrt{T}(\log T)^\frac{1}{2} + D_1 \sqrt{T} + D_0 \right) \nonumber \\
&= O\left( \sqrt{T}(\log T)^{\frac{1}{2}}\right).\label{eq:regret:ff}
\end{align}
Above, the first inequality is from Equation \eqref{eq:vho_vdlp}, and the second inequality is from Proposition \ref{prop:ff:regret}. If the high probability event in Proposition \ref{prop:ff:regret} happens, we have $V^{\DLP} - V^{\FF} \leq D_2 \sqrt{T}(\log T)^\frac{1}{2} + D_1 \sqrt{T} + D_0$. Otherwise, we have $V^{\DLP} - V^{\FF} \leq V^{\DLP}$. 
\halmos
\end{proof}

Therefore, Algorithm~\ref{alg:ff} is optimal on the fluid scale.

\section{Optimal Algorithm on the Diffusion Scale}\label{sec:fd}
In this section, we present Algorithm~\ref{alg:fd} that is optimal on the diffusion scale. 
The algorithm calls Algorithm~\ref{alg:ff} as subroutines and furthermore applies a thresholding technique that divides customers into three classes. Algorithm~\ref{alg:fd} then uses a different online decision rule for each customer class.

Compared to the thresholding algorithms in \citet{bumpensanti2020re}, the design of the threshold values in Algorithm~\ref{alg:fd} addresses two additional challenges. First, since DLP solutions are not available, we device a statistical method to calculate the threshold values based on past online information. Second, while the threshold values are based on primal information (i.e., the number of customers accepted), the bid-price vector is a dual solution. In the presence of degeneracy, the primal and dual information may not be complementary. To overcome this difficulty, we make a mild assumption on the DLP's degeneracy, and prove in the next section a proposition that guarantees that the DLP has a unique optimal primal solution (but possibly multiple optimal dual solutions).

\subsection{An Additional High Probability Bound for Algorithm~\ref{alg:ff}}

In this section, we present another high probability bound in  Proposition\ref{prop:ff:x} for Algorithm~\ref{alg:ff}. This bound results from Lemma~\ref{lm:asmp:1} and guarantees that the allocation made by Algorithm~\ref{alg:ff} is close to the unique optimal primal solution of the DLP.

We show Proposition~\ref{prop:ff:x} based on a mild assumption, which is the same as assumption 3 in \cite{agrawal2014dynamic}.
Consider the following LP in the standard form. Given capacity vector $b \in \real^{m+}$, upper bound vector $v=[v_1, \ldots, v_n]^\top$,
decision variables $z = [z_1, \ldots, z_n]^\top$ and $\varepsilon = [\varepsilon_1, \ldots,\varepsilon_m ]^\top$,
we define             
\begin{align}\label{eq:lp_b_u}
\begin{split}
  \LP(b,v) := \max_{z,\varepsilon} \;\;& \sum_{j=1}^n r_j z_j  \\
  \text{s.t. } & \sum_{j=1}^n A_j z_j + \varepsilon = b \\
  & 0 \leq z_j  \leq u_j,\;\; \forall j \in [n]\\
  & \varepsilon_i \geq 0, \;\; \forall i \in [m] .
\end{split}
\end{align}

Note that the hindsight LP shown in \eqref{eq:HO} and the DLP shown in \eqref{eq:DLP} are both special instances of $\LP(b,v)$. Specifically, when $b=C$ and $v_j = \lambda_j T$ for $j\in[n]$, $\LP(b,v)$ is equivalent to \eqref{eq:DLP}.

Let $\bar{A} := [A_1, \ldots, A_n, \mathbf{e}_1, \ldots, \mathbf{e}_m] \in \real^{m\times(m+n)}$ denote the columns of the capacity constraints in $\LP(b,v)$, and $\bar{r} := [r_1,\ldots,r_n, 0, \ldots, 0]^\top \in \real^{m+n}$ the coefficients in the objective.

\begin{assumption}\label{asmp:1}
The problem inputs in \eqref{eq:lp_b_u} are in general position. Namely, for any bid price vector $p \in \real^m$, there can be at most $m$ columns among $l\in[m+n]$ such that $p^\top \bar{A}_l = \bar{r}_l^\top$.
\end{assumption}

\begin{remark} Assumption~\ref{asmp:1} is not necessarily true for all inputs. However, as discussed in \cite{agrawal2014dynamic} and \cite{devanur2009adwords}, one can always randomly perturb the revenue vector $r$ by adding to $r_j$ a random variable $\xi_j$ that follows a uniform distribution over a small interval $[0,\eta]$. In this way, with probability one, no bid-price vector $p$ can satisfy $m+1$ equations simultaneously among $p^\top \bar{A}_l = \bar{r}_l^\top$. The effect of this perturbation on the objective can be made arbitrarily small. 
\end{remark}

Assumption~\ref{asmp:1} implies the following lemma, proved in Appendix~\ref{ap:ff}.

\begin{lemma}\label{lm:asmp:1}
Let $\tilde w^*=[z^*,\varepsilon^*]$ be an optimal primal solution to $\LP(b,v)$ as shown in \eqref{eq:lp_b_u}. 
Under Assumption~\ref{asmp:1}, $\tilde w^*$ is unique. 
In addition, given any feasible solution $\tilde w=[z,\varepsilon]$ of \eqref{eq:lp_b_u} and direction $d:= \frac{\tilde w- \tilde w^*}{\Vert \tilde w - \tilde w^* \Vert} \in \real^{m+n}$, $\bar{r}^\top d$ is guaranteed to be upper-bounded by a constant.
\end{lemma}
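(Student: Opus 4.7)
The plan is to establish the two claims -- uniqueness of $\tilde w^*$ and the upper bound on $\bar r^\top d$ -- in that order, with the error bound exploiting uniqueness.

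For uniqueness, I would argue via complementary slackness at any optimal dual solution $p^*$ of \eqref{eq:lp_b_u}. Complementary slackness forces $\tilde w^*_l$ to a specific bound ($0$ or $v_l$ for the $z$-coordinates, and $0$ for the slack $\varepsilon$-coordinates) whenever $p^{*\top}\bar A_l \neq \bar r_l$; hence the only possibly free coordinates lie in
$\mathcal{J}(p^*) := \{\, l\in[m+n] : p^{*\top}\bar A_l = \bar r_l \,\}$,
whose cardinality is capped by $m$ through Assumption~\ref{asmp:1}. The $m$ equality constraints $\bar A \tilde w = b$ then pin down these free coordinates uniquely provided the submatrix $\bar A_{\mathcal{J}(p^*)}$ has full column rank -- and any column dependence would allow a small perturbation of $p^*$ that strictly enlarges $\mathcal{J}(\cdot)$ beyond $m$, contradicting the general-position hypothesis. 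Therefore $\tilde w^*$ is the unique optimal primal.

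For the bound on $\bar r^\top d$, I would exploit the polytope structure of the feasible region. Since $z$ lies in the box $\prod_j[0,v_j]$ and $\varepsilon$ is determined by $\varepsilon = b - Az \geq 0$, the feasible set in $\real^{m+n}$ is a bounded polytope with finitely many vertices $v^1,\ldots,v^K$. By the first part, exactly one vertex, say $v^1 = \tilde w^*$, is optimal, so $\bar r^\top(v^k - \tilde w^*) < 0$ for every other vertex. Define
\[
\Delta \,:=\, \min_{k\,:\,v^k \neq \tilde w^*}\frac{-\bar r^\top(v^k - \tilde w^*)}{\|v^k - \tilde w^*\|} \;>\; 0,
\]
a minimum over finitely many strictly positive quantities. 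Writing any feasible $\tilde w = \sum_k \alpha_k v^k$ as a convex combination of vertices, one gets
\begin{align*}
\bar r^\top(\tilde w - \tilde w^*)
= \sum_k \alpha_k\, \bar r^\top(v^k - \tilde w^*)
\,\leq\, -\Delta \sum_k \alpha_k \|v^k - \tilde w^*\|
\,\leq\, -\Delta\, \|\tilde w - \tilde w^*\|,
\end{align*}
where the final inequality is the triangle inequality applied to $\tilde w - \tilde w^* = \sum_k \alpha_k(v^k - \tilde w^*)$. Dividing through by $\|\tilde w - \tilde w^*\|$ gives $\bar r^\top d \leq -\Delta$, a constant that is independent of $T$ by scale-invariance of the ratios defining $\Delta$ under the asymptotic regime of Section~\ref{sec:asymptotic}.

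The main obstacle is the linear-algebra step within the uniqueness argument: turning the cardinality cap $|\mathcal{J}(p^*)| \leq m$ into full column rank of $\bar A_{\mathcal{J}(p^*)}$. The delicate point is that Assumption~\ref{asmp:1} constrains the tight-constraint set at \emph{every} $p$, not just at the optimal one, so the perturbation must simultaneously preserve every existing tight condition indexed by $\mathcal{J}(p^*)$ while activating at least one new one -- producing a $p$ on more than $m$ hyperplanes and contradicting the general-position hypothesis. Once this structural step is in place, the remainder of the proof -- complementary slackness, the vertex decomposition, and the triangle inequality -- proceeds cleanly via standard LP and convex-analysis arguments.
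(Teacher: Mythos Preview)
Your overall strategy is sound and close in spirit to the paper's, but the two halves line up differently with what the paper actually does.

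For uniqueness, the paper does not go through complementary slackness at an arbitrary optimal dual. Instead it picks an optimal \emph{basic feasible solution} $\tilde w^*$ with basis $B$, defines the associated price $p=\bar r_B^\top \bar A_B^{-1}$, and observes that by construction the $m$ basic columns already satisfy $p^\top\bar A_l=\bar r_l$; Assumption~\ref{asmp:1} then forces every nonbasic reduced cost to be nonzero, and optimality forces the sign, so every basic direction strictly decreases the objective. Your route via $\mathcal J(p^*)$ for a generic optimal dual needs the full-column-rank step, and your perturbation fix only yields a contradiction when $|\mathcal J(p^*)|=m$ to begin with: if $|\mathcal J(p^*)|<m$, enlarging $\mathcal J$ by one still stays within the cap $m$ and nothing is contradicted. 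The clean way to close this is exactly the paper's move---take $p^*$ to be the price associated with a primal BFS, so that $B\subseteq\mathcal J(p^*)$ forces $|\mathcal J(p^*)|=m$ and hence $\mathcal J(p^*)=B$, making full column rank automatic.

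For the bound on $\bar r^\top d$ you take a genuinely different, and arguably cleaner, route. The paper writes the unit direction $d$ as a nonnegative combination of the basic directions $d_k$ at the optimal BFS and bounds each $\bar r^\top d_k$ by a quantity $\delta_{\min}<0$ that depends only on $\bar A$ and $\bar r$ (a minimum over the finitely many bases), so the constant is independent of $(b,v)$ outright. Your vertex decomposition plus the triangle inequality delivers $\bar r^\top d\le -\Delta$ directly, without having to control the coefficients in any conic combination. The trade-off is that your $\Delta$ is tied to the particular polytope and you must invoke scale-invariance to make it uniform in $T$; the paper's $\delta_{\min}$ is data-only from the start. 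Both suffice for how the lemma is used downstream (all the instances of $\LP(b,v)$ that appear later are scalings of a single base problem).
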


Recall $w^*$ is an optimal solution to DLP \eqref{eq:DLP}. Let $x_j^{\FF} $ be the number of type-$j$ customers accepted by  Algorithm~\ref{alg:ff} with start time $t_1=1$, end time $t_2=T$ and initial capacity $B=C$. 
By Lemma~\ref{lm:asmp:1}, we show the following proposition.



\begin{proposition} \label{prop:ff:x}
Under Assumption~\ref{asmp:1}, with probability at least $1-O(\frac{1}{T})$, we have
\begin{equation}\label{eq:ff:x}
    |x^{\FF}_j - w_j^*| \leq \rho \sqrt{T}(\log T)^\frac{1}{2} 
\end{equation}
for all $j\in[n]$, where $\rho$ does not depend on $T$.
\end{proposition}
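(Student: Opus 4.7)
The plan is to translate the revenue-gap bound from Proposition~\ref{prop:ff:regret} into an allocation-gap bound by using the LP sharpness provided by Lemma~\ref{lm:asmp:1}. Recall that Lemma~\ref{lm:asmp:1} states, under Assumption~\ref{asmp:1}, that $\tilde w^*=[w^*,\varepsilon^*]$ is the \emph{unique} primal optimum of $\LP(C,\lambda T)$, and that $\bar r^\top d$ is bounded (by a negative constant, since $\tilde w^*$ is the unique maximizer) along any feasible unit direction $d=(\tilde w-\tilde w^*)/\|\tilde w-\tilde w^*\|$. I will denote this constant by $-\mu$ with $\mu>0$; note that $\mu$ depends only on $A$, $r$, $\bar C$ and $\lambda$, i.e.\ it does not depend on $T$. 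Rearranging yields the sharpness-type bound
\[
\|\tilde w-\tilde w^*\| \;\leq\; \frac{\bar r^\top \tilde w^* - \bar r^\top \tilde w}{\mu}
\]
for every feasible $\tilde w$ of $\LP(C,\lambda T)$.

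Next I would construct a feasible $\tilde w$ built from the algorithm's output. Let $\tilde x^{\FF}=[x^{\FF},\varepsilon^{\FF}]$ with $\varepsilon^{\FF}:=C-\sum_j A_j x^{\FF}_j$. The capacity slack is nonnegative by feasibility of $\FF$, so the only obstruction to $\tilde x^{\FF}$ being feasible for $\LP(C,\lambda T)$ is that $x^{\FF}_j\le \Lambda_j(1,T)$ rather than $\le \lambda_j T$. By Hoeffding's inequality applied to the i.i.d.\ type indicators, with probability at least $1-O(1/T)$,
\[
\max_{j\in[n]}\bigl|\Lambda_j(1,T)-\lambda_j T\bigr| \;\leq\; c_1\sqrt{T\log T}
\]
for some constant $c_1$. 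On this event, define $\hat x_j:=\min(x^{\FF}_j,\lambda_j T)$ and $\hat\varepsilon:=C-\sum_j A_j \hat x_j\ge \varepsilon^{\FF}\ge 0$. Then $[\hat x,\hat\varepsilon]$ is feasible for $\LP(C,\lambda T)$, and
\[
\bar r^\top[w^*,\varepsilon^*]-\bar r^\top[\hat x,\hat\varepsilon]
\;=\; V^{\DLP} - \sum_{j=1}^n r_j \hat x_j
\;\leq\; \bigl(V^{\DLP}-V^{\FF}\bigr) + \sum_{j=1}^n r_j(x^{\FF}_j-\hat x_j)^+
\;\leq\; \bigl(V^{\DLP}-V^{\FF}\bigr) + c_1\bigl(\textstyle\sum_j r_j\bigr)\sqrt{T\log T}.
\]

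I would then invoke Proposition~\ref{prop:ff:regret}: on the intersection of its high-probability event and the Hoeffding event above (still probability at least $1-O(1/T)$), the right-hand side is $O(\sqrt T(\log T)^{1/2})$. Combining with the sharpness bound yields $\|[\hat x,\hat\varepsilon]-[w^*,\varepsilon^*]\|\le \rho'\sqrt T(\log T)^{1/2}$ for some constant $\rho'$, and in particular $|\hat x_j-w^*_j|\le \rho'\sqrt T(\log T)^{1/2}$ for every $j$. Finally, since $|x^{\FF}_j-\hat x_j|\le (\Lambda_j(1,T)-\lambda_j T)^+\le c_1\sqrt{T\log T}$ by construction, the triangle inequality gives the claimed bound with $\rho:=\rho'+c_1$.

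The main obstacle I expect is verifying that the sharpness constant $\mu$ from Lemma~\ref{lm:asmp:1} is genuinely independent of $T$ and that the constant bound on $\bar r^\top d$ can be taken to be strictly negative (rather than merely finite). This requires using Assumption~\ref{asmp:1} to rule out a continuum of alternative optima and to control the objective slope over the feasible polytope of $\LP(C,\lambda T)$ uniformly as the right-hand sides scale linearly in $T$; the asymptotic regime of Section~\ref{sec:asymptotic} is essential here because only the scaling factor $k$ varies while the per-period data $\bar C$, $\lambda$, $r$, $A$ stay fixed, so the normalized LP and hence $\mu$ are fixed. All remaining steps—the Hoeffding concentration for $\Lambda_j(1,T)$, the projection $\hat x_j=\min(x^{\FF}_j,\lambda_j T)$, and the union bound over the two high-probability events—are routine.
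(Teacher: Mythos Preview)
Your proposal is correct and follows essentially the same route as the paper: the paper proves Proposition~\ref{prop:ff:x} as a special case of Proposition~\ref{prop:ff+:x}, which (via Lemma~\ref{lm:ff+:x}) uses exactly your three ingredients---the LP sharpness from Lemma~\ref{lm:asmp:1} to convert the revenue gap into an $\ell_2$ allocation gap, Hoeffding on $\Lambda_j(1,T)$ to handle $x^{\FF}_j>\lambda_jT$, and Proposition~\ref{prop:ff:regret} for the revenue gap itself. The only cosmetic difference is that the paper projects $x^{\FF}$ onto $S\cap Q$ via the Euclidean projection rather than your coordinate-wise truncation $\hat x_j=\min(x^{\FF}_j,\lambda_jT)$, and your concern about $\mu$ being $T$-independent is resolved in the paper's proof of Lemma~\ref{lm:asmp:1} by observing that the sharpness constant $\delta_{\min}$ depends only on the (finitely many) reduced costs determined by $\bar A$ and $\bar r$, not on the right-hand sides.
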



The proof of Proposition~\ref{prop:ff:x} is based on geometric analysis. The detailed proof is in Appendix~\ref{ap:ff}.

\subsection{Algorithm}

We present Algorithm~\ref{alg:fd} with three technical parameters $\alpha$, $\beta$ and $\gamma$ that satisfy $0 < \alpha < \frac{1}{2}$, $\frac{\alpha}{2} \leq \beta < \frac{1}{2}$ and $ 0 < \gamma < \frac{\alpha}{2}$. In the technical theorems, we will choose special values of $\alpha$, $\beta$ and $\gamma$ for proving regret bounds.

The algorithm defines two special time points $\lceil T^a \rceil$ and $\lceil T-T^b \rceil$ that divide the horizon into three intervals. Here, the values of $a$ and $b$ are defined by the input parameters $\alpha$ and $\beta$. 

For instance, if we run Algorithm~\ref{alg:fd} using $\alpha = 1/3$ and $\beta = 1/3$, then we have $T^a = T^{1/3}$, $T^b= T^{5/6}$ (by Line 2 of Algorithm~\ref{alg:fd}), and Algorithm~\ref{alg:fd} divides the horizon into three intervals: (i) from period $1$ to period $\lceil T^{1/3} \rceil$, (ii) from period $\lceil T^{1/3} \rceil+1$ to period $\lceil T- T^{5/6} \rceil$ and (iii) from period $\lceil T-T^{5/6} \rceil+1$ to period $T$.




Algorithm~\ref{alg:fd} uses a different strategy in each of the three intervals, and thus we call each of the three intervals as a different ``phase'' of Algorithm~\ref{alg:fd}.

\begin{itemize}
    \item In phase I, Algorithm~\ref{alg:fd} runs Algorithm~\ref{alg:ff} as a subroutine. Let $x_j^\text{I}$ denote the number of type-$j$ customers accepted in phase I. At the end of phase I, customer types are divided into three classes based on $x_j^\text{I}$. In particular, if $x_j^\text{I}$ exceeds a certain threshold, then we add type $j$ into an ``accept'' class $\calA$, or if $x_j^\text{I}$ is below another threshold, then we add type $j$ into a ``reject'' class $\calR$. The thresholds are given by the technical parameters $\alpha$ and $\gamma$.

    The purpose of thresholding is to exclude customer types for which concentration inequalities are not effective. Intuitively, when very few customers are accepted (i.e., $x_j^\text{I}$ is small), there is very large (relative) estimation error on the chance of accepting a customer. Similarly, when very few customers are rejected (i.e., $x_j^\text{I}$ is large), there is very large (relative) estimation error on the chance of rejecting a customer. When we are estimating the DLP solution, the thresholding technique helps us focus on the customer types for which concentration inequalities lead to small relative uncertainty intervals. 


    

\item At the beginning of phase II, Algorithm~\ref{alg:fd} sets initial capacity $B(t_1+l_1) = \frac{T-l_1}{T}C$ where $l_1$ denotes the time length of phase I, and $B(t) = [B_1(t), \ldots, B_m(t)]^\top$ denotes the vector of remaining resources at the beginning of period $t$. Algorithm~\ref{alg:fd} also creates a \emph{virtual} copy of vector $B(t)$, denoted as $B'(t)$, and initiates a subroutine of Algorithm~\ref{alg:ff} that updates $B'(t)$.


In each period $t$ of phase II, if both $B(t)$ and $B'(t)$ are sufficient, i.e., $B_i(t) \geq a_{ij}$ and $B_i'(t) \geq a_{ij}$ for all $i\in[m]$ and $j\in[n]$, then when a customer of type $j(t)$ arrives, the subroutine calculates $y_t$ according to the definition of Algorithm~\ref{alg:ff}, namely
\[
y_t = \mathbf{1}\Big(r_{j(t)} > \sum_{i=1}^m \theta^{(t)}_i a_{i,j(t)}\Big),
\]
and Algorithm~\ref{alg:fd} makes admission decision
\begin{align*}
z_t &= \left\{\begin{matrix}
\; 0, & \text{ if $j(t) \in \calR$ }, &\\  
\; 1, & \text{ if $j(t) \in \calA$ }, & \\ 
\; y_t, & \quad \text{ if $j(t) \notin(\calA \cup \calR) $}. &\\ 
\end{matrix}\right. 
\end{align*}
The algorithm then updates capacity vectors 
\[
B(t+1) = B(t) - z_t A_{j(t)},\;\; B'(t+1) = B'(t) - y_t A_{j(t)},
\]
and bid prices $\theta^{(t)}$ according to the subroutine based on $y_t$ (see Line 23 of Algorithm~\ref{alg:fd}).

If $B(t)$ is sufficient, but $B'(t)$ is not sufficient, i.e., there exists $j$ such that $A_j \leq B'(t)$ is violated, then the subroutine of Algorithm~\ref{alg:ff} stops, and Algorithm~\ref{alg:fd} makes admission decision:
\begin{align*}
z_t &= \left\{\begin{matrix}
\; 0, & \text{ if $j(t) \in \calR$ }, &\\  
\; 1, & \text{ if $j(t) \in \calA$ }, & \\ 
\; 0, & \quad \text{ if $j(t) \notin(\calA \cup \calR) $}, &\\ 
\end{matrix}\right. 
\end{align*}
and then updates $B(t+1) = B(t) - z_t A_{j(t)}$.


If $B(t)$ is not sufficient, phase II stops.

The reason we construct the virtual copy of capacity vector $B'(t)$ is because we can then easily apply previous results of Algorithm~\ref{alg:ff} to the decisions $\{y_t\}_t$ made by the subroutine. Notice that the difference between $y_t$ and the real decision $z_t$ is only caused by the customer types in $\cal{R}$ and $\cal{A}$. We will separately analyze those customer types in order to characterize $z_t$.

\item In phase III, Algorithm~\ref{alg:fd} runs Algorithm~\ref{alg:ff} over the rest of the time horizon. For this phase, the main challenge is that the remaining capacity values are no longer proportional to the number of time periods left. As a result, we cannot simply use Proposition~\ref{prop:ff:regret} (which assumes that both the capacity values and the length of the time horizon are proportional to the system size) to obtain the regret bound for phase III. To overcome this difficulty, we show a more general capacity-dependent regret bound for Algorithm~\ref{alg:ff}. In addition, we create a new virtual copy of capacity vector $B''$ for phase III, in which the ratio between the maximum and minimum resource capacities is related to the length of phase III. 
\\
Specifically, for each resource $i$, we set the initial virtual capacity for phase III to 
\[ \max\{ T^{3b/4}, 
\min\{B_i(1+l_1+l_2), \bar{a} T^b\} \},\]
where $B_i(1+l_1+l_2)$ is the real remaining capacity at the beginning of phase III and $T^b$ is the length of phase III. Note that the upper bound $\bar{a} T^b$ is essentially the maximum amount of any resource that can be allocated in phase III.
\\
The subroutine of Algorithm~\ref{alg:ff} runs on the new virtual capacity values. Algorithm\ref{alg:fd} copies the decisions of the subroutine whenever the decision is feasible (i.e., when $B(\cdot)$ is enough for the decision). 
\end{itemize}

\begin{algorithm}[h]
\caption{Primal-Dual Optimal Algorithm on the Diffusion Scale ($\FD$)} \label{alg:fd}
\begin{algorithmic}[1]
\scriptsize
\STATE \textbf{Input:} 
parameters $\alpha$, $\beta$ and $\gamma$ where $0 < \alpha < \frac{1}{2}$,
$\frac{\alpha}{2} \leq \beta < \frac{1}{2}$ and
$ 0 < \gamma < \frac{\alpha}{2}$.

\STATE \textbf{Initialize:}
set phase parameters 
$a = \alpha$, $b=\frac{1}{2}+\beta$, thresholding parameter $c = \frac{1}{2} \alpha + \gamma$, \\
\hspace{1cm} 
set $l_1 \gets \lceil T^{a} \rceil$,
$l_2 \gets  \lceil T - T^{a} - T^{b} \rceil$ ,
$B(t_1) = C$,
set $\bar{a} = \max_{i\in[m],j\in[n]} a_{ij}$,
set $\calA = \{ \; \}$ and $\calR = \{ \; \}$. \\
\hspace{1cm}  (Remark: the algorithm runs all the three phases when $t_2 > t_1 + l_1 + l_2$.)

\STATE \textbf{Phase I.} 
Run Algorithm~\ref{alg:ff} with start time $1$, end time $l_1$, and initial capacity $C\cdot(l_1/L)$.
\STATE \textbf{Phase II.} Thresholding: 
let $x_j$ be the number of type-$j$ customers accepted in phase I.
\FOR{$j \in [n]$}
\IF{$x_{j} < \lambda_{j} T^{c}$}  
\STATE add $j$ to list $\calR$
\ELSIF{$x_{j} > \lambda_{j} (T^{a} - T^{c})$}
\STATE add $j$ to list $\calA$
\ENDIF
\ENDFOR
\STATE Set $B(1+l_1)= C \cdot (T-l_1)/T$, $B'(1+l_1)= C\cdot (T-l_1)/T$.
\STATE Run Algorithm~\ref{alg:ff} Line 2 with start time $1+l_1$, end time $l_1+l_2$, and initial capacity $ C\cdot (T-l_1)/T$.
\FOR{$t = 1+l_1, \ldots, l_1+l_2$}
\STATE Observe customer of type $j(t)$. Set $y_t \gets \mathbf{1} \big( r_{j(t)} > \sum_{i=1}^m \theta_i^{(t)} a_{i,j(t)} \big)$
\IF{$A_j \leq B(t)$ for all $j\in [n]$}  
\IF{$j(t) \in \calR$}  
\STATE Reject the customer. Set $z_t \gets 0$.
\ELSIF{$j(t) \in \calA$}
\STATE Accept the customer. Set $z_t \gets 1$.
\ELSIF{$A_j \leq B'(t)$ for all $j\in [n]$}
\STATE Set $z_t \gets y_t$.
\STATE Set $B'(t+1) \gets B'(t) - y_t A_{j(t)}$. \STATE Run Algorithm~\ref{alg:ff} Line 15 and Line 16. 
\ELSE{}
\STATE Set $y_t \gets 0$, $z_t \gets 0$. 
\ENDIF
\STATE 
Set $B(t+1) \gets B(t) - z_t A_{j(t)}$.
\IF{$z_t = 1$}
\STATE Accept the customer.
\ELSE{}
\STATE Reject the customer.
\ENDIF
\ELSE
\STATE Break
\ENDIF
\ENDFOR
\STATE \textbf{Phase III.}

Set $B''(1+l_1+l_2)=\max\{ T^{3b/4}, 
\min\{B_i(1+l_1+l_2), \bar{a} T^b\}\}$.
\STATE Run Algorithm~\ref{alg:ff} Line 2 with start time $1+l_1+l_2$, end time $T$ and initial capacity $B''(1+l_1+l_2)$
\FOR{$t = 1+l_1+l_2, \ldots, T$}
\STATE Observe customer type $j(t)$. Set $y_t \gets \mathbf{1} \big( r_{j(t)} > \sum_{i=1}^m \theta_i^{(t)} a_{i,j(t)} \big)$
\IF{$A_j \leq B''(t)$ for all $j\in [n]$}
\IF{$A_{j(t)} \leq B(t)$}  
\STATE Set $z_t \gets y_t$.
\ELSE{}
\STATE Set $z_t \gets 0$. 
\ENDIF
\STATE Set $B''(t+1) \gets B''(t) - y_t A_{j(t)}$. \STATE Run Algorithm~\ref{alg:ff} Line 15 and Line 16. 
\ELSE
\STATE $z_t\gets 0$
\ENDIF
\STATE 
Set $B(t+1) \gets B(t) - z_t A_{j(t)}$.
\IF{$z_t = 1$}
\STATE Accept the customer.
\ELSE{}
\STATE Reject the customer.
\ENDIF
\ENDFOR
\end{algorithmic}
\end{algorithm}



\subsection{Regret analysis}

In the following theorem, we provide the regret bound of Algorithm~\ref{alg:fd} for the original NRM problem. The detailed analysis is provided in Appendix~\ref{ap:fd}.

\begin{theorem}\label{thm:fd:regret} 
The regret bound of Algorithm~\ref{alg:fd} is
\[
O(T^{\alpha})+O(Te^{-T^{\varepsilon}})+O(T^{\frac{3}{8}+\frac{3}{4}\beta + \frac{1}{2}\varepsilon} ),
\]
where the parameters $\alpha$, $\beta$, $\gamma$ satisfy the condition $0 < \alpha < \frac{1}{2}$, $\frac{\alpha}{2} \leq \beta < \frac{1}{2}$, $ 0 < \gamma < \frac{\alpha}{2}$, and $\varepsilon = \frac{\log(\log T)}{ \log T}$

If we let the parameters be
\[
\alpha= \frac{3 \log(\log T)}{ 2 \log T}, \;
\beta = \frac{\log(\log T)}{ \log T}, \;
\gamma =  \frac{2\log(\log T)}{3 \log T} .
\]
The algorithm is optimal on the diffusion scale. Specifically, we have
\[ \bE[V^{\HO} - V^{\FD}] = O\left(T^{\frac{3}{8}}( \log T)^{\frac{5}{4}} \right) . \]
\end{theorem}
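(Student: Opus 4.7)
The plan is to decompose the regret $\bE[V^{\HO}-V^{\FD}]$ additively over the three phases of Algorithm~\ref{alg:fd}, plus a residual ``bad event'' term collecting the failure probabilities of all concentration arguments, and then tune the parameters $\alpha,\beta,\gamma$ to balance the resulting terms. Since $\bE[V^{\HO}]\le V^{\DLP}$ by \eqref{eq:vho_vdlp}, it suffices to compare $V^{\DLP}$ with $\bE[V^{\FD}]$. Phase~I is handled by a trivial length argument: it lasts $l_1=\lceil T^\alpha\rceil$ periods and each period contributes at most $O(1)$ to the regret, yielding the $O(T^\alpha)$ term in the claim.

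The heart of the argument lies in Phase~II, where the thresholding device is novel. First I would apply Proposition~\ref{prop:ff:x} to the Phase~I subroutine, viewed as a self-contained execution of Algorithm~\ref{alg:ff} on horizon $l_1$ with proportionally scaled capacity $C\cdot l_1/T$, to establish that with probability $1-O(1/l_1)$ every acceptance count satisfies $|x_j^{\text{I}} - w_j^*\cdot l_1/T| = O(T^{\alpha/2}(\log T)^{1/2})$ for all $j\in[n]$. Under Assumption~\ref{asmp:1}, Lemma~\ref{lm:asmp:1} makes $w^*$ the unique optimal primal of the DLP, and by basic-feasible-solution structure at most $m$ coordinates are fractional; the rest satisfy $w_j^*\in\{0,\lambda_j T\}$. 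With $c=\alpha/2+\gamma$, the threshold margin $\lambda_j T^c$ dominates the deviation $T^{\alpha/2}(\log T)^{1/2}$ for large $T$, so on this high-probability event the classification is exact: $\calA=\{j:w_j^*=\lambda_j T\}$, $\calR=\{j:w_j^*=0\}$, and every fractional $j$ falls outside both lists. Conditional on correct classification, the virtual process $B'(t)$ runs an unmodified execution of Algorithm~\ref{alg:ff} on horizon $l_2$, whose regret is bounded by Proposition~\ref{prop:ff:regret}. The residual task is to control the drift $B(t)-B'(t)$ caused by overriding OGD on $\calA\cup\calR$, which, since each override matches the DLP's deterministic action, is a zero-mean martingale controlled by Azuma's inequality; all failure events here fold into the $O(Te^{-T^\varepsilon})$ residual.

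Phase~III is the main obstacle because, once it begins, the remaining capacities are no longer proportional to the phase length $T^b$, and the constants $\bar\theta$, $D$, $G$ of Algorithm~\ref{alg:ff} in \eqref{eq:alpha}--\eqref{eq:ogd_para} depend sensitively on $B_{\max}/B_{\min}$. The virtual capacity $B''_i=\max\{T^{3b/4},\,\min\{B_i(1+l_1+l_2),\,\bar a T^b\}\}$ is engineered precisely to force $B''_{\max}/B''_{\min}\le T^{b/4}$, giving $\bar\theta=O(T^{b/4})$, $D=O(T^{b/4})$, and $G=O(1)$. The key technical step is to prove a capacity-dependent generalization of Proposition~\ref{prop:ff:regret} accommodating these inflated constants, yielding a Phase~III regret of $O(GD\sqrt{T^b\log T})=O(T^{3b/4}(\log T)^{1/2})$. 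Clipping at the upper bound $\bar a T^b$ is cost-free, because no more than $\bar a T^b$ units of any single resource can possibly be consumed in $T^b$ periods; clipping at the lower bound $T^{3b/4}$ contributes nontrivially only when the true remaining capacity falls below $T^{3b/4}$, which is again a rare event absorbed by $O(Te^{-T^\varepsilon})$.

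Combining the three contributions yields the decomposition
\[
\bE[V^{\HO}-V^{\FD}] = O(T^\alpha) + O(Te^{-T^\varepsilon}) + O(T^{3/8+3\beta/4+\varepsilon/2}),
\]
which is exactly the first claim. The prescribed substitutions $\alpha=3\log\log T/(2\log T)$, $\beta=\log\log T/\log T$, $\gamma=2\log\log T/(3\log T)$ make $T^\alpha=(\log T)^{3/2}$, $Te^{-T^\varepsilon}=O(1)$ (since $T^\varepsilon=\log T$), and $T^{3/8+3\beta/4+\varepsilon/2}=T^{3/8}(\log T)^{5/4}$, so the third term dominates and gives diffusion-scale optimality. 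The hardest step will be the Phase~II coupling: simultaneously controlling the OGD bid-price trajectory, the classification-failure probability, and the capacity drift $B(t)-B'(t)$ with exponents that combine consistently in the final bound, for which uniqueness of the DLP primal (Lemma~\ref{lm:asmp:1}) is indispensable.
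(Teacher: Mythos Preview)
Your proposal contains a fundamental gap that prevents the argument from reaching $o(\sqrt{T})$. You open by passing from $V^{\HO}$ to $V^{\DLP}$ via \eqref{eq:vho_vdlp}, but this step alone can cost $\Theta(\sqrt{T})$: take a single resource, two types with $r_1=2,r_2=1,a_{11}=a_{12}=1,\lambda_1=\lambda_2=1/2$, and $C=T/2$. Assumption~\ref{asmp:1} holds, yet $V^{\DLP}-\bE[V^{\HO}]=\bE[(\Lambda_1-T/2)^+]=\Theta(\sqrt{T})$. So any argument that bounds $V^{\DLP}-\bE[V^{\FD}]$ instead of $\bE[V^{\HO}-V^{\FD}]$ is dead on arrival for diffusion-scale optimality. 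This is exactly what your Phase~II treatment does: invoking Proposition~\ref{prop:ff:regret} on the virtual process $B'$ yields a Phase~II contribution of order $\sqrt{l_2}(\log T)^{1/2}=\Theta(\sqrt{T}(\log T)^{1/2})$, which swamps the claimed $O(Te^{-T^\varepsilon})$. Moreover, the drift $B(t)-B'(t)$ is \emph{not} a zero-mean martingale: on $\calA$ the increment $(z_t-y_t)A_{j(t)}=(1-y_t)A_{j(t)}$ is nonnegative, and on $\calR$ it is $-y_t A_{j(t)}\le 0$; neither has conditional mean zero because the OGD decision $y_t$ does not coincide with the DLP action.

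The paper avoids this by never passing to $V^{\DLP}$. Instead it introduces two auxiliary policies $\AuxHO$-I and $\AuxHO$-II that follow Algorithm~\ref{alg:fd} through Phase~I (respectively, Phases~I--II) and then play the \emph{hindsight} optimum of $\HO(t_1')$ (respectively, $\HO(t_2')$) on the remainder; the regret telescopes as $\bE[V^{\HO}-V^{\AuxHO\text{-I}}]+\bE[V^{\AuxHO\text{-I}}-V^{\AuxHO\text{-II}}]+\bE[V^{\AuxHO\text{-II}}-V^{\FD}]$. The middle term is the crux: Proposition~\ref{prop:ff:x} and Lemma~\ref{lm:fd:p2:x} are combined to show that the event
\[
E=\bigcap_{j\in[n]}\bigl\{\bar{x}_j(t_1',t_2'-1)\le \bar{z}_j(t_1')\le \bar{x}_j(t_1',t_2'-1)+\Lambda_j(t_2',T)\bigr\}
\]
holds with probability $1-O(e^{-T^\varepsilon})$, and on $E$ the Phase~II acceptances are a sub-allocation of the hindsight optimal $\bar{z}(t_1')$ that can be \emph{completed} in Phase~III, so the middle term is exactly zero on $E$. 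The thresholding buys this ``sandwich'' for the $\calA$ and $\calR$ types; it is not a device for reducing the DLP-gap of Phase~II. A secondary point: in Phase~III the lower clip at $T^{3b/4}$ is not handled by a rare-event argument---the true capacity can legitimately fall below $T^{3b/4}$---but rather by the deterministic bound $\tilde v^{\FD}\le v^{\FD}+T^{3b/4}\cdot m\cdot\max_i\bar\alpha_i$, which is $O(T^{3b/4})$ and already absorbed in the third term.
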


We outline the key idea of the proof here. Let HO$(t)$ denote the hindsight LP for periods from $t$ to $T$. Formally, HO$(t)$ is given by
\begin{align}\label{eq:HO_t}
\begin{split}
V^{\text{HO}}(t) := \max_z & \sum_{j=1}^n r_j z_j \\
\text{s.t.} \; & \sum_{j=1}^n A_j z_j \leq B(t),\\
& 0 \leq z_j \leq \Lambda(t,T), \; \forall j \in [n].
\end{split}
\end{align}
Let $\bar{z}_j(t)$ for $j\in[n]$ denote an optimal solution to $\HO(t)$. We have $V^{\HO}(t) = \sum_{j\in[n]} r_j \bar{z}_j(t) $.

Let $t_1':= l_1+1$ be the starting time period of phase II, $t_2'=l_1+l_2+1$ the starting time period of phase III, where $l_1=\lceil T^a \rceil$, $l_2=\lceil T-T^a-T^b \rceil$ are the lengths of phase I and phase II, respectively.

To analyze the performance of  Algorithm~\ref{alg:fd}, we define an auxiliary algorithm $\AuxHO$-I that follows Algorithm~\ref{alg:fd} in phase I, and then uses the hindsight optimal decisions, namely the optimal solution $\bar{z}(t_1')$ to $\HO(t_1')$, in the remaining time periods. Similarly, we define an auxiliary algorithm $\AuxHO$-II that follows Algorithm~\ref{alg:fd} in phase I and phase II, and then uses the hindsight optimal decisions, namely, the optimal solution $\bar{z}(t_2')$ to $\HO(t_2')$, in the remaining time periods.
Let $V^{\AuxHO\text{-}\I}$, $V^{\AuxHO\text{-}\II}$ denote the revenue of the auxiliary algorithms $\AuxHO$-I and $\AuxHO$-II, respectively.
By definition, we decompose the regret of Algorithm~\ref{alg:fd} as
\begin{align}\label{eq:decompose1}
    \bE[V^{\HO} - V^{\FD}] &=
    \bE[V^{\HO} - V^{\AuxHO\text{-I}}] 
    + \bE[V^{\AuxHO\text{-I}} - {V}^{\AuxHO\text{-II}}]
    + \bE[{V}^{\AuxHO\text{-II}} - V^{\FD}].
\end{align}
Our proof of Theorem~\ref{thm:fd:regret} depends on the following proposition, which is proved in Appendix~\ref{ap:fd}.

\begin{proposition} \label{prop:fd:regret}
Consider Algorithm~\ref{alg:fd} with parameters $\alpha, \beta, \gamma$ that satisfy $0<\alpha < \frac{1}{2}$,
$\frac{\alpha}{2} \leq \beta < \frac{1}{2}$ and $0 < \gamma < \frac{\alpha}{2}$.
The algorithm calculates $a = \alpha$, $b = \frac{1}{2}+\beta$, $c = \frac{\alpha}{2} + \gamma$.

Under Assumption~\ref{asmp:1}, given $\varepsilon > 0$ such that $ 0 < \varepsilon < 2\gamma$, $\varepsilon\leq \beta$ and $\varepsilon \geq \frac{\log (\log T)}{\log T}$, we have
\begin{align*}
    (1). & \;\; \bE[V^{\HO} - V^{\AuxHO\text{-\normalfont{I}}}] = O(T^a) ;\\
    (2). & \;\; \bE[V^{\AuxHO\text{-\normalfont{I}}} - {V}^{\AuxHO\text{-\normalfont{II}}}]
    = O( T e^{-T^{\varepsilon}}) ;\\
    (3). & \;\;  \bE[V^{\AuxHO\text{-\normalfont{II}}} - V^{\FD}]
    = O\big(T^{\frac{3}{4}b + \frac{\varepsilon}{2}} \big).
\end{align*}
\end{proposition}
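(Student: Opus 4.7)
My plan is to handle the three bounds separately, since each corresponds cleanly to one of the three phases of Algorithm~\ref{alg:fd}.

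For part~(1), note that the two algorithms agree from $t_1'$ onwards (both run $\HO(t_1')$), so the only source of difference is how phase~I is played. I would invoke a dynamic-programming-style upper bound: for any feasible phase-I allocation $\hat{x}$,
\[
V^{\HO} \le \sum_{j} r_j \hat{x}_j + V^{\HO}\bigl(t_1';\, C - \sum_{j} A_j \hat{x}_j\bigr),
\]
while $V^{\AuxHO\text{-}\I}$ equals exactly the phase-I revenue of $\FD$ plus $V^{\HO}(t_1')$ with the residual capacity left by $\FD$. The phase-I revenue difference is at most $r_{\max} l_1 = O(T^a)$, and the gap between the two hindsight values is bounded by a Lipschitz-in-capacity argument: an optimal dual of $\HO(t_1')$ is bounded by $\bar\theta$ from Lemma~\ref{lm:alpha}, so a capacity perturbation of $O(T^a)$ changes the hindsight optimum by $O(T^a)$. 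Summing gives the stated bound.

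For part~(3), the two algorithms agree through phase~II and only differ on phase~III, whose length is $L := T^b$. On phase~III, $\FD$ runs Algorithm~\ref{alg:ff} with the virtual capacity $B''$ satisfying $T^{3b/4} \le B''_{\min} \le B''_{\max} \le \bar{a}T^b$. I would apply a capacity-dependent version of Proposition~\ref{prop:ff:regret}: from \eqref{eq:alpha}, \eqref{eq:ogd_para_D} and \eqref{eq:ogd_para}, $\bar\theta = O(B''_{\max}/B''_{\min}) = O(T^{b/4})$, hence $D = O(T^{b/4})$, $G = O(1)$, and the OGD-based regret term is $GD\sqrt{L} = O(T^{3b/4})$. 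A small additional error appears because the virtual $B''$ replaces the true residual $B(t_2')$; however, one can show that with probability $1 - O(e^{-T^{\varepsilon}})$, the true remaining capacity at the start of phase~III lies in the safe interval $[T^{3b/4},\bar{a}T^b]$ (the expected residual is of order $T^b$ and the fluctuations are $O(\sqrt{T})$, which is far smaller than $T^{3b/4}$ when $b > 2/3$; a direct Azuma argument yields the exponential tail). On the bad event the loss is $O(Te^{-T^{\varepsilon}}) = O(T^{3b/4 + \varepsilon/2})$, so the combined bound is $O(T^{3b/4 + \varepsilon/2})$.

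Part~(2) is the crux and the main obstacle. The two algorithms share the same history through $t_1'$, so the question is how much revenue $\FD$ loses on $[t_1', t_2'-1]$ relative to hindsight. The thresholding sets $\calA$ and $\calR$ using the phase-I counts $x_j^{\text{I}}$ at a margin of $\lambda_j T^c$ with $c = a/2 + \gamma$. By Proposition~\ref{prop:ff:x} applied to the phase-I subroutine (of length $l_1 = T^a$), we have $|x_j^{\text{I}} - \lambda_j T^a u_j^*/\lambda_j| = O(\sqrt{T^a}\log^{1/2} T) = O(T^{a/2} \log^{1/2} T)$ with probability $1 - O(1/l_1)$; since $\gamma > 0$, this gap is strictly smaller than the threshold margin $T^c$. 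Combining this with a Chernoff/Azuma bound on the arrival counts $\Lambda_j(1,l_1)$ — which concentrate around $\lambda_j T^a$ with exponential tails $e^{-\Theta(T^{2\gamma})}$ using $\varepsilon < 2\gamma$ — I would show that with probability $1 - e^{-T^{\varepsilon}}$ the classification into $\calA, \calR$, and ``boundary'' matches the complementary-slackness profile of the DLP (Assumption~\ref{asmp:1} and Lemma~\ref{lm:asmp:1} ensure uniqueness of this profile). Under this good event, the admissions prescribed in phase~II by $\FD$ coincide up to $o(T)$ with those of any optimal primal solution, so the gap to the hindsight value is dominated by an $O(\sqrt{l_2}\log^{1/2} T)$ residual (from Proposition~\ref{prop:ff:regret} on phase~II), which is absorbed in the other bounds; on the bad event, the total loss is trivially at most the revenue $O(T)$, giving $O(Te^{-T^{\varepsilon}})$.

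The hardest step will be making the classification argument in part~(2) fully rigorous: the membership of each type in $\calA, \calR$ or the boundary is a function of the random path, and the bid-price evolution in phase~II is coupled to past decisions, so we need to decouple the primal concentration (used for classification) from the dual analysis (used for boundary types) while ensuring both hold on the same $1 - e^{-T^{\varepsilon}}$ event. Lemma~\ref{lm:asmp:1} will be instrumental here, converting primal proximity to $w^*$ into a meaningful decision-level match between $\FD$ and the hindsight optimum.
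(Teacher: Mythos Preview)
Your overall decomposition matches the paper's, but there is a real gap in part~(2), and a smaller issue in part~(3).

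For part~(2), the target bound $O(T e^{-T^{\varepsilon}})$ forces you to show that on the good event $V^{\AuxHO\text{-I}} - V^{\AuxHO\text{-II}} = 0$, not merely $O(\sqrt{l_2}\log^{1/2}T)$. Your argument only says the phase-II admissions are close to the DLP solution, which leaves a residual of order $\sqrt{T}$ on the good event---far larger than $T e^{-T^{\varepsilon}}$, so it cannot ``be absorbed in the other bounds'' for the purpose of establishing~(2) as stated. The missing idea is a \emph{sandwiching} inequality: the paper defines a good event $E$ (of probability $1-O(e^{-T^{\varepsilon}})$) on which the phase-II acceptances satisfy
\[
\bar{x}_j(t_1',t_2'-1) \;\le\; \bar{z}_j(t_1') \;\le\; \bar{x}_j(t_1',t_2'-1) + \Lambda_j(t_2',T)
\qquad\text{for every }j.
\]
This means the hindsight optimum $\bar{z}(t_1')$ can be \emph{exactly} completed in phase~III, so $V^{\AuxHO\text{-II}}=V^{\AuxHO\text{-I}}$ on $E$. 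Obtaining these two-sided inequalities requires more than correct classification: you need (i) the Reiman--Wang perturbation bound (Lemma~\ref{lm:fd:p2:x}) placing $\bar{z}_j(t_1')$ within $\Gamma(t_1',T)$ of $(T-l_1)x_j^*$, and (ii) the generalized Proposition~\ref{prop:ff:x} applied with $K=e^{T^{\varepsilon}}$ (not just $K=T$) to both the phase-I and phase-II subroutines, followed by a case analysis over $x_j^*\in\{0\}\cup(0,\lambda_j)\cup\{\lambda_j\}$. Your sketch invokes neither ingredient.

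For part~(3), you assert that with probability $1-O(e^{-T^{\varepsilon}})$ the true residual $B(t_2')$ lies in $[T^{3b/4},\bar a T^b]$; this is unnecessary, and the parenthetical condition $b>2/3$ is not guaranteed by the hypotheses. The paper handles the mismatch deterministically: since at most $\bar a T^b$ units of any resource can be consumed over a horizon of length $T^b$, the inflated DLP value $\tilde V^{\DLP}(t_2')$ upper-bounds $V^{\DLP}(t_2')$ regardless of the true residual, and the inflation of small capacities to $T^{3b/4}$ costs at most $m\max_i\bar\alpha_i\cdot T^{3b/4}$ in realised revenue. One application of the capacity-dependent Proposition~\ref{prop:ff:regret} (with $C_{\max}/C_{\min}=T^{b/4}$ and $K=e^{T^{\varepsilon}}$) then finishes, with no high-probability claim about $B(t_2')$ needed.
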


In fact, each item in Proposition~\ref{prop:fd:regret} describes a regret bound of Algorithm~\ref{alg:fd} relative to the hindsight optimum $\HO$, $\HO(t_1')$ and $\HO(t_2')$ in each phase. Intuitively, for phase I, $V^{\HO} - V^{\AuxHO\text{-\normalfont{I}}} $ is upper bounded by $V^{\HO} - V^{\HO}(t_1')$, which is of order $O(l_1)=O(T^a)$.

Let $\DLP(t)$ denote the DLP for periods from $t$ to $T$, given by 
\begin{align}\label{eq:DLP_t}
\begin{split}
V^{\text{DLP}}(t) = \max_x\; & (T-t+1) \sum_{j=1}^n r_j x_j \\
\text{s.t.} \; & \sum_{j=1}^n A_j x_j \leq \frac{B(t)}{T-t+1},\\
& 0 \leq x_j \leq \lambda_j, \; \forall j \in [n].
\end{split}
\end{align}

Let $x_j^*(t)$ for $j\in[n]$ denote an optimal solution to $\DLP(t)$, and $x_j^*$ for $j\in[n]$ the optimal solution to the $\DLP$ in \eqref{eq:DLP1}. 
By definition of Algorithm~\ref{alg:fd} (Line 12), we have $B(t_1')/(T-l_1)=C/T$, and hence $x_j^*(t_1')$ is also an optimal solution to \eqref{eq:DLP1}. Under Assumption~\ref{asmp:1}, we know by Lemma~\ref{lm:asmp:1} that $x_j^*(t)$ is unique (since \eqref{eq:DLP1} is a special case of $\LP(b,v)$). Thus, we have $x_j^*(t_1') = x_j^*$ for all $j\in[n]$.


Let $x_j^{\FD}(s_1,s_2)$ denote the number of type $j$ customers accepted by Algorithm~\ref{alg:fd} during periods $ s_1,\ldots, s_2$.
By Proposition~\ref{prop:ff:x}, we know $x_j^{\FD}(t_1', t_2'-1)$ for $j\notin(\calA \cup \calR) $ is close to $l_2\, x_j^*(t_1')= l_2 x_j^*$ with high probability, since the admission decision for customer types $j\notin(\calA \cup \calR)$ follows that of Algorithm~\ref{alg:ff}. 

In addition, we know $x_j^{\FD}(t_1',t_2'-1)=0$ for $j\in \calR$, and $x_j^{\FD}(t_1',t_2'-1)=1$ for $j\in \calA$.
We show in Lemma~\ref{lm:fd:p2:x} that the hindsight optimal solution $\bar{z}_j(t_1')$ is close to $(T-l_1) x_j^*$ with high probability. Combining these results with the thresholding conditions with respect to $x_j^{\FD}(1, t_1'-1)$, we show in Lemma~\ref{lm:fd:p2:regret} that $x_j^{\FD}(t_1',t_2'-1)$ satisfies
\begin{equation}
x_j^{\FD}(t_1',t_2'-1) \leq 
\bar{z}_j(t_1') \leq
x_j^{\FD}(t_1',t_2'-1) + \Lambda(t_2',T)
\end{equation}
for all customer types with high probability. This implies that the decision-maker is able to achieve the hindsight optimum of $\HO(t_1')$ if she follows the decision of Algorithm~\ref{alg:fd} during period $t_1',\ldots,t_2'-1$, and then accepts $\bar{z}_j(t_1') - x_j^{\FD}(t_1',t_2'-1)$ type-$j$ customers during periods $t_2',\ldots,T$.
Thus, the regret for phase II is small.

The result for phase III follows from a more general version of Proposition~\ref{prop:ff:regret}. 
In particular, phase III corresponds to an NRM problem with initial capacity $B(t_2')$ and horizon length $T^b$. It differs from the original NRM model in that the capacity values are not proportional to the length of the time horizon. Thus, we must prove a capacity-dependent regret bound of Algorithm~\ref{alg:ff}. The result for phase III directly follows from that capacity-dependent regret bound.

Now we complete the proof of Theorem~\ref{thm:fd:regret} by setting the technical parameters in Algorithm~\ref{alg:fd} as follows:
$$\alpha= \frac{3 \log(\log T)}{ 2 \log T}, \;
\beta = \frac{\log(\log T)}{ \log T}, \;
\gamma =  \frac{2\log(\log T)}{3 \log T} .$$

Note that the parameters $\alpha$, $\beta$, $\gamma$ satisfy the condition $0 < \alpha < \frac{1}{2}$, $\frac{\alpha}{2} \leq \beta < \frac{1}{2}$ and $ 0 < \gamma < \frac{\alpha}{2}$ when $T$ is large enough, e.g., when $T \geq 100$. 
Given $\varepsilon = \frac{\log(\log T)}{ \log T}$, we apply Proposition~\ref{prop:fd:regret} and get 
\[O(T^a) = O(T^{\alpha}) = O((\log T)^{\frac{3}{2}}) , \]
\[O(Te^{-T^{\varepsilon}})=O(T\cdot \frac{1}{T}) = O(1) , \]
\[O(T^{\frac{3}{4}b + \frac{\varepsilon}{2}} ) = O(T^{\frac{3}{8}} T^{\frac{3\beta}{4}+\frac{\varepsilon}{2}}) = O(T^{\frac{3}{8}}( \log T)^{\frac{5}{4}}) . \]

Combining the calculations above with \eqref{eq:decompose1} and the results in Proposition~\ref{prop:fd:regret}, we know the regret bound of Algorithm~\ref{alg:fd} is $O(T^{\frac{3}{8}}( \log T)^{\frac{5}{4}}) $, which shows Algorithm~\ref{alg:fd} is optimal on the diffusion scale.

\section{Numerical Experiments}\label{sec:numerical}
In this section, we show the numerical performance of Algorithm~\ref{alg:ff} and Algorithm~\ref{alg:fd}. In addition, we introduce several practical algorithms based on the structure of Algorithm~\ref{alg:ff} and Algorithm~\ref{alg:fd}. We first introduce the new algorithms. 

We propose a new algorithm (Algorithm~\ref{alg:fr}) that restarts Algorithm~\ref{alg:fd} at a sequence of specific time points of the horizon. As an extension of  Algorithm~\ref{alg:fr}, we propose a hybrid algorithm (Algorithm~\ref{alg:mr}) that is allowed to solve DLPs for a limited number of times. We analyze the performance of Algorithm~\ref{alg:mr} to demonstrate how solving DLPs helps further reduce the regret bound. The pseudocode and detailed description of Algorithm~\ref{alg:fr} and Algorithm~\ref{alg:mr} can be found in Appendix \ref{ap:resolve}.

We also consider variants of Algorithm~\ref{alg:fr} and Algorithm~\ref{alg:mr}. Both algorithms include a total of $S+1$ \emph{epochs}. 
Each epoch $u$, where $u=0,1,\ldots, S$, starts from period $T-\tau_{u}+1$ and ends at period  $T-\tau_{u+1}$, where $\tau_u$ is the time length between the starting period of epoch $u$ and period $T$. Next, we introduce Algorithm 7 and Algorithm 8 that \emph{warm-start} each epoch $u$ by reusing the bid prices of epoch $u-1$ for $1\leq u\leq S$. Algorithm 7 and Algorithm 8 are variants of Algorithm~\ref{alg:fr} and Algorithm~\ref{alg:mr}, respectively.

For Algorithm~\ref{alg:fd}, we use the input parameters as shown in Theorem~\ref{thm:fd:regret}. 
For Algorithm~\ref{alg:mr}, we experiment with different values of $U$, namely the number of epochs that require solving DLPs. 
For the remaining algorithms, we tune the input parameters to achieve a good algorithm performance.


\subsection{Single Resource}
We first consider an NRM problem with a single resource (i.e., $m=1$), and two types of customers (i.e., $n=2$). Let $k$ be the system size. In the experiment, we set horizon length $T=k$. 
The number of arrivals for each customer type follows an independent binomial distribution with mean $0.5 k$ given horizon length $k$ (i.e., $\lambda_1 = \lambda_2 = 0.5$).
Both types of customers consume one unit of the resource if accepted (i.e., $a_{ij}=1$ for $i=1$ and $j=\{1,2\}$). Customers of type $1$ generate revenue $r_1$ if accepted, and
customers of type $2$ generate revenue $r_2$ if accepted.

Following the setting in \citet{bumpensanti2020re}, we test two cases: 1) $r_1=2$, $r_2=1$ and 2) $r_1=5$, $r_2=1$. 
We experiment with capacities $C_i=0.7k, 0.8k, 0.9k$ for $i=\{1,2\}$, 
and horizon length $k=10^3, 2\times 10^3, \dots , 10^4$, for each case, respectively.

\begin{figure}[h]
\center
\includegraphics[height=6cm,width=8cm]{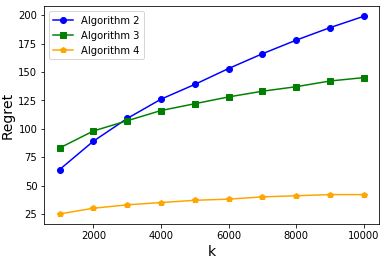}
\includegraphics[height=6cm,width=8cm]{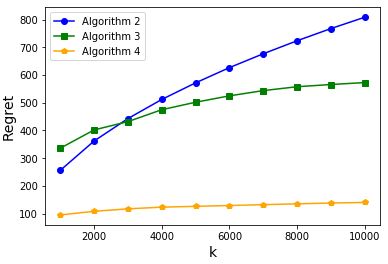}
\\
(a) $C=0.7k$, $r_1=2$ and $r_2=1$ \ \ \ \ \ \ \ \ \ \ \ \ \ \ \ \ \ \ \ (b)  $C=0.7k$, $r_1=5$ and $r_2=1$
\\
\includegraphics[height=6cm,width=8cm]{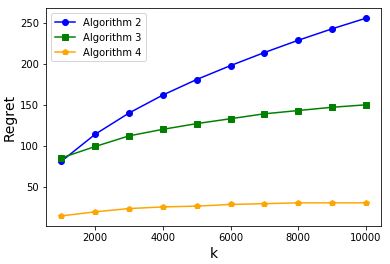}
\includegraphics[height=6cm,width=8cm]{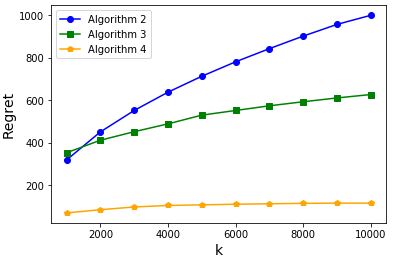}
\\
(c) $C=0.8k$, $r_1=2$ and $r_2=1$ \ \ \ \ \ \ \ \ \ \ \ \ \ \ \ \ \ \ \ (d)  $C=0.8k$, $r_1=5$ and $r_2=1$
\\
\includegraphics[height=6cm,width=8cm]{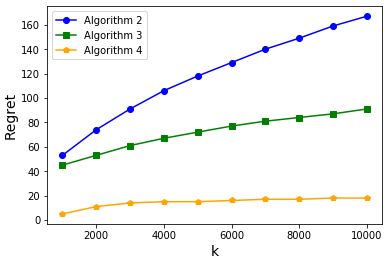}
\includegraphics[height=6cm,width=8cm]{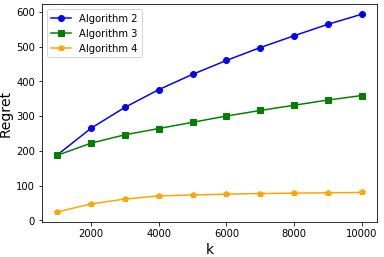}
\\
(e) $C=0.9k$, $r_1=2$ and $r_2=1$ \ \ \ \ \ \ \ \ \ \ \ \ \ \ \ \ \ \ \ (f)  $C=0.9k$, $r_1=5$ and $r_2=1$
\caption{Regret of Algorithm~\ref{alg:ff}, Algorithm~\ref{alg:fd}, and Algorithm~\ref{alg:fr} for $k=10^3, 2\times 10^3, \dots , 10^4$.}\label{fig:exp}
\end{figure}

Figure \ref{fig:exp} shows the regret of Algorithm~\ref{alg:ff}, Algorithm~\ref{alg:fd} and Algorithm~\ref{alg:fr}, namely the average gap between the algorithm's total revenue and the hindsight optimum, under different settings.
The first row shows the case where $C=0.7k$, $r_1=2$, $r_2=1$ and $C=0.7k$, $r_1=5$, $r_2=1$. The second row shows the case where $C=0.8k$, $r_1=2$, $r_2=1$ and $C=0.8k$, $r_1=5$, $r_2=1$. The third row shows the case where $C=0.9k$, $r_1=2$, $r_2=1$ and $C=0.9k$, $r_1=5$, $r_2=1$. 
We observe that in all three cases, algorithm Algorithm~\ref{alg:fr} performs the best among all three algorithms, and Algorithm~\ref{alg:ff} performs the worst. This comparison result is aligned with our theoretical regret bounds shown in \eqref{eq:regret:ff} and Theorem~\ref{thm:fd:regret}.


    

We next investigate the benefit of solving DLPs by examining the performance of Algorithm~\ref{alg:mr} under different LP-resolving times.
Recall that Algorithm~\ref{alg:mr} runs Algorithm~\ref{alg:lpt} as subroutines in the first $U$ epochs, and then runs Algorithm~\ref{alg:fd} as subroutines in the remaining epochs. We test the performance of Algorithm~\ref{alg:mr} for $U=0, 1, 2, 3, 4$ under two cases: 1) $r_1=2$, $r_2=1$, 2) $r_1=5$, $r_2=1$. We set capacity $C=0.8k$ and horizon length $k=10^3, 2\times 10^3, \dots , 10^4$.
In both cases, we observe that the regret decreases significantly if we resolves DLPs in the first four epochs. The benefit of resolving DLPs becomes less significant as the value of $U$ increases. 

\begin{figure}[h] 
\center
\includegraphics[height=6cm,width=8cm]{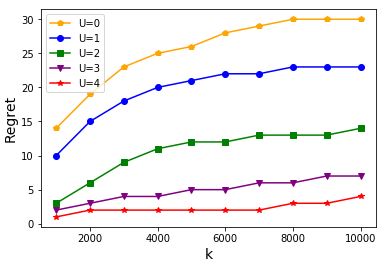}
\includegraphics[height=6cm,width=8cm]{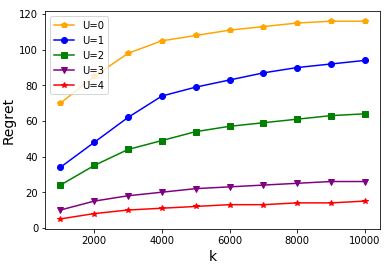}
\\
(a) $C=0.8k$, $r_1=2$ and $r_2=1$ \ \ \ \ \ \ \ \ \ \ \ \ \ \ \ \ \ \ \ (b) $C=0.8k$, $r_1=5$ and $r_2=1$
\caption{Regret of Algorithm~\ref{alg:mr} with parameter $U=1,2,3,4$ for $k=1000, 2000, \dots , 10000$.}
\label{fig:mr}
\end{figure}

Next, we test the performance of Algorithm 7 and Algorithm 8, and compare their performance to that of Algorithm~\ref{alg:fr} and Algorithm~\ref{alg:mr} to show the effect of reusing bid prices. We test Algorithm 7 under two cases: 1) $r_1=2$, $r_2=1$, 2) $r_1=5$, $r_2=1$, and we set capacity $C=0.8k$. 
In both cases, we observe a significant improvement in regret by warm-starting each epoch $u$ with the bid prices from the last epoch.


We test the performance of Algorithm 8 using the same setting as that of Algorithm 7 where 1) $r_1=2$, $r_2=1$, 2) $r_1=5$, $r_2=1$; capacity $C=0.8k$; horizon length $k=10^3, 2\times 10^3, \dots , 10^4$. We set parameter $U=0, 1, 2, 3, 4$ under both cases. By comparing the results in Figure~\ref{fig:mr} and Figure~\ref{fig:mrv}, we observe that the warm-starting technique is effective in improving the algorithm's performance. In addition, we observe that the value of resolving DLPs in the case of Algorithm 8 is not as significant as that in the case of Algorithm~\ref{alg:mr}.


\begin{figure}[hb] 
\center
\includegraphics[height=6cm,width=8cm]{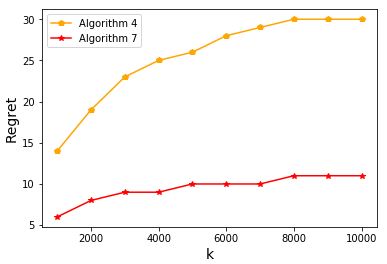}
\includegraphics[height=6cm,width=8cm]{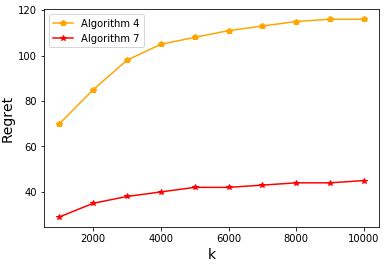}
\\
(a) $C=0.8k$, $r_1=2$ and $r_2=1$ \ \ \ \ \ \ \ \ \ \ \ \ \ \ \ \ \ \ \ (b)    $C=0.8k$, $r_1=5$ and $r_2=1$
\caption{Regret of Algorithm~\ref{alg:fr} and Algorithm 7 for $k=10^3, 2\times 10^3, \dots , 10^4$.}
\label{fig:frv}
\end{figure}

\begin{figure}[ht] 
\center
\includegraphics[height=6cm,width=8cm]{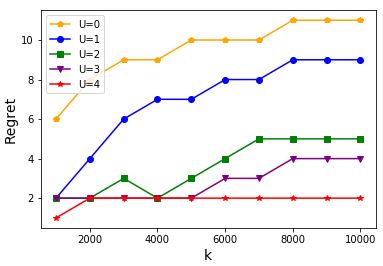}
\includegraphics[height=6cm,width=8cm]{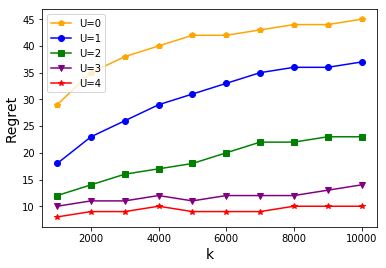}
\\
(a) $C=0.8k$, $r_1=2$ and $r_2=1$ \ \ \ \ \ \ \ \ \ \ \ \ \ \ \ \ \ \ \ (b) $C=0.8k$, $r_1=5$ and $r_2=1$
\caption{Regret of algorithm Algorithm 8 with parameter $U=1,2,3,4$ for $k=10^3, 2\times 10^3, \dots , 10^4$.}
\label{fig:mrv}
\end{figure}


\subsection{Multiple Resources}
Now we consider a NRM problem with multiple resources. Suppose that there are a thousand types of customers and a thousand types of resources. The total number of arrivals of each customer type follows an independent binomial distribution with mean $\frac{k}{1000}$, i.e., the arrival rate is $\lambda_j = \frac{1}{1000}$ for $j\in[1000]$. 
Each type-$i$ customer pays price $r_i$ if accepted. We generate $r_i$ by randomly sampling a value from the set $\{1,2,\dots,10\}$ under a uniform distribution.
We generate the bill-of-material matrix $A\in\real^{1000\times 1000}$ by setting each element
\[a_{ij}=\left\{\begin{matrix}
\; 0, & \text{ with probability } 0.5, \\  
\; 1, & \text{ with probability } 0.5, \\ 
\end{matrix}\right.\]
for all $i \in \{1,2,\dots,1000\}$, $j \in \{1,2,\dots,1000\}$.
We then fix revenue vector $r$ and the bill-of-material matrix $A$ in the experiments. We set $C=0.8k$, and test our Algorithm~\ref{alg:ff}, Algorithm~\ref{alg:fd} and Algorithm~\ref{alg:fr} under horizon length $k=5\times 10^4, 10 \times 10^4, \dots , 5 \times 10^5$.

\begin{figure}[h] 
\center
\includegraphics[height=7cm,width=9cm]{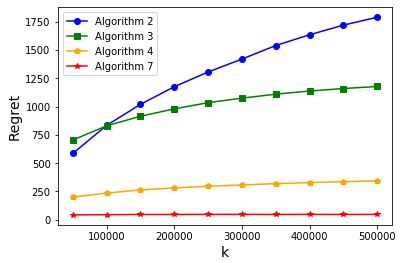}
\caption{Regret of Algorithm~\ref{alg:ff}, Algorithm~\ref{alg:fd}, Algorithm~\ref{alg:fr}, Algorithm 7 for $k=5\times 10^4, 10 \times 10^4, \dots , 5 \times 10^5$.}
\label{fig:multi1}
\end{figure}

Figure \ref{fig:multi1} shows the average regret of Algorithm~\ref{alg:ff}, Algorithm~\ref{alg:fd}, Algorithm~\ref{alg:fr}, Algorithm 7 over $1000$ experiments with the fixed revenue vector $r$ and the fixed bill-of-material matrix $A$. The regret results of different algorithms are aligned with those in the single-resource case: 
1) all algorithms show convergence in regret as $k \rightarrow \infty$; 
2) among Algorithm~\ref{alg:ff}, Algorithm~\ref{alg:fd} and Algorithm~\ref{alg:fr}, Algorithm~\ref{alg:fr} performs the best and Algorithm~\ref{alg:ff} performs the worst; 3) Algorithm 7 improves the performance of Algorithm~\ref{alg:fr} by warm-starting each epoch with the bid prices from the last epoch.

\section{Conclusion}
In this paper, we study the quantity-based network revenue management problem and propose near-optimal algorithms with provable performance guarantees. In particular, our algorithm is the first to achieve $o(\sqrt{T})$ regret bound among all the algorithms that do not solve any deterministic linear programs. Moreover, our algorithms are amenable to implementations in online platforms as the algorithms do not perform matrix inversions, and their memory consumption does not scale with the online traffic.




\bibliographystyle{informs2014}
\bibliography{reference}

\newpage

\begin{APPENDICES}
\section{Proofs for Algorithm~\ref{alg:ff} }\label{ap:ff}

Without loss of generality, we prove lemmas, propositions and theorems with respect to Algorithm~\ref{alg:ff} with  $$t_1=1, \quad t_2= t^*$$ and initial capacity $C$, for any $t^* \in \{1, 2,\ldots, T\}$. Let $C_\text{max} = \max_{i \in [m]}C_i$ and $C_\text{min} = \min_{i \in [m]}C_i$ be the maximum and minimum of the initial capacity values, respectively. Then other notations in Algorithm~\ref{alg:ff} become 
\[L=T-t_1+1 =T,\]
\begin{equation} \label{eq:bar_theta_C}
\bar \theta = \frac{C_{\max}}{C_{\min}} \sum_{i=1}^m \bar{\alpha_i} ,
\end{equation}
\begin{equation} \label{eq:ogd_app_paraD}
D = \bar \theta \sqrt{m} = \frac{C_{\max}}{C_{\min}} \sum_{i=1}^m \bar{\alpha_i} \sqrt{m}, 
\end{equation}
\begin{equation}\label{eq:ogd_app_paraG}
G = \frac{C_{\max}}{T}\sqrt{m} + \bar{a}\sqrt{m},
\end{equation}
and
\[ g_t(\theta) = \sum_{i\in[m]} \theta_i \left(\frac{C_i}{T} - y_{t} a_{i,j(t)} \right).\]


Suppose Algorithm~\ref{alg:ff} stops at the beginning of period $\tau$. In particular, when the algorithm stops due to insufficient remaining resources at the beginning of period $t$, namely due to some $j$ that violates $A_j \leq B(t)$, we have $\tau = t$.
Otherwise, we have $\tau = t^*+1$.





\subsection{Lemma~\ref{lm:OGD'}}

\begin{lemma}\label{lm:OGD'}
There exists some random variable $I \in [m]$ such that
\[ \sum_{t=1}^{\tau-1} g_t(\theta^{(t)}) \leq \alpha \frac{\tau-1}{T} C_I - \alpha\frac{t^*}{T} C_I  + \alpha \bar{a} + \left(\rho_1  \frac{C_\text{max}^2}{C_\text{min} T} + \rho_0 \frac{C_\text{max}}{C_\text{min}} \right) \sqrt{t^*}  \]
almost surely, for any $\alpha \in [0,  \bar{\theta}] $, where $\bar a$ is defined in \eqref{eq:bara}, and  $\rho_0, \rho_1 > 0$ do not depend on $C_\text{max}$, $C_\text{min}$ or $T$.  
\end{lemma}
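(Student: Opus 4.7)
The plan is to apply the OGD regret guarantee (Proposition~\ref{prop:ogd}) with a cleverly chosen comparator vector of the form $\alpha e_I$, where $e_I \in \mathbb{R}^m$ is the $I$-th standard basis vector, and to split into two cases according to whether Algorithm~\ref{alg:ff} terminates because of resource exhaustion or because it has processed all $t^*$ periods. Since each $g_t$ is linear in $\theta$ with $\nabla g_t(\theta) = C/T - y_t A_{j(t)}$, the Cauchy--Schwarz and triangle inequalities give $\|\nabla g_t(\theta)\| \leq (C_{\max}/T)\sqrt{m} + \bar a\sqrt{m} = G$, and the feasible set $[0,\bar\theta]^m$ has diameter at most $\bar\theta \sqrt{m} = D$. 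Proposition~\ref{prop:ogd} then yields, for every $\theta \in [0,\bar\theta]^m$,
\[
\sum_{t=1}^{\tau-1} g_t(\theta^{(t)}) \;\leq\; \sum_{t=1}^{\tau-1} g_t(\theta) + \tfrac{3}{2} GD \sqrt{\tau-1}.
\]

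I will define $I$ in a sample-path-dependent but $\alpha$-independent way. If the algorithm runs to completion, i.e., $\tau = t^*+1$, set $I=1$ (any fixed coordinate). If instead the algorithm halts at some period $\tau \leq t^*$ because the feasibility check $A_j \leq B(t)$ fails, the stopping condition guarantees existence of a coordinate $i$ with $B_i(\tau) = C_i - \sum_{t=1}^{\tau-1} y_t a_{i,j(t)} < a_{i,j} \leq \bar a$ for some $j$, so I set $I$ to be any such $i$; in particular $\sum_{t=1}^{\tau-1} y_t a_{I,j(t)} \geq C_I - \bar a$. Since $t^* \leq T$, this further gives $\sum_{t=1}^{\tau-1} y_t a_{I,j(t)} \geq (t^*/T) C_I - \bar a$.

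In the exhaustion case, plug $\theta = \alpha e_I$ into the OGD bound. Linearity of $g_t$ gives $\sum_{t=1}^{\tau-1} g_t(\alpha e_I) = \alpha(\tau-1) C_I/T - \alpha \sum_{t=1}^{\tau-1} y_t a_{I,j(t)}$, and the inequality on the partial sum of $y_t a_{I,j(t)}$ converts this into $\alpha(\tau-1) C_I/T - \alpha(t^*/T) C_I + \alpha \bar a$, which is precisely the first three terms of the claimed bound. In the run-to-completion case, $\tau-1 = t^*$, so the first two terms of the claimed right-hand side cancel and it suffices to show $\sum_{t=1}^{\tau-1} g_t(\theta^{(t)}) \leq \alpha \bar a + (\mathrm{OGD\ term})$; this follows by instead choosing $\theta = 0$ in the OGD bound, which gives $\sum g_t(\theta^{(t)}) \leq \tfrac{3}{2} GD\sqrt{t^*}$, and the $\alpha \bar a \geq 0$ slack absorbs anything else. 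In both cases $\sqrt{\tau-1} \leq \sqrt{t^*}$ lets me replace $\sqrt{\tau-1}$ by $\sqrt{t^*}$ in the OGD tail.

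To finish, I unpack $\tfrac{3}{2} GD$ with the explicit values from \eqref{eq:bar_theta_C}--\eqref{eq:ogd_app_paraG}:
\[
\tfrac{3}{2} GD \;=\; \tfrac{3m}{2}\Bigl(\textstyle\sum_i \bar\alpha_i\Bigr)\Bigl(\tfrac{C_{\max}^2}{C_{\min} T} + \bar a\,\tfrac{C_{\max}}{C_{\min}}\Bigr),
\]
which matches the prescribed form $\rho_1 C_{\max}^2 / (C_{\min} T) + \rho_0 C_{\max}/C_{\min}$ with $\rho_1 = \tfrac{3m}{2}\sum_i \bar\alpha_i$ and $\rho_0 = \tfrac{3m\bar a}{2}\sum_i \bar\alpha_i$, both independent of $C_{\max}, C_{\min}, T$. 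The main subtlety will be the ``run-to-completion'' case: a naive uniform choice $\theta = \alpha e_I$ fails there because one cannot guarantee the needed lower bound on $\sum y_t a_{I,j(t)}$, and handling it requires the separate use of $\theta = 0$ together with the observation that the two $C_I$ terms on the lemma's right-hand side cancel exactly when $\tau - 1 = t^*$.
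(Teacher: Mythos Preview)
Your proposal is correct and follows essentially the same approach as the paper: split on whether the algorithm stops early due to resource depletion or runs to completion, invoke the OGD regret bound with a coordinate comparator in the depletion case and the zero vector in the completion case, and unpack $\tfrac{3}{2}GD$ to identify $\rho_0,\rho_1$. The only cosmetic difference is that the paper plugs in $\theta=\alpha\tfrac{t^*}{T}e_I$ rather than your $\theta=\alpha e_I$; both choices lie in $[0,\bar\theta]^m$ and lead to the same final inequality after using $t^*\le T$, so your slightly more direct comparator is fine.
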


\begin{proof}{Proof of Lemma~\ref{lm:OGD'}.}

According to the OGD property in Proposition~\ref{prop:ogd}, we have
\begin{equation}\label{eq:OGD}
 \sum_{t=1}^{\tau-1} g_t(\theta^{(t)}) \leq \sum_{t=1}^{\tau-1} g_t(\theta) + \rho \sqrt{\tau-1}
\end{equation}
for all $\theta \in [0, \bar \theta]^m$, where $\rho = \frac{3}{2} G D$. 


Consider the following two cases.

\begin{itemize}
\item Case 1: $\tau-1 < t^*$. Let $\mathbf{e}_i$ denote a unit vector with the $i$-th component being one. For any $\alpha \in [0, \bar \theta]$, since $\alpha \cdot \frac{t^*}{T} \cdot \mathbf{e}_i \in [0, \bar \theta]^m$, we use \eqref{eq:OGD} to obtain
\begin{equation}\label{eq:ogd_proof:1}
\sum_{t=1}^{\tau-1} g_t(\theta^{(t)}) \leq \sum_{t=1}^{\tau-1} g_t(\alpha \cdot \frac{t^*}{T} \cdot \mathbf{e}_i) + \rho \sqrt{\tau-1}    
\end{equation}
for all $i \in [m]$.

Conditioned on $\tau-1 < t^*$, since some resource $I$ has been depleted before the end of the horizon, we have
$\sum_{t=1}^{\tau-1} y_t a_{I, j(t)} \geq C_I - \max_{j\in[n]} a_{I,j} \geq C_I - \bar{a} $, and 
\begin{align}
\sum_{t=1}^{\tau-1} g_t(\alpha \cdot \frac{t^*}{T} \cdot \mathbf{e}_I) & = \sum_{t=1}^{\tau-1} \alpha \frac{t^*}{T} \left( \frac{C_I}{T} - y_t a_{I,j(t)} \right) \nonumber\\
& = \sum_{t=1}^{\tau-1} \alpha  \frac{t^*}{T} \frac{C_I}{T} - \sum_{t=1}^{\tau-1} y_t \alpha  \frac{t^*}{T} a_{I,j(t)}\nonumber \\
& = (\tau-1) \alpha  \frac{t^*}{T} \frac{C_I}{T} -  \alpha  \frac{t^*}{T} \sum_{t=1}^{\tau-1} y_t a_{I,j(t)}\nonumber \\
& \leq (\tau-1) \alpha  \frac{t^*}{T} \frac{C_I}{T} -  \alpha  \frac{t^*}{T} (C_I - \bar{a})\nonumber\\
& = (\tau-1) \alpha  \frac{t^*}{T} \frac{C_I}{T} -  \alpha \frac{t^*}{T} C_I +  \alpha  \frac{t^*}{T} \bar{a}\nonumber\\
& \leq (\tau-1) \alpha  \frac{C_I}{T} -  \alpha  \frac{t^*}{T} C_I + \alpha \bar{a},
\label{eq:ogd_proof:2}
\end{align} 
where the first equality is by definition of $g_t(\cdot)$, the first inequality is because resource $I$ is depleted by time $\tau-1$, and the last inequality follows since $t^*\leq T$. 

Combining \eqref{eq:ogd_proof:1} and \eqref{eq:ogd_proof:2}, we obtain
\[ \sum_{t=1}^{\tau-1} g_t(\theta^{(t)}) 
\leq \alpha \frac{\tau-1}{T} C_I  - \alpha\frac{t^*}{T} C_I + \alpha \bar{a} + \rho \sqrt{t^*}.\]

\item Case 2: $\tau -1 = t^*$. Again, we use \eqref{eq:OGD} to obtain
\begin{align*}
    \sum_{t=1}^{\tau-1} g_t(\theta^{(t)}) & \leq \sum_{t=1}^{\tau-1} g_k(0) + \rho \sqrt{\tau-1} \nonumber\\
    & = 0 + \rho \sqrt{\tau-1}\nonumber\\
    & = \alpha \frac{t^*}{T} C_I  - \alpha\frac{t^*}{T} C_I + \rho \sqrt{t^*} \nonumber\\
    & = \alpha \frac{\tau-1}{T} C_I
    - \alpha\frac{t^*}{T} C_I + \rho \sqrt{t^*} \\
    &\leq \alpha \frac{\tau-1}{T} C_I
    - \alpha\frac{t^*}{T} C_I + \alpha \bar{a} + \rho \sqrt{t^*} . 
\end{align*}

\end{itemize}

Finally, from \eqref{eq:ogd_app_paraD} and \eqref{eq:ogd_app_paraG}, it is easy to derive
\[ 
\rho = \frac{3}{2} G D = \rho_1 \frac{C_\text{max}^2}{C_\text{min} T} + \rho_0 \frac{C_\text{max}}{C_\text{min}},\]
where $\rho_0$ and $\rho_1$ only depend on $m$, the coefficients in $A$, and $r_1,\ldots,r_n$.

This completes the proof. \halmos
\end{proof}

\subsection{Lemma~\ref{lm:martingale}}

Consider the Lagrangian relaxation of \eqref{eq:DLP1}, namely
\begin{align}
\begin{split} \label{eq:dualDLP}
\mathsf{LP}(\theta) = \max_x &\;\; T \sum_{j\in[n]} r_j x_j + T \sum_{i \in [m]} \theta_i( \frac{C_i}{T} - \sum_{j \in [n]} x_j a_{ij}) \\
\text{s.t. } &\;\; 0 \leq x_j \leq \lambda_j. \quad \forall j \in [n]
\end{split}
\end{align}
Define $\{M_t\}_{t=1,\ldots,\tau-1}$ as the stochastic process
\begin{equation}\label{eq:martingale}
    M_t := \sum_{s=1}^t \left[ \frac{\mathsf{LP}(\theta^{(s)})}{T} - y_s r_{j(s)} - g_s(\theta^{(s)})\right].
\end{equation}

\begin{lemma}\label{lm:martingale}
The stochastic process $M_t$ as defined in \eqref{eq:martingale} is a martingale for $t=1,2,\ldots,\tau-1$.
\end{lemma}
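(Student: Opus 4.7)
Let $\mathcal{F}_t$ denote the $\sigma$-algebra generated by the customer-type history $(j(1),\ldots,j(t))$ (equivalently, the history of observed arrivals through period $t$). Since $\theta^{(t)}$ is produced by the OGD update at the end of period $t-1$ from $\theta^{(t-1)}$, $y_{t-1}$ and $j(t-1)$, and since the stopping condition depends only on $B(t)$ (itself a function of past $y_s$), the quantity $\theta^{(t)}$ is $\mathcal{F}_{t-1}$-measurable, and $\tau$ is a stopping time with respect to $\{\mathcal{F}_t\}$. In particular, $\mathsf{LP}(\theta^{(t)})$ is $\mathcal{F}_{t-1}$-measurable. The plan is to show, for each $t$ with $1\leq t\leq \tau-1$,
\[
\mathbb{E}\!\left[\,y_t r_{j(t)} + g_t(\theta^{(t)}) \,\big|\, \mathcal{F}_{t-1}\right] \;=\; \frac{\mathsf{LP}(\theta^{(t)})}{T},
\]
from which $\mathbb{E}[M_t - M_{t-1}\mid \mathcal{F}_{t-1}]=0$ follows immediately.

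The first step is to simplify the one-period quantity $y_t r_{j(t)}+g_t(\theta^{(t)})$ using the definition of $g_t$ in \eqref{eq:gtthetaC} and the threshold rule $y_t=\mathbf{1}\{r_{j(t)}>\sum_i \theta_i^{(t)} a_{i,j(t)}\}$. Collecting the $y_t$ terms gives
\[
y_t r_{j(t)} + g_t(\theta^{(t)}) \;=\; y_t\Bigl(r_{j(t)}-\sum_{i=1}^m \theta_i^{(t)} a_{i,j(t)}\Bigr) + \sum_{i=1}^m \theta_i^{(t)}\frac{C_i}{T} \;=\; \bigl[r_{j(t)}-\textstyle\sum_i \theta_i^{(t)} a_{i,j(t)}\bigr]^+ + \sum_{i=1}^m \theta_i^{(t)}\frac{C_i}{T},
\]
since $y_t$ is exactly the indicator that makes the first term a positive part.

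The second step is to evaluate $\mathsf{LP}(\theta^{(t)})/T$ in closed form. The inner maximization in \eqref{eq:dualDLP} decouples across $j$, and since each $x_j$ has coefficient $T(r_j-\sum_i\theta_i a_{ij})$ over the interval $[0,\lambda_j]$, the optimizer sets $x_j=\lambda_j$ or $x_j=0$ according to the sign of this coefficient. This yields
\[
\frac{\mathsf{LP}(\theta^{(t)})}{T} \;=\; \sum_{j=1}^n \lambda_j\bigl[r_j - \textstyle\sum_i \theta_i^{(t)} a_{ij}\bigr]^+ + \sum_{i=1}^m \theta_i^{(t)}\frac{C_i}{T}.
\]

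The third step is to take the conditional expectation of the first display: since $\theta^{(t)}$ is $\mathcal{F}_{t-1}$-measurable and $j(t)$ is independent of $\mathcal{F}_{t-1}$ with $\Pr(j(t)=j)=\lambda_j$, averaging the positive-part term over $j(t)$ reproduces exactly the summation in the second display, and the constant term $\sum_i \theta_i^{(t)} C_i/T$ is unchanged. Matching the two expressions gives the desired identity, completing the proof. The argument is essentially a verification rather than a subtle estimate, so there is no real obstacle; the only point that deserves care is the measurability of $\theta^{(t)}$ and the fact that the break/stopping rule is $\mathcal{F}_{t-1}$-measurable, which ensures the martingale property is well-defined on $\{1,\ldots,\tau-1\}$.
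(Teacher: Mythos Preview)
Your proof is correct and follows essentially the same approach as the paper: both show that the conditional expectation of each increment vanishes by verifying $\mathbb{E}[y_t r_{j(t)} + g_t(\theta^{(t)}) \mid \mathcal{F}_{t-1}] = \mathsf{LP}(\theta^{(t)})/T$, using the $\mathcal{F}_{t-1}$-measurability of $\theta^{(t)}$ and the explicit form of the Lagrangian maximizer. Your positive-part packaging $[r_j - \sum_i \theta_i^{(t)} a_{ij}]^+$ is a slightly cleaner presentation of what the paper writes out via $x_j^*(\theta^{(t)}) = \lambda_j\,\mathbf{1}\{r_j > \sum_i \theta_i^{(t)} a_{ij}\}$, but the underlying computation is identical.
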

\begin{proof}{Proof of Lemma~\ref{lm:martingale}.}
Let $\mathcal{H}_t$ denote all the information in periods $1,\ldots,t$ where $t \leq \tau$.
\begin{align*}
\mathbf{E}[M_t | \mathcal{H}_{t-1}] &= \mathbf{E}\left[\sum_{s=1}^t \left[ \frac{\mathsf{LP}(\theta^{(s)})}{T} - y_s r_{j(s)} - g_s(\theta^{(s)})\right]| \mathcal{H}_{t-1}\right]\\
& = \mathbf{E}\left[\sum_{s=1}^{t-1} \left[ \frac{\mathsf{LP}(\theta^{(s)})}{T} - y_s r_{j(s)} - g_s(\theta^{(s)})\right]| \mathcal{H}_{t-1}\right] +\mathbf{E}\left[ \frac{\mathsf{LP}(\theta^{(t)})}{T} - y_t r_{j(t)} - g_t(\theta^{(t)})| \mathcal{H}_{t-1}\right].
\end{align*}

For the first term, since $\sum_{s=1}^{t-1} \left[ \mathsf{LP}(\theta^{(s)})/T - y_s r_{j(s)} - g_s(\theta^{(s)}) \right] \in \mathcal{H}_{t-1}$, we have
\begin{equation} \label{eq:0}
    \mathbf{E}\left[\sum_{s=1}^{t-1} \left[ \frac{\mathsf{LP}(\theta^{(s)})}{T} - y_s r_{j(s)} - g_s(\theta^{(s)})\right]| \mathcal{H}_{t-1}\right]=\sum_{s=1}^{t-1} \left[ \frac{\mathsf{LP}(\theta^{(s)})}{T} - y_s r_{j(s)} - g_s(\theta^{(s)})\right]=M_{t-1}.
\end{equation}

For the second term, let ${x}^*_j(\theta)$ for $j\in[n]$ denote an optimal solution to \eqref{eq:dualDLP}. The structure of \eqref{eq:dualDLP} is simple enough that we can calculate for all $j \in [n]$:
\begin{itemize}
    \item given $r_j > \sum_{i\in[m]} \theta_i^{(t)} a_{ij}$, we have $x^*_j(\theta^{(t)}) = \lambda_j$;
    \item given $r_j \leq \sum_{i\in[m]} \theta_i^{(t)} a_{ij}$, we have $x^*_j(\theta^{(t)}) = 0$.
\end{itemize}
By definition, we know 
$y_t = \mathbf{1}(r_{j(t)} > \sum_{i\in[m]} \theta_i^{(t)} a_{i,j(t)})$. Therefore, we have $y_t = x^*_{j(t)}(\theta^{(t)}) /\lambda_{j(t)}$. 

Notice that $\theta^{(t)} \in \mathcal{H}_{t-1}$, since the value of $\theta^{(t)}$ has been determined at the end of period $t-1$. Thus, by taking expectation over all customer types $j\in [n]$ for $j(t)$, we have
\begin{align}
&\mathbf{E}[y_tr_{j(t)}| \mathcal{H}_{t-1}] \nonumber\\
= &\mathbf{E}[ \frac{x^*_{j(t)}(\theta^{(t)}) }{\lambda_{j(t)}} r_{j(t)}| \mathcal{H}_{t-1}] \nonumber\\
= &\sum_{j \in [n]} \mathbf{P}(j(t) = j) \frac{x^*_{j}(\theta^{(t)}) }{\lambda_{j}} r_{j} \nonumber\\
= &\sum_{j \in [n]} \lambda_j \frac{x^*_{j}(\theta^{(t)}) }{\lambda_{j}} r_{j} \nonumber\\
= &\sum_{j \in [n]} r_j x^*_j(\theta^{(t)}).   \label{eq:1}
\end{align}

\begin{align} 
&\mathbf{E}[g_t(\theta^{(t)})| \mathcal{H}_{t-1}] \nonumber\\
=&\mathbf{E}[\sum_{i\in[m]} \theta_i^{(t)} \left(\frac{C_i}{T} - y_{t} a_{i,j(t)} \right)| \mathcal{H}_{t-1}] \nonumber\\
=&\sum_{i\in[m]} \theta_i^{(t)} \frac{C_i}{T} - \sum_{i\in[m]} \theta_i^{(t)} \mathbf{E}[  y_{t} a_{i,j(t)} | \mathcal{H}_{t-1}] \nonumber\\
=&\sum_{i\in[m]} \theta_i^{(t)} \frac{C_i}{T} - \sum_{i\in[m]} \theta_i^{(t)} \mathbf{E}[ \frac{x^*_{j(t)}(\theta^{(t)})}{\lambda_{j(t)}} a_{i,j(t)} | \mathcal{H}_{t-1}] \nonumber\\
=&\sum_{i\in[m]} \theta_i^{(t)} \frac{C_i}{T} - \sum_{i\in[m]} \theta_i^{(t)} \sum_{j \in [n]} \mathbf{P}(j(t) = j) \frac{x^*_{j}(\theta^{(t)})}{\lambda_{j}} a_{i,j} \nonumber\\
=&\sum_{i\in[m]} \theta_i^{(t)} \frac{C_i}{T} - \sum_{i\in[m]} \theta_i^{(t)} \sum_{j \in [n]} \lambda_j \frac{x^*_{j}(\theta^{(t)})}{\lambda_{j}} a_{i,j} \nonumber\\
=&\sum_{i \in [m]} \theta_i^{(t)}\frac{C_i}{T}-\sum_{i\in [m]} \sum_{j\in [n]} \theta_i^{(t)} x^*_j(\theta^{(t)}) a_{ij}. \label{eq:2}
\end{align}

Combining \eqref{eq:1} and \eqref{eq:2}, we have
\begin{equation}
   \mathbf{E}[y_t r_{j(t)} +g_t(\theta^{(t)})| \mathcal{H}_{t-1}] = \sum_{j \in [n]} r_j x^*_j(\theta^{(t)}) + \sum_{i \in [m]} \theta_i^{(t)}\frac{C_i}{T}-\sum_{i\in [m]} \sum_{j\in [n]} \theta_i^{(t)} x^*_j(\theta^{(t)}) a_{ij} =\frac{\mathsf{LP}(\theta^{(t)})}{T}.
\end{equation}

Hence, we have
\begin{equation} \label{eq:3}
    \mathbf{E}\left[ \left[ \frac{\mathsf{LP}(\theta^{(t)})}{T} - y_t r_{j(t)} - g_t(\theta^{(t)})\right] \big| \mathcal{H}_{t-1}\right]=0
\end{equation}

Combining \eqref{eq:0} and \eqref{eq:3}, we have
\begin{equation}
    \mathbf{E}[M_t | \mathcal{H}_{t-1}] = M_{t-1},
\end{equation}
which shows that $M_t$ is a martingale.
\halmos
\end{proof}

\subsection{Lemma~\ref{lm:az}}
\begin{lemma}[Azuma's Inequality]\label{lm:az}
Suppose $M_t$ is a martingale, and 
\begin{equation}
    \vert M_t-M_{t-1} \vert \leq c_t
\end{equation}
almost surely, then for any positive integer $N$ and any positive $a \in \mathbb{R}$, we have
\begin{equation}
     \mathbf{P}(M_t \leq a) \geq 1- \exp\left(- \frac{a^2}{2 \sum_{k=1}^t c_k^2}\right).
\end{equation}

\end{lemma}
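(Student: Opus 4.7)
The plan is to prove this one-sided concentration bound via the standard exponential Chernoff argument applied to the martingale differences. Define $X_k := M_k - M_{k-1}$ for $k\geq 1$, with the convention $M_0 = 0$ (consistent with the empty-sum definition of $M_t$ in \eqref{eq:martingale}). By the martingale property, $\bE[X_k \mid \mathcal{H}_{k-1}] = 0$, and by hypothesis $|X_k|\leq c_k$ almost surely. Observing that $\{M_t > a\}\subseteq \{M_t \geq a\}$, it suffices to show $\mathbf{P}(M_t \geq a) \leq \exp\bigl(-a^2/(2\sum_{k=1}^t c_k^2)\bigr)$; the claim then follows by taking complements.

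The core technical ingredient is a conditional version of Hoeffding's lemma: for any $\lambda \in \R$, any sub-$\sigma$-field $\mathcal{F}$, and any random variable $X$ with $\bE[X\mid \mathcal{F}] = 0$ and $|X|\leq c$ a.s., one has $\bE[e^{\lambda X} \mid \mathcal{F}] \leq e^{\lambda^2 c^2/2}$. To prove it, I would exploit the convexity of $x \mapsto e^{\lambda x}$ on $[-c, c]$ to write the linear interpolation bound
\[
e^{\lambda X} \;\leq\; \frac{c-X}{2c}\,e^{-\lambda c} + \frac{c+X}{2c}\,e^{\lambda c},
\]
take conditional expectation using $\bE[X\mid\mathcal{F}]=0$ to obtain $\bE[e^{\lambda X}\mid\mathcal{F}]\leq \cosh(\lambda c)$, and then finish with the elementary inequality $\cosh(u)\leq e^{u^2/2}$ (verified by Taylor expansion term-by-term).

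With Hoeffding's lemma in hand, the rest is routine. By the tower property,
\[
\bE[e^{\lambda M_t}] \;=\; \bE\!\left[e^{\lambda M_{t-1}}\,\bE[e^{\lambda X_t}\mid \mathcal{H}_{t-1}]\right] \;\leq\; e^{\lambda^2 c_t^2/2}\,\bE[e^{\lambda M_{t-1}}],
\]
and iterating this from $t$ down to $0$ gives $\bE[e^{\lambda M_t}] \leq \exp\!\bigl(\lambda^2 \sum_{k=1}^t c_k^2/2\bigr)$. Combined with Markov's inequality applied to the nonnegative random variable $e^{\lambda M_t}$ for $\lambda > 0$, this yields
\[
\mathbf{P}(M_t \geq a) \;\leq\; e^{-\lambda a}\,\bE[e^{\lambda M_t}] \;\leq\; \exp\!\left(\tfrac{\lambda^2}{2}\sum_{k=1}^t c_k^2 - \lambda a\right).
\]
Optimizing over $\lambda>0$ by choosing $\lambda^\star = a/\sum_{k=1}^t c_k^2$ produces the stated bound.

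The only genuinely substantive step is Hoeffding's lemma; everything else (Markov's inequality, the tower property, and the quadratic optimization in $\lambda$) is mechanical. I anticipate no real obstacle, since this is a textbook result; the write-up should simply make explicit the convexity-plus-$\cosh$ argument so that the lemma is self-contained rather than invoked as a black box.
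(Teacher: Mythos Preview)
Your argument is correct and is the standard textbook derivation of Azuma's inequality via the Chernoff method and Hoeffding's lemma. Note, however, that the paper does not actually prove this lemma: it is simply stated as a classical named result and invoked later in the proof of Lemma~\ref{lm:Mt}. So there is no ``paper's own proof'' to compare against; you have supplied a self-contained proof where the paper relies on the reader's familiarity with the result.
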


\subsection{Lemma~\ref{lm:Mt}}
\begin{lemma}\label{lm:Mt}
Consider $M_t$ as defined in \eqref{eq:martingale}. With probability at least $1-\frac{1}{K}$, we have
\[ M_t \leq \left(\rho_2 \frac{C_\text{max}}{C_\text{min}} + \rho_3\right) \sqrt{\tau-1} (\log K)^{\frac{1}{2}} \]
for all $t=1,2,\ldots,\tau-1$, where $\rho_2$ and $\rho_3$ do not depend on $C_\text{max}$, $C_\text{min}$ or $K$. \end{lemma}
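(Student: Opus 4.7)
}

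The plan is to apply Azuma's inequality (Lemma~\ref{lm:az}) to the martingale $M_t$ established in Lemma~\ref{lm:martingale}, after establishing a uniform bound on the per-step increments. The argument has three steps.

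First, I would bound the increments $|M_t - M_{t-1}|$ by a constant $c$ of order $O(C_{\max}/C_{\min})$. The increment consists of three pieces: $\LP(\theta^{(t)})/T$, $y_t r_{j(t)}$, and $g_t(\theta^{(t)})$. For $\LP(\theta^{(t)})/T$, using $0 \leq x_j \leq \lambda_j$ in the Lagrangian \eqref{eq:dualDLP} together with $\|\theta^{(t)}\|_\infty \leq \bar{\theta}$, I obtain
\[
0 \leq \LP(\theta^{(t)})/T \leq \max_j r_j + m\bar{\theta}\,C_{\max}/T.
\]
The term $y_t r_{j(t)}$ lies in $[0, \max_j r_j]$, and a similar direct estimate on $g_t(\theta^{(t)}) = \sum_i \theta_i^{(t)}(C_i/T - y_t a_{i,j(t)})$ gives $|g_t(\theta^{(t)})| \leq m\bar{\theta}(C_{\max}/T + \bar{a})$. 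Since $\bar{\theta} = (C_{\max}/C_{\min})\sum_i \bar{\alpha}_i$ by \eqref{eq:bar_theta_C} and $C_{\max}/T = \bar{C}_{\max}/\bar{T}$ is a bounded constant in the asymptotic regime of Section~\ref{sec:asymptotic}, summing the three bounds yields $|M_t - M_{t-1}| \leq c$ with $c \leq \rho_2'(C_{\max}/C_{\min}) + \rho_3'$, where $\rho_2', \rho_3'$ depend only on $m$, $\bar{a}$, $\max_j r_j$, $\sum_i \bar{\alpha}_i$, and $\bar{C}_{\max}/\bar{T}$.

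Second, I would concentrate $M_t$ uniformly over $t$ via Azuma plus a union bound. For each deterministic $t \in \{1, \ldots, T\}$, Lemma~\ref{lm:az} applied to $M_\cdot$ with increments bounded by $c$ gives
\[
\mathbf{P}\bigl(M_t \geq c\sqrt{2t\log(KT)}\bigr) \leq \exp(-\log(KT)) = 1/(KT).
\]
A union bound over $t=1,\ldots,T$ then yields, with probability at least $1 - 1/K$, that $M_t \leq c\sqrt{2t\log(KT)}$ for every $t \leq T$. Since $\tau - 1 \leq T$ deterministically and $t \leq \tau - 1$ implies $\sqrt{t} \leq \sqrt{\tau-1}$, on this good event we get
\[
M_t \leq c\sqrt{2(\tau-1)\log(KT)} \quad \text{for all } t=1,\ldots,\tau-1.
\]
In the intended usage $K = T$ (see the proof of Proposition~\ref{prop:ff:regret}), $\log(KT) = 2\log K$, so the factor $\sqrt{2\log(KT)}$ can be absorbed into $\sqrt{\log K}$ up to a constant. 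Combining with the explicit form of $c$ gives the claimed bound with $\rho_2, \rho_3$ independent of $C_{\max}$, $C_{\min}$, and $K$.

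The main technical nuance will be handling the interaction between Azuma (which requires a deterministic time index) and the stopping time $\tau$. I resolve this by the union bound over the entire deterministic horizon $t \leq T$; the bound for $t \leq \tau-1$ then follows by restriction. A cleaner alternative applies Doob's maximal inequality to the submartingale $e^{\lambda M_{t \wedge (\tau-1)}}$ and optimizes in $\lambda$, but this yields a bound of the form $\sqrt{T}$ rather than $\sqrt{\tau-1}$, which is weaker than the lemma claims. A secondary subtlety is the $\log(KT)$ versus $\log K$ distinction above, which is harmless for the downstream use in Proposition~\ref{prop:ff:regret} but should be flagged when the lemma is invoked with $K \neq T$.
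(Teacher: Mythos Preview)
Your overall plan---bound the increments, then apply Azuma---is exactly the paper's approach, and your treatment of the stopping time via a union bound over all deterministic $t\le T$ is arguably more careful than the paper's (which applies Azuma with a threshold involving the random $\sqrt{\tau-1}$ and does not make the uniformity over $t$ explicit). Two differences are worth noting. First, the paper obtains a tighter increment bound by expanding $M_t-M_{t-1}$ algebraically and observing that the $\sum_i\theta_i^{(t)}C_i/T$ term coming from $\LP(\theta^{(t)})/T$ cancels exactly against the same term in $g_t(\theta^{(t)})$; what remains is $\sum_j r_jx_j^*(\theta^{(t)})-\sum_i\theta_i^{(t)}\sum_j a_{ij}x_j^*(\theta^{(t)})-y_tr_{j(t)}+y_t\sum_i\theta_i^{(t)}a_{i,j(t)}$, which is bounded in absolute value by $\|r\|_\infty+m\bar a\,\bar\theta$. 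This removes the $C_{\max}/T$ contribution you carry and makes the constants genuinely independent of $T$ without appealing to the asymptotic scaling. Second, because of this cancellation the paper sets $\rho_2=\sqrt{2}\,m\bar a\sum_i\bar\alpha_i$ and $\rho_3=\sqrt{2}\,\|r\|_\infty$ and lands directly on $(\log K)^{1/2}$ rather than $(\log(KT))^{1/2}$; your extra logarithmic factor, as you note, is harmless for the downstream application with $K=\Theta(T)$ but does not match the lemma as stated. In short: same skeleton, but do exploit the cancellation---it gives you both the clean constants and the exact $\log K$.
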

\begin{proof}{Proof of Lemma~\ref{lm:Mt}.}
We apply Azuma's inequality to obtain
\begin{equation}\label{eq:M_t:0}
    \mathbf{P}\left(M_t \leq \left(\rho_2 \frac{C_\text{max}}{C_\text{min}} + \rho_3\right) \sqrt{\tau-1}(\log K)^{\frac{1}{2}}\right) \geq 1-\exp\left(- \frac{\left(\rho_2 \frac{C_\text{max}}{C_\text{min}} + \rho_3\right)^2 (\tau-1) \log K}{2 \sum_{k=1}^t c^2}\right),
\end{equation}
where $c$ satisfies $\vert M_k-M_{k-1} \vert \leq c_k$. We next find the proper values of $\rho_2$, $\rho_3$ and $c$.

Let ${x}^*_j(\theta)$ for $j\in[n]$ denote an optimal solution to $\LP(\theta)$ in \eqref{eq:dualDLP}. 
We have $0\leq {x}^*_j(\theta) \leq \lambda_j$. 

By definition of $M_t$, we know
\begin{align*}
M_t-& M_{t-1} = \frac{\LP(\theta^{(t)})}{T} - y_t r_{j(t)} - g_t(\theta^{(t)}) \\
&=\frac{1}{T} \left( T \sum_{j\in[n]} r_j x_j^*(\theta^{(t)}) + T \sum_{i \in [m]} \theta_i^{(t)}( \frac{C_i}{T}- \sum_{j \in [n]} a_{ij} {x}^*_j(\theta^{(t)}) ) \right) - y_t r_{j(t)} - \sum_{i\in[m]} \theta_i^{(t)} \left(\frac{C_i}{T} - y_{t} a_{i,j(t)} \right) \\
&= \sum_{j\in[n]} r_j {x}^*_j(\theta^{(t)}) - \sum_{i \in [m]} \theta_i^{(t)} \sum_{j \in [n]} a_{ij} {x}^*_j(\theta^{(t)})  - y_t r_{j(t)} + y_t \sum_{i \in [m]} a_{i, j(t)} \theta_i^{(t)}.
\end{align*}

Recall $\bar{a}=\max_{i\in[m],j\in[n]}a_{ij}$ and $\theta^{(t)} \in [0,\bar{ \theta}]^m$. 
We remove the non-positive terms in the above equation to obtain
\begin{align}\label{eq:M_t:1}
M_t-M_{t-1} 
&\leq \sum_{j\in[n]} r_j {x}^*_j(\theta^{(t)}) +  y_t \sum_{i \in [m]} a_{i, j(t)} \theta_i^{(t)} \nonumber \\
&\leq \Vert r \Vert_{\infty} \sum_{j\in[n]}  {x}^*_j(\theta^{(t)}) + m \bar{a} \bar \theta  \nonumber \\
&\leq \Vert r \Vert_{\infty} \sum_{j\in[n]}  \lambda_j + m \bar{a} \bar \theta  \nonumber \\
&= \Vert r \Vert_{\infty} + m \bar{a} \bar \theta .
\end{align}

On the other hand, we remove the non-negative terms to obtain
\begin{align}
    M_t-M_{t-1} & \geq - \sum_{i \in [m]} \theta_i^{(t)} \sum_{j \in [n]} a_{ij} {x}^*_j(\theta^{(t)})  - y_t r_{j(t)} \nonumber \\
    & \geq - m \bar{\theta} \bar{a} \sum_{j \in [n]} {x}^*_j(\theta^{(t)})  - \Vert r \Vert_\infty \nonumber \\    
    & \geq - m \bar{\theta} \bar{a} \sum_{j \in [n]} \lambda_j  - \Vert r \Vert_\infty \nonumber \\
    & \geq - m \bar{\theta} \bar{a} - \Vert r \Vert_\infty. \label{eq:M_t:3}
\end{align}

In sum, we have
\[ |M_t - M_{t-1}| \leq \Vert r \Vert_\infty + m \bar{a} \bar{\theta}.\]

We set
\[c = \Vert r \Vert_\infty + m \bar{a} \bar{\theta},\]
\[ \rho_2 = \sqrt{2} m \bar{a} \sum_{i=1}^m \bar{\alpha}_i,\]
\[ \rho_3 = \sqrt{2} \Vert r\Vert_\infty\]
in Equation \eqref{eq:M_t:0}. We can then simplify \eqref{eq:M_t:0} to 


\begin{align}\nonumber
    \mathbf{P}\left(M_t \leq \left(\rho_2 \frac{C_\text{max}}{C_\text{min}} + \rho_3\right) \sqrt{\tau-1}(\log K)^{\frac{1}{2}}\right) &\geq 1-\exp\left(- \frac{\left(\rho_2 \frac{C_\text{max}}{C_\text{min}} + \rho_3\right)^2 (\tau-1) \log K}{2 \sum_{k=1}^t (\Vert r \Vert_\infty + m \bar{a} \bar{\theta})^2}\right)
    \\
    &\geq 1-\exp\left(- \frac{\left( \sqrt{2} m \bar{a} \frac{C_{\max}}{C_{\min}} \sum_{i=1}^m \bar{\alpha_i} + \sqrt{2} \Vert r \Vert_\infty\right)^2 (\tau-1) \log K}{2 \sum_{k=1}^t (\Vert r \Vert_\infty + m \bar{a} \frac{C_{\max}}{C_{\min}} \sum_{i=1}^m \bar{\alpha_i})^2}\right) \nonumber\\
    &=1-\exp\left(- \frac{\tau-1}{t} \log K\right)
    \geq 1-\frac{1}{K}
\end{align}
for all $t=1,2,\ldots,\tau-1$.

This completes the proof. \halmos
\end{proof}

\subsection{Lemma~\ref{lm:alpha}}

\begin{lemma}\label{lm:alpha}
Given the definition of $\bar{\alpha}$ in \eqref{eq:bar_theta_C}, we have
\[ \frac{V^{\DLP}}{C_{\min}} \leq \bar{\theta}. \]
\end{lemma}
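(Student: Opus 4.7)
The plan is to upper-bound $V^{\DLP}$ directly via the capacity constraints and the definition of $\bar{\alpha_i}$, then divide by $C_{\min}$.

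First, I would establish a per-type bound on $r_j$. By the definition $\bar{\alpha_i} = \max_{j:\, a_{ij}\neq 0} r_j/a_{ij}$, whenever $a_{ij} > 0$ we have $r_j \leq \bar{\alpha_i} a_{ij}$. Assuming the (natural) condition that every customer type consumes at least one resource (i.e., for each $j$ there exists $i$ with $a_{ij}>0$; otherwise $r_j$ can be taken to be $0$ without loss), I can pick any such $i$ and use nonnegativity of the remaining terms $\bar{\alpha_{i'}} a_{i'j}$ to conclude
\[
r_j \;\leq\; \sum_{i=1}^m \bar{\alpha_i}\, a_{ij} \qquad \forall j \in [n].
\]

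Second, I would substitute this bound term-by-term into the DLP objective and swap the order of summation. Letting $w^*$ be an optimal solution to \eqref{eq:DLP}, this gives
\[
V^{\DLP} \;=\; \sum_{j=1}^n r_j w_j^* \;\leq\; \sum_{j=1}^n \Bigl(\sum_{i=1}^m \bar{\alpha_i}\, a_{ij}\Bigr) w_j^* \;=\; \sum_{i=1}^m \bar{\alpha_i} \sum_{j=1}^n a_{ij} w_j^*.
\]
Now the DLP capacity constraint $\sum_{j=1}^n a_{ij} w_j^* \leq C_i \leq C_{\max}$ yields
\[
V^{\DLP} \;\leq\; C_{\max} \sum_{i=1}^m \bar{\alpha_i}.
\]

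Dividing both sides by $C_{\min} > 0$ and recalling $\bar{\theta} = \tfrac{C_{\max}}{C_{\min}} \sum_{i=1}^m \bar{\alpha_i}$ from \eqref{eq:bar_theta_C} finishes the argument. The proof is essentially a two-line calculation, so no step constitutes a real obstacle; the only thing to be careful about is the edge case where some $\bar{\alpha_i}$ is taken over an empty set, which is handled by the mild nondegeneracy assumption that every customer type consumes some resource (standard in this literature).
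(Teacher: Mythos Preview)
Your proof is correct and is essentially the same argument as the paper's: both establish $V^{\DLP} \leq C_{\max}\sum_{i=1}^m \bar{\alpha_i}$ via the key pointwise bound $r_j \leq \sum_{i=1}^m \bar{\alpha_i} a_{ij}$ and the capacity constraints, then divide by $C_{\min}$. The paper merely packages the same computation through the Lagrangian $\LP(\theta)$ with $\theta_i=\bar{\alpha_i}$ and weak duality, whereas you work directly on the primal optimum $w^*$; the substance is identical.
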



\begin{proof}{Proof of Lemma~\ref{lm:alpha}.}
Consider $\LP(\theta)$ as defined in \eqref{eq:dualDLP}. By weak duality, we have
$\LP(\theta) \geq V^{\DLP}$
for any $\theta \geq 0$.
Recall that $\bar{\alpha_i} = \max_{j: a_{ij} \neq 0} r_j/a_{ij}$ and $\bar{\theta}
= \sum_{i\in[m]} \bar{\alpha_i} C_{\max}/C_{\min} .$
By definition, we know
\[\bar{\alpha_i}  a_{ij} \geq r_j,\; \forall j \text{ and } a_{ij} \neq 0 . \]

Now let $\theta_i = \bar{\alpha_i}$ for $i\in[m]$ and let $x^*(\theta)$ denote an optimal solution to \eqref{eq:dualDLP}. We can derive
\begin{align*}
    V^{\DLP} \leq \LP(\theta) 
    &= T \sum_{j\in[n]} (r_j - \sum_{i\in[m]} \bar{\alpha_i} a_{ij})  x_j^*(\theta)
    + \sum_{i\in[m]} \bar{\alpha_i} C_i
    \leq \sum_{i\in[m]} \bar{\alpha_i} C_{\max} .
\end{align*}
The last inequality follows since 1) the first term on the left hand side is non-positive, and 2) $C_i \leq C_{\max}$ for all $i \in [m]$ by definition.
Divide both sides by $C_{\min}$, and we obtain the result. \halmos
\end{proof}

\subsection{Proposition~\ref{prop:ff:regret}} 

Now we prove the following proposition, which is more general than Proposition~\ref{prop:ff:regret}.
Let $V^{\FF}(t^*)$ denote the revenue of  Algorithm~\ref{alg:ff} with start time $t_1=1$, end time $t_2= t^*$ and initial capacity $C$.

\begin{proposition} \label{prop:ff+:regret}
With probability at least $1-\frac{1}{K}$ where $K = \Omega(T)$, we have
\begin{equation}
    \frac{t^*}{T} V^{\DLP} - V^{\FF}(t^*) \leq \left(\rho_2 \frac{C_\text{max}}{C_\text{min}} + \rho_3\right) \sqrt{t^*}(\log K)^\frac{1}{2} + \left(\rho_1  \frac{C_\text{max}^2}{C_\text{min} T} + \rho_0 \frac{C_\text{max}}{C_\text{min}} \right) \sqrt{t^*} + \rho_4 \frac{C_\text{max}}{C_\text{min}},
\end{equation}
where $\rho_0, \rho_1, \ldots, \rho_4$ do not depend on $C_\text{max}$, $C_\text{min}$, $T$ or $K$. 
\end{proposition}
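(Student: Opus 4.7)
The plan is to combine the martingale identity from Lemma~\ref{lm:martingale} with the OGD bound of Lemma~\ref{lm:OGD'} and the high-probability estimate of Lemma~\ref{lm:Mt}, choosing the free scalar $\alpha$ in Lemma~\ref{lm:OGD'} so that an $\alpha\frac{\tau-1}{T}C_I$ term cancels a contribution produced by weak duality.

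First, I would express the algorithm's revenue through the martingale. Since Algorithm~\ref{alg:ff} stops at the beginning of period $\tau$, the collected revenue is $V^{\FF}(t^*) = \sum_{s=1}^{\tau-1} y_s r_{j(s)}$, and rearranging the definition of $M_{\tau-1}$ in \eqref{eq:martingale} gives
\[ V^{\FF}(t^*) = \sum_{s=1}^{\tau-1} \frac{\LP(\theta^{(s)})}{T} - \sum_{s=1}^{\tau-1} g_s(\theta^{(s)}) - M_{\tau-1}. \]
By weak duality, $\LP(\theta^{(s)}) \geq V^{\DLP}$ for every $s$, hence
\[ \frac{(\tau-1) V^{\DLP}}{T} - V^{\FF}(t^*) \leq \sum_{s=1}^{\tau-1} g_s(\theta^{(s)}) + M_{\tau-1}. \]

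Second, I would apply Lemma~\ref{lm:OGD'} with $\alpha = V^{\DLP}/C_I$, where $I$ is the (random) depleted resource index supplied by the lemma when $\tau-1<t^*$ and any fixed index otherwise. By Lemma~\ref{lm:alpha}, $V^{\DLP}/C_I \leq V^{\DLP}/C_{\min} \leq \bar\theta$, so this $\alpha$ is admissible. With this choice $\alpha C_I = V^{\DLP}$, so the first two terms of Lemma~\ref{lm:OGD'} become $\frac{(\tau-1)V^{\DLP}}{T} - \frac{t^* V^{\DLP}}{T}$, while $\alpha \bar a \leq \bar a \bar \theta = \bar a (\sum_i \bar\alpha_i) \frac{C_{\max}}{C_{\min}}$ contributes the $\rho_4 C_{\max}/C_{\min}$ term. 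After substitution the $(\tau-1)V^{\DLP}/T$ contributions on the two sides cancel, yielding
\[ \frac{t^* V^{\DLP}}{T} - V^{\FF}(t^*) \leq \rho_4 \frac{C_{\max}}{C_{\min}} + \left(\rho_1 \frac{C_{\max}^2}{C_{\min} T} + \rho_0 \frac{C_{\max}}{C_{\min}}\right)\sqrt{t^*} + M_{\tau-1}. \]

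Third, I would control $M_{\tau-1}$ via Lemma~\ref{lm:Mt}: with probability at least $1-1/K$ the bound $M_{\tau-1} \leq (\rho_2 C_{\max}/C_{\min} + \rho_3)\sqrt{\tau-1}(\log K)^{1/2}$ holds, and $\sqrt{\tau-1} \leq \sqrt{t^*}$ produces the remaining term in the statement. The main subtlety is the randomness of $\tau$ and of the depleted index $I$: one must note that Lemma~\ref{lm:OGD'} supplies its estimate almost surely for the sample-path-determined $I$, and that the ``for all $t$'' form of Lemma~\ref{lm:Mt} already covers the stopping-time index $\tau-1$ without needing an additional union bound. Beyond this bookkeeping, the proof is pure cancellation and substitution.
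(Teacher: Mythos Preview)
Your proposal is correct and follows the same route as the paper: combine weak duality, the OGD estimate of Lemma~\ref{lm:OGD'}, and the high-probability martingale bound of Lemma~\ref{lm:Mt}, then cancel the $(\tau-1)V^{\DLP}/T$ contributions. The only cosmetic difference is that the paper fixes $\alpha=\bar\theta$ and invokes the inequality $\bar\theta\,C_I\ge V^{\DLP}$ from Lemma~\ref{lm:alpha} to absorb the $\alpha\frac{\tau-1}{T}C_I-\alpha\frac{t^*}{T}C_I$ terms, whereas you pick the sample-path-dependent $\alpha=V^{\DLP}/C_I$ (admissible by the same lemma) to make the cancellation exact; both choices produce the identical constant $\rho_4=\bar a\sum_i\bar\alpha_i$.
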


\begin{proof}{Proof of  Proposition~\ref{prop:ff+:regret}.}

By Proposition~\ref{lm:Mt} and definition of $M_t$, we have with probability at least $1-\frac{1}{K}$ that
\begin{equation}\label{eq:prop3:1}
    \sum_{s=1}^t \left[ \frac{\mathsf{LP}(\theta^{(s)})}{T} - y_s r_{j(s)} - g_s(\theta^{(s)})\right]
    \leq \left(\rho_2 \frac{C_\text{max}}{C_\text{min}} + \rho_3\right) \sqrt{\tau-1}(\log K)^{\frac{1}{2}}.
\end{equation}
By Lemma \ref{lm:OGD'}, we know there exists some random variable $I \in [m]$ such that
\begin{equation}\label{eq:prop3:2}
\sum_{t=1}^{\tau-1} g_t(\theta^{(t)}) \leq \alpha \frac{\tau-1}{T} C_I - \alpha\frac{t^*}{T} C_I  + \alpha \bar{a} + \left(\rho_1  \frac{C_\text{max}^2}{C_\text{min} T} + \rho_0 \frac{C_\text{max}}{C_\text{min}} \right) \sqrt{t^*}
\end{equation}
almost surely, for any $\alpha \in [0,  \bar{\theta}] $. 

Recall that $\mathsf{LP}(\theta)$ is the Lagrangian relaxation defined in Equation \eqref{eq:dualDLP}.
By weak duality, we have $\mathsf{LP}(\theta) \geq V^{\text{DLP}}$ for all $\theta \geq 0$.
Combining this with \eqref{eq:prop3:1} and \eqref{eq:prop3:2}, we can derive that with probability at least $1-\frac{1}{K}$ that 
\begin{align*}
    \frac{\tau-1}{T} V^{\text{DLP}} &\leq  \sum_{t=1}^{\tau-1} \frac{\mathsf{LP}(\theta^{(t)})}{T} \\
    &\leq \sum_{t=1}^{\tau-1} y_t r_{j(t)} + \sum_{t=1}^{\tau-1} g_t(\theta^{(t)})  + \left(\rho_2 \frac{C_\text{max}}{C_\text{min}} + \rho_3\right) \sqrt{t^*} (\log K)^{\frac{1}{2}} \\
    &\leq \sum_{t=1}^{\tau-1} y_t r_{j(t)} + \alpha \frac{\tau-1}{T} C_I - \alpha \frac{t^*}{T} C_I + \alpha \bar{a}  + \left(\rho_1  \frac{C_\text{max}^2}{C_\text{min} T} + \rho_0 \frac{C_\text{max}}{C_\text{min}} \right) \sqrt{t^*} + \left(\rho_2 \frac{C_\text{max}}{C_\text{min}} + \rho_3\right) \sqrt{t^*} (\log K)^{\frac{1}{2}}
\end{align*}
for any $\alpha \in [0, \bar{\theta}] $.
Now we set $\alpha = \bar{\theta}$. By Lemma~\ref{lm:alpha}, we know $ \alpha \cdot C_I = \bar{\theta} \cdot C_I \geq V^{\DLP}$.
We continue to derive
\begin{align*}
& \frac{\tau-1}{T} V^{\text{DLP}} \\
\leq & \sum_{t=1}^{\tau-1} y_t r_{j(t)} + \bar{\theta} \frac{\tau-1}{T} C_I - \bar{\theta} \frac{t^*}{T} C_I + \bar{\theta} \bar{a}  + \left(\rho_1  \frac{C_\text{max}^2}{C_\text{min} T} + \rho_0 \frac{C_\text{max}}{C_\text{min}} \right) \sqrt{t^*} + \left(\rho_2 \frac{C_\text{max}}{C_\text{min}} + \rho_3\right) \sqrt{t^*} (\log K)^{\frac{1}{2}}\\
\leq & \sum_{t=1}^{\tau-1} y_t r_{j(t)} - \bar{\theta}  C_I \left( \frac{t^*}{T} - \frac{\tau-1}{T} \right) + \bar{\theta} \bar{a}  + \left(\rho_1  \frac{C_\text{max}^2}{C_\text{min} T} + \rho_0 \frac{C_\text{max}}{C_\text{min}} \right) \sqrt{t^*} + \left(\rho_2 \frac{C_\text{max}}{C_\text{min}} + \rho_3\right) \sqrt{t^*} (\log K)^{\frac{1}{2}}\\
\leq & \sum_{t=1}^{\tau-1} y_t r_{j(t)} - V^\DLP \left( \frac{t^*}{T} - \frac{\tau-1}{T} \right) + \bar{\theta} \bar{a}  + \left(\rho_1  \frac{C_\text{max}^2}{C_\text{min} T} + \rho_0 \frac{C_\text{max}}{C_\text{min}} \right) \sqrt{t^*} + \left(\rho_2 \frac{C_\text{max}}{C_\text{min}} + \rho_3\right) \sqrt{t^*} (\log K)^{\frac{1}{2}}\\
\leq & V^\FF(t^*) - V^\DLP \left( \frac{t^*}{T} - \frac{\tau-1}{T} \right) + \bar{\theta} \bar{a}  + \left(\rho_1  \frac{C_\text{max}^2}{C_\text{min} T} + \rho_0 \frac{C_\text{max}}{C_\text{min}} \right) \sqrt{t^*} + \left(\rho_2 \frac{C_\text{max}}{C_\text{min}} + \rho_3\right) \sqrt{t^*} (\log K)^{\frac{1}{2}}.
\end{align*}

\begin{align*}
\Longrightarrow  \frac{t^*}{T} V^\DLP - V^\FF(t^*) \leq  \bar{\theta} \bar{a}  + \left(\rho_1  \frac{C_\text{max}^2}{C_\text{min} T} + \rho_0 \frac{C_\text{max}}{C_\text{min}} \right) \sqrt{t^*} + \left(\rho_2 \frac{C_\text{max}}{C_\text{min}} + \rho_3\right) \sqrt{t^*} (\log K)^{\frac{1}{2}}.
\end{align*}

Finally, we set $\rho_4 = \sum_{i=1}^m \bar{\alpha_i} \bar{a}$ to obtain
\[ \bar{\theta}\bar{a}  = \frac{C_{\max}}{C_{\min}} \sum_{i=1}^m \bar{\alpha_i} \bar{a} = \frac{C_{\max}}{C_{\min}} \rho_4, \]
which completes the proof.

\halmos

\end{proof}

Proposition~\ref{prop:ff:regret} follows from 
Proposition~\ref{prop:ff+:regret}, where $t^*=T$, $K =T$ and $V^{\FF}(T) = V^{\FF}$.


\subsection{Lemma~\ref{lm:asmp:1}}

Recall the definition of $\LP(b,v)$ in \eqref{eq:lp_b_u}:
\begin{align*}
\begin{split}
  \LP(b,v) := \max_{z,\varepsilon} \;\;& \sum_{j=1}^n r_j z_j  \\
  \text{s.t. } & \sum_{j=1}^n A_j z_j + \varepsilon_i = b \\
  & 0 \leq z_j  \leq v_j,\;\; \forall j \in [n]\\
  & \varepsilon_i \geq 0, \;\; \forall i \in [m],
\end{split}
\end{align*}
and matrix $\bar{A} = [A_1, \ldots, A_n, \mathbf{e}_1, \ldots, \mathbf{e}_m] $, vector $\bar{r} = [r_1,\ldots,r_n, 0, \ldots, 0]^\top \in \real^{m+n}$.


\begin{proof}{Proof of Lemma~\ref{lm:asmp:1}.}
Let $\tilde{w}^*= (z^*_1, \ldots, z^*_n, \varepsilon_1^*, \ldots, \varepsilon_m^*)$ be an optimal solution to $\LP(b,v)$ where $\varepsilon^*_i = b - \sum_{j=1}^n A_j z^*_j$ for $i\in[m]$. 

First, suppose $\tilde{w}^*$ is a basic feasible solution (BFS). Let $B \subset [m+n]$ be the set of indices for the basic variables, and $N := [m+n] - B$ the set of indexes for the non-basic variables. By definition, we know $|B|=m$ given there exists $m$ constraints in \eqref{eq:lp_b_u}. In addition, the \emph{reduced cost} of the basic variables is zero, i.e., we have
\[
\bar{r}_l - \bar{r}_B^\top \bar{A}_B^{-1} \bar{A}_l = 0
\text{ for } l\in B.
\]
\end{proof}

Consider moving from $\tilde{w}^*$ along the $k$-th \emph{basic direction} $d_k$ where $k\in N$. By definition, $\bar{A} (\tilde{w}^* + d) = b$, which implies $\bar{A} d = \bar{A}_B d_B + \bar{A}_N d_N = 0$. In particular, we have two types of basic feasible directions.

\begin{itemize}

\item $d_N = \mathbf{e}_k$ where $k$ corresponds to the index of non-basic variable $z^*_j = 0$ or $\varepsilon^*_i = 0$. In this case, $d_B = \bar{A}_B^{-1} \bar{A}_k$, and moving one unit along $d_k$ results in a change of revenue $\bar{r}_k - \bar{r}_B^\top \bar{A}_B^{-1}\bar{A}_k$;

\item $d_N = -\mathbf{e}_k$ where $k$ corresponds to the index of non-basic variable $z^*_j = u_j$. In this case, $d_B = \bar{A}_B^{-1} \bar{A}_k$, and moving one unit along $d_k$ results in a change of revenue $-(\bar{r}_k - \bar{r}_B^\top \bar{A}_B^{-1}\bar{A}_k )$.

\end{itemize}

Define bid price vector $p := \bar{r}_B^\top \bar{A}_B^{-1} \in \real^m$. Under Assumption~\ref{asmp:1}, we know there are at most $m$ columns such that $\bar{r}_k - p \bar{A}_k = 0$. 
Since we already have $\bar{r}_l - p \bar{A}_l = 0$ for $l\in B$ and $|B| =m$, we know $\bar{r}_k - p \bar{A}_k \neq 0$ for all $k\in N$. In addition, since $\tilde{w}^*$ is the optimal solution, we know all basic directions $d_k$ are strict gradient descent directions, namely, 
\[
\bar{r}^\top d_k < 0.
\]
In particular, we have $\bar{r}_k - \bar{r}_B^\top \bar{A}_B^{-1}\bar{A}_k < 0$ for $d_N = \mathbf{e}_k$, and $\bar{r}_B^\top \bar{A}_B^{-1}\bar{A}_k - \bar{r}_k < 0$ for $d_N = -\mathbf{e}_k$. Thus, we know $\tilde{w}^*$ is unique.

Second, suppose $\tilde{w}^*$ is not a BFS. Then following the same argument, we know there exists a BFS optimal solution that is unique, which results in a violation.


Given any selection of basis index $B$ and $p=\bar{r}_B^\top \bar{A}_B^{-1}$, define $\delta(p) := - \min_{k \in N} |\bar{r}_k - p\bar{A}_k| $, i.e., the maximum value of $\bar{r}^\top d_k$ for $k\in N$. 
Since there are finitely many selections of basis, we know there exists a constant $\delta_{\min} < 0$ such that $\delta(p) \leq \delta_{\min}$ for any price vector $p= \bar{r}_B^\top \bar{A}_B^{-1}$.


Now consider any feasible solution $\tilde{w}$ of \eqref{eq:lp_b_u}, and direction $d=\frac{ \tilde{w} - \tilde{w}^*}{\Vert  \tilde{w} - \tilde{w}^* \Vert}$. 
Given the basic direction $d_k \in \real^{m+n}$ for $k\in N$ associated with optimal solution $\tilde{w}^*$, we know $d$ can be written as a convex combination of $d_k$, namely, there exist constants $c_k \geq 0$ for $k \in N$ such that 
\[ d = \sum_{k \in N} c_k d_k .\]

We have
\begin{equation*}
    \bar{r}^\top d =  \bar{r}^\top \sum_{k \in N} c_k d_k =  \sum_{k \in N} c_k (\bar{r}^\top d_k) \leq \sum_{k \in N} c_k \delta_{\min},
\end{equation*}
which is guaranteed to be smaller than a fixed constant.

\halmos


\subsection{Proposition~\ref{prop:ff:x}} 
Now we prove a lemma and a proposition that generalize the results of Proposition~\ref{prop:ff:x}.


Let $x^{\FF}_j$ denote the number of type-$j$ customers accepted by Algorithm~\ref{alg:ff} with start time $t_1=1$, end time $t_2= t^*$ and initial capacity $C$.

Let $z^*$ denote an optimal solution to
\[
\max_{z} \left\{\sum_{j=1}^n r_j z_j \mid \sum_{j=1}^n A_j z_j \leq C, \ 0 \leq z_j \leq \lambda_j t^*, \  \forall j \in [n] \right\}
\]

Let $S=\cap_{j=1}^n \{z  \mid 0 \leq z_j \leq \lambda_j t^* \}$, and $Q=\{z \mid \sum_{j=1}^n A_j z_j \leq \frac{t^*}{T} C\}$.

\begin{lemma}\label{lm:ff+:x}
Under Assumption~\ref{asmp:1}, 
if $x \in S \cap Q$ and satisfies
\[
\sum_{j=1}^n  r_j z^*_j - \sum_{j=1}^n  r_j x_j = O( \sqrt{T}(\log K)^{\frac{1}{2}})
\] 
where $K=\Omega(T)$,
then we have
$\Vert x - z^*\Vert =O( \sqrt{T}(\log K)^{\frac{1}{2}})$.
\end{lemma}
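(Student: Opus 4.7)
\textbf{Proof plan for Lemma~\ref{lm:ff+:x}.}

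The plan is to reduce the statement to a direct application of the geometric estimate in Lemma~\ref{lm:asmp:1}. First, observe that $x \in S \cap Q$ is in fact feasible for the LP defining $z^*$: since $t^*\le T$, we have $Q = \{z:\sum_{j} A_j z_j \leq \tfrac{t^*}{T} C\} \subseteq \{z:\sum_{j} A_j z_j \leq C\}$, while membership in $S$ is identical to the box constraints for $z^*$. Thus both $z^*$ and $x$ can be viewed as feasible points of the standard-form problem \eqref{eq:lp_b_u} with $b = C$ and $v_j = \lambda_j t^*$, once we append the natural slack variables $\varepsilon^* := C - \sum_j A_j z^*_j$ and $\varepsilon_x := C - \sum_j A_j x_j$. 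Denote the augmented vectors by $\tilde z^* = [z^*, \varepsilon^*]$ and $\tilde x = [x, \varepsilon_x]$.

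Next, invoke Lemma~\ref{lm:asmp:1}. Under Assumption~\ref{asmp:1}, $\tilde z^*$ is the unique optimal solution, and inspection of the basic-direction argument in its proof shows that the ``upper-bounded by a constant'' clause can be sharpened: there is a strictly negative constant $-\delta < 0$, depending only on $A$, $r$ and $\lambda$ (not on $T$ or $K$), such that for every feasible $\tilde w \neq \tilde z^*$ the unit direction $d := (\tilde w - \tilde z^*)/\|\tilde w - \tilde z^*\|$ satisfies $\bar r^\top d \leq -\delta$. This is the sharp error bound that the assumption is designed to give.

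Applying this to $\tilde x$ and using $\bar r^\top \tilde x = r^\top x$, $\bar r^\top \tilde z^* = r^\top z^*$ (since $\bar r$ vanishes on the slack coordinates), we obtain
\[
 r^\top x - r^\top z^* \;=\; \bar r^\top(\tilde x - \tilde z^*) \;\leq\; -\delta\,\|\tilde x - \tilde z^*\| \;\leq\; -\delta\, \|x - z^*\|,
\]
where the final inequality uses $\|\tilde x - \tilde z^*\| \geq \|x - z^*\|$. Rearranging and invoking the hypothesis $\sum_j r_j z^*_j - \sum_j r_j x_j = O(\sqrt{T}(\log K)^{1/2})$ yields $\|x - z^*\| \leq \delta^{-1}(r^\top z^* - r^\top x) = O(\sqrt{T}(\log K)^{1/2})$, as claimed. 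The case $x = z^*$ is trivial.

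The main obstacle I anticipate is the passage from the qualitative ``upper-bounded by a constant'' statement in Lemma~\ref{lm:asmp:1} to the quantitative strict negativity $-\delta < 0$ used above; this is what promotes the weak reduced-cost inequality into a \emph{sharp} primal recovery bound, so one must be careful to extract it uniformly over the finitely many choices of basis, exactly as in the last steps of the proof of Lemma~\ref{lm:asmp:1}. Everything else is bookkeeping: checking feasibility of $x$ in the $z^*$-LP, appending slacks, and using that $\|x-z^*\|$ is dominated by the augmented norm.
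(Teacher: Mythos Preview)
Your proposal is correct and follows essentially the same route as the paper: embed $x$ and $z^*$ as slack-augmented feasible points of the standard-form LP \eqref{eq:lp_b_u}, invoke Lemma~\ref{lm:asmp:1} to get a uniform strictly negative bound on $\bar r^\top d$ (from the finitely many bases, independent of $T,K$), and convert this into $\|x-z^*\|\le\delta^{-1}(r^\top z^*-r^\top x)$. Your explicit flagging of the sharpening from ``upper-bounded by a constant'' to ``$\le -\delta<0$'' is exactly the point that does the work, and the paper uses it in the same way (calling the constant $\Delta_{\min}$).

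One minor difference worth noting: you instantiate \eqref{eq:lp_b_u} with $b=C$, whereas the paper's proof uses $b=\tfrac{t^*}{T}C$. Your choice is the one that is consistent with the stated definition of $z^*$ (which has capacity $C$), and it makes feasibility of $x$ immediate via $Q\subseteq\{z:\sum_j A_j z_j\le C\}$. With the paper's choice $b=\tfrac{t^*}{T}C$, one would additionally need $z^*\in Q$ to append nonnegative slacks, which is not guaranteed by the definition as written; so your variant is in fact cleaner here. Either way the constant $\delta$ depends only on $\bar A,\bar r$ (the reduced costs do not see $b$ or $v$), so independence from $T,K$ holds in both versions.
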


\begin{proof}{Proof of Lemma~\ref{lm:ff+:x}.}

Consider $\LP(b,v)$ with capacity vector $b=\frac{t^*}{T} C$ and upper bound vector $v$ where $v_j=\lambda_j t^*$ for $j\in[n]$.
Define $\varepsilon^*_i = b - \sum_{j=1}^n A_j z^*_j$ for $i\in[m]$, and $\tilde{w}^*= (z^*_1, \ldots, z^*_n, \varepsilon_1^*, \ldots, \varepsilon_m^*)$.
By definition, we know $\tilde{w}^*$ is an optimal solution to $\LP(\frac{t^*}{T} C,\lambda t^*)$. 
By Lemma~\ref{lm:asmp:1}, we know $\tilde{w}^*$ is unique, and is a BFS of the LP.


If $x \in S \cap Q$, we define $\tilde{w}=(x_1, \ldots, x_n, \varepsilon_1, \ldots, \varepsilon_m)$, where $ \varepsilon_i=\frac{t^*}{T} C_i - \sum_{j=1}^n A_j x_j$ for $i\in[m]$. We know $\tilde{w}$ is a feasible solution to $\LP(\frac{t^*}{T} C,\lambda t^*)$.
Now define direction $d = \frac{\tilde{w}- \tilde{w}^*}{\Vert \tilde{w}-\tilde{w}^* \Vert}$. By Lemma~\ref{lm:asmp:1}, we know 
$\bar{r}^\top d$ is guaranteed to be smaller than a fixed constant, denoted as $\Delta_{\min}$.



Since $\bar{r}^\top \tilde{w}^* - \bar{r}^\top \tilde{w} = - \bar{r}^\top d \ \Vert \tilde{w}^* - \tilde{w} \Vert $, we know 
\[ 
 \Vert \tilde{w}^* - \tilde{w} \Vert  = - \frac{\bar{r}^\top \tilde{w}^* - \bar{r}^\top \tilde{w}}{\bar{r}^\top d }
\leq \Delta_{\min} (\bar{r}^\top \tilde{w}^* - \bar{r}^\top \tilde{w}).
\]

Since $\sum_{j=1}^n  r_j z^*_j - \sum_{j=1}^n  r_j x_j = O( \sqrt{T}(\log K)^{\frac{1}{2}})$, we know there exist constants $a$ and $T_0$ such that for all $T\geq T_0$, we have 
$\sum_{j=1}^n  r_j z^*_j - \sum_{j=1}^n  r_j x_j \leq a \sqrt{T}(\log K)^{\frac{1}{2}}$. This implies 
$\bar{r}^\top \tilde{w}^* - \bar{r}^\top \tilde{w} \leq a \sqrt{T}(\log K)^{\frac{1}{2}}$, and hence
\[
 \Vert \tilde{w}^* - \tilde{w} \Vert \leq \Delta_{\min}a \sqrt{T}(\log K)^{\frac{1}{2}}.
\]

By definition, we know $\Vert x - z^* \Vert \leq \Vert \tilde{w} - \tilde{w}^* \Vert$. Therefore,
\[
\Vert x - z^* \Vert \leq \Delta_{\min}a \sqrt{T}(\log K)^{\frac{1}{2}} = O(\sqrt{T}(\log K)^{\frac{1}{2}}).
\]

\end{proof}


\begin{proposition} \label{prop:ff+:x}
Under Assumption~\ref{asmp:1}, with probability at least $1-O(\frac{1}{K})$ where $K=\Omega(T)$, we have
\begin{equation}\label{eq:ff+:x}
    |x^{\FF}_j - z_j^*| \leq \rho \sqrt{T}(\log K)^\frac{1}{2} 
\end{equation}
for all $j\in[n]$, where $\rho$ does not depend on $t^*$, $T$ or $K$.
\end{proposition}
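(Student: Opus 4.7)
The plan is to invoke the geometric Lemma~\ref{lm:ff+:x} with $x$ essentially equal to $x^{\FF}$, after verifying the two hypotheses of that lemma: feasibility in $S\cap Q$ and a near-optimal objective value. Proposition~\ref{prop:ff+:regret} is the key input: it guarantees that, with probability at least $1-1/K$,
\[
\frac{t^*}{T}V^{\DLP}-V^{\FF}(t^*)\leq \Big(\rho_2\tfrac{C_{\max}}{C_{\min}}+\rho_3\Big)\sqrt{t^*}(\log K)^{1/2}+\Big(\rho_1\tfrac{C_{\max}^2}{C_{\min}T}+\rho_0\tfrac{C_{\max}}{C_{\min}}\Big)\sqrt{t^*}+\rho_4\tfrac{C_{\max}}{C_{\min}},
\]
which (since $t^*\leq T$ and $K=\Omega(T)$) is $O(\sqrt{T}(\log K)^{1/2})$. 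I would then identify $\tfrac{t^*}{T}V^{\DLP}$ with $\sum_j r_j z_j^*$ via the natural scaling between the full-horizon DLP and its time-$t^*$ restriction (this is where the consistency between the capacity $C$ in the proposition and $\tfrac{t^*}{T}C$ in the lemma setup must be reconciled: the restricted DLP with capacity $\tfrac{t^*}{T}C$ and upper bound $\lambda_j t^*$ has value $\tfrac{t^*}{T}V^{\DLP}$). This immediately yields $\sum_j r_j z_j^*-\sum_j r_j x^{\FF}_j = O(\sqrt{T}(\log K)^{1/2})$ on the same high-probability event.

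Next I would certify feasibility. By construction, Algorithm~\ref{alg:ff} enforces $\sum_j A_j x^{\FF}_j\leq C$ and trivially $0\leq x^{\FF}_j\leq \Lambda_j(1,t^*)$. To put $x^{\FF}$ into $S\cap Q$ (where $Q$ uses capacity $\tfrac{t^*}{T}C$), I would apply a standard concentration argument: Hoeffding's inequality shows that $\Lambda_j(1,t^*)\leq \lambda_j t^*+O(\sqrt{t^*\log K})$ for every $j$ with probability at least $1-O(1/K)$, and a similar concentration for the resource usage compared to its conditional mean shows $\sum_j A_j x^{\FF}_j \leq \tfrac{t^*}{T}C+O(\sqrt{t^*\log K})$ on a further high-probability event. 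Projecting $x^{\FF}$ onto $S\cap Q$ (e.g., by a uniform multiplicative shrinkage) produces a point $\tilde{x}\in S\cap Q$ with $\|\tilde{x}-x^{\FF}\|=O(\sqrt{T}(\log K)^{1/2})$, and correspondingly the revenue gap between $\tilde x$ and $z^*$ is still $O(\sqrt{T}(\log K)^{1/2})$ since revenues are bounded.

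With both hypotheses in place, Lemma~\ref{lm:ff+:x} applied to $\tilde x$ yields $\|\tilde{x}-z^*\|=O(\sqrt{T}(\log K)^{1/2})$. Combining with the projection bound and using $|x^{\FF}_j-z^*_j|\leq \|x^{\FF}-z^*\|_2$ gives the claimed per-coordinate bound $|x^{\FF}_j-z_j^*|\leq \rho\sqrt{T}(\log K)^{1/2}$ for every $j\in[n]$, with $\rho$ depending only on model primitives $A$, $r$, $\lambda$, $m$, $n$ and on $C_{\max}/C_{\min}$, hence independent of $t^*$, $T$, $K$.

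The main obstacle is the projection step in the feasibility argument: one must produce a projection that simultaneously (a) lies in $S\cap Q$, (b) is within $O(\sqrt{T}(\log K)^{1/2})$ of $x^{\FF}$ in Euclidean norm, and (c) preserves the revenue near-optimality. A direct coordinatewise truncation to $[0,\lambda_j t^*]$ handles $S$, but forcing membership in $Q$ (which uses the stronger capacity $\tfrac{t^*}{T}C$) requires care because the natural worst-case shrinkage factor could in principle erode the revenue gap; the argument must therefore lean on the concentration bound showing that the slack $\tfrac{t^*}{T}C-\sum_j A_j x^{\FF}_j$ is only $O(\sqrt{t^*\log K})$ negative (rather than $\Theta(T)$), so that only an $O(\sqrt{T}(\log K)^{1/2})$ adjustment suffices and Lemma~\ref{lm:ff+:x} applies unchanged.
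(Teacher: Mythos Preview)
Your outline matches the paper's proof almost step for step: invoke Proposition~\ref{prop:ff+:regret} to get the revenue gap $\sum_j r_j z_j^* - \sum_j r_j x^{\FF}_j = O(\sqrt{T}(\log K)^{1/2})$ on an event of probability $1-O(1/K)$, use Hoeffding on the arrival counts to enlarge $S$ to $S'$, project onto $S\cap Q$, apply Lemma~\ref{lm:ff+:x}, and finish with the triangle inequality. The paper's projection is additive (it subtracts $\sqrt{T}(\log K)^{1/2}$ from every coordinate) rather than the multiplicative shrinkage you suggest, but the effect is the same.

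The one point of genuine divergence is exactly the obstacle you flag. The paper does \emph{not} supply a separate concentration argument for the resource usage; it simply splits into the cases $x^{\FF}\in S\cap Q$ and $x^{\FF}\in(S'\setminus S)\cap Q$ and asserts, in the second case, that $x^{\FF}-\sqrt{T}(\log K)^{1/2}\mathbf{e}\in S\cap Q$. In other words, membership in $Q$ is taken for granted rather than proved. Your instinct that this needs justification is correct, but the specific mechanism you propose (``concentration for the resource usage compared to its conditional mean'') does not directly yield $\sum_j A_j x^{\FF}_j \le \tfrac{t^*}{T}C + O(\sqrt{t^*\log K})$: the conditional mean of $y_t a_{i,j(t)}$ given $\mathcal{H}_{t-1}$ is $\sum_j \lambda_j a_{ij}\,\mathbf{1}(r_j>\theta^{(t)}\!\cdot A_j)$, which depends on the evolving dual iterate and is not equal to $C_i/T$. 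A rigorous bound on the resource overshoot would have to come from the OGD regret inequality itself (evaluated at $\theta=\bar\theta\,\mathbf{e}_i$) combined with the martingale bound, not from an Azuma-type concentration alone. So on this specific sub-step your sketch is no more and no less complete than the paper's own argument.
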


\begin{proof}{Proof of Proposition~\ref{prop:ff+:x}.}




Given $K = \Omega(T)$, define $\mu(t)=\sqrt{\frac{t}{2}} (\log K)^\frac{1}{2}$. 
Let $Y_j$ denote the event $\big\{\Lambda_j(t^*) \leq \lambda_j t^* +\mu(t^*) \big\} $. 
Since $\Lambda_j(t^*)$ is a binomial random variable with mean $\lambda_j t^*$.
By Hoeffding's inequality, we have
\[
P(Y_j) = P\big(\Lambda_j(t^*) \leq \lambda_j t^* +\mu(t^*) \big) \geq 1- \exp \bigg(- \frac{2\mu^2(t^*)}{t^*} \bigg) 
\geq 1 - \frac{1}{K}.
\]
Define $S^{'}  = \cap_{j=1}^n \big\{x \mid 0 \leq x_j \leq \lambda_j t^*+ \mu_j(t^*) \big\}$.
Let $Z$ denote the event $\big\{\frac{t^*}{T} V^{\DLP}- V^{\FF} \leq \rho_2 \sqrt{t^*} (\log K)^{\frac{1}{2}} + \rho_1 \sqrt{t^*} \big\}$.
By Proposition \ref{prop:ff+:regret}, we know that $P(Z) \geq 1- 1/K$. 
Therefore, the joint event $(\cap_{j\in[n]} Y_j )\cap Z$ happens with probability at least
\[
P(\cap_{j\in[n]} Y_j \cap Z) = 1 - P(\cup_{j\in[n]} Y_j^\complement \cup Z^\complement) 
\geq 1- \sum_{j\in[n]} P(Y_j^\complement) - P(Z^\complement) = 1 - O(\frac{1}{K}),
\]
where the inequality follows by the union bound.

Consequently, it suffices to prove Proposition \ref{prop:ff+:x} conditional on $(\cap_{j\in[n]} Y_j )\cap Z$. We consider two cases: 
\[
\text{1) } x^{\FF} \in S \cap Q ,
\text{ and 2) } x^{\FF} \in (S'-S) \cap Q ,
\]
where we define $S' =\cap_{j=1}^n \{x \mid 0 \leq x_j \leq \lambda_j T + \sqrt{T}(\log K)^\frac{1}{2} \}$. 


In case 1) where $x^{\FF} \in S \cap Q$, we have $\sum_{j=1}^n  r_j z^*_j - \sum_{j=1}^n  r_j x^{\FF}_j = O(\sqrt{T}(\log K)^{\frac{1}{2}})$
conditioned on event $Z$.
By Lemma~\ref{lm:ff+:x}, we know immediately $\Vert x^{\FF} - z^*\Vert =  O(\sqrt{T}(\log K)^{\frac{1}{2}})$.

In case 2) where $x^{\FF} \in (S'-S) \cap Q$, we first project $x^{\FF}$ onto $S \cap Q$.
Let $x'$ be the projection of $x^{\FF}$ onto $S \cap Q$. Since $x^{\FF} - \sqrt{T}(\log K)^{\frac{1}{2}} \mathbf{e} \in S \cap Q$, where $\mathbf{e}$ denotes a vector of ones in $\real^n$ , by the definition of projection, we know
\begin{equation}\label{eq:ff+:x0}
\Vert x^{\FF} - x' \Vert \leq 
\Vert x^{\FF} - (x^{\FF} - \sqrt{T}(\log K)^{\frac{1}{2}} \mathbf{e}) \Vert 
= \sqrt{n} \sqrt{T}(\log K)^{\frac{1}{2}} ,
\end{equation}
which then implies
\begin{align}
\sum_{j=1}^n  r_j x^{\FF}_j - \sum_{j=1}^n  r_j x'_j
&= \sum_{j=1}^n r_j (x^{\FF}_j - x'_j)  \nonumber \\
&\leq 
\Vert r^\top (x^{\FF} - x') \Vert_1 \nonumber \\
&\leq 
\Vert r \Vert
\Vert x^{\FF} - x' \Vert 
= O(\sqrt{T}(\log K)^{\frac{1}{2}})\label{eq:ff+:x1},
\end{align}
where the second inequality is due to the Cauchy-Schwartz inequality.

Conditioned on $Z$, we have
$\sum_{j=1}^n  r_j z^*_j - \sum_{j=1}^n  r_j x^{\FF}_j = O(\sqrt{T}(\log K)^{\frac{1}{2}})$, which in combination with \eqref{eq:ff+:x1} shows 
$\sum_{j=1}^n  r_j z^*_j - \sum_{j=1}^n  r_j x'_j = O(\sqrt{T}(\log K)^{\frac{1}{2}})$. 
Since $x' \in S \cap Q$, by Lemma~\ref{lm:ff+:x}, we have $\Vert x' - z^*\Vert =  O(\sqrt{T}(\log K)^{\frac{1}{2}})$. 
By the triangle inequality, we have 
\[
\Vert x^{\FF} - z^*\Vert \leq 
\Vert x^{\FF} - x' \Vert + \Vert x' - z^*\Vert = O(\sqrt{T}(\log K)^{\frac{1}{2}}).
\]

To summarize, in both cases, we have $\Vert x^{\FF} - z^*\Vert= O(\sqrt{T}(\log K)^{\frac{1}{2}})$.
Proposition \ref{prop:ff+:x} follows since 
$|x_j^{\FF} - z_j^*| \leq \Vert x^{\FF} - z^*\Vert$ for any $j\in[n]$.

\halmos
\end{proof}

Proposition~\ref{prop:ff:x} follows from Proposition~\ref{prop:ff+:x} where $t^*=T$, $K=T$.

\newpage

\section{Proof for Algorithm~\ref{alg:fd}}\label{ap:fd}



\subsection{Theorem~\ref{thm:fd:regret}}
\begin{proof}{Proof of Theorem~\ref{thm:fd:regret}.}

Consider Algorithm~\ref{alg:fd} with technical parameters
\[
\alpha= \frac{3 \log(\log T)}{ 2 \log T}, \;
\beta = \frac{\log(\log T)}{ \log T}, \;
\gamma =  \frac{2\log(\log T)}{3 \log T} ,\;
\varepsilon = \frac{\log(\log T)}{ \log T}.
\]
Let $t_1':= l_1+1$ be the starting time period of phase II, and $t_2'=l_1+l_2+1$ the starting time period of phase III, where $l_1$, $l_2$ denote the lengths of phase I and phase II, respectively.

Let $V^{\AuxHO\text{-}\I}$, $V^{\AuxHO\text{-}\II}$ denote the revenue of the auxiliary algorithms $\AuxHO$-I and $\AuxHO$-II, respectively.
By definition, we have
\begin{align}
    \bE[V^{\HO} - V^{\FD}] &=
    \bE[V^{\HO} - V^{\AuxHO\text{-I}}]
    + \bE[V^{\AuxHO\text{-I}} - {V}^{\AuxHO\text{-II}}]
    + \bE[{V}^{\AuxHO\text{-II}} - V^{\FD}]
\end{align}

By Proposition~\ref{prop:fd:regret}, we know 
\[
\bE[V^{\HO} - V^{\AuxHO\text{-I}}] = O(T^a) = O(T^{\alpha}) = O((\log T)^{\frac{3}{2}}) , 
\]
\[
\bE[V^{\AuxHO\text{-I}} - {V}^{\AuxHO\text{-II}}] = 
O(Te^{-T^{\varepsilon}})=O(T\cdot \frac{1}{T}) = O(1) , 
\]
\[
\bE[{V}^{\AuxHO\text{-II}} - V^{\FD}]=
O(T^{\frac{3}{4}b+\frac{\varepsilon}{2}}) = O(T^{\frac{3}{8}} T^{\frac{3\beta}{4}+\frac{\varepsilon}{2}}) = O(T^{\frac{3}{8}} (\log T)^{\frac{5}{4}}) . 
\]

Therefore, the regret bound of the algorithm is $O(T^{\frac{3}{8}} (\log T)^{\frac{5}{4}}) $. 
\halmos

\end{proof}

\subsection{Proposition~\ref{prop:fd:regret}}

Recall Algorithm~\ref{alg:fd} with  technical parameters $\alpha$, $\beta$, $\gamma$ that 
satisfy $0<\alpha < \frac{1}{2}$,
$\frac{\alpha}{2} \leq \beta < \frac{1}{2}$ and $0 < \gamma < \frac{\alpha}{2}$.
The algorithm calculates $a = \alpha$, $b = \frac{1}{2}+\beta$ and $c = \frac{\alpha}{2} + \gamma$.
Let $t_1':= l_1+1$ be the starting time period of phase II, and $t_2'=l_1+l_2+1$ the starting time period of phase III, where $l_1=\lceil T^a \rceil$, $l_2=\lceil T-T^a-T^b \rceil$ denote the lengths of phase I and phase II, respectively. 


\begin{proof}{Proof of Proposition~\ref{prop:fd:regret}.}

We first show (1) $ \bE[V^{\HO} - V^{\AuxHO\text{-\normalfont{I}}}] = O(T^a)$.

By definition of the algorithm, we have
$ \bE[V^{\HO} - V^{\AuxHO\text{-\normalfont{I}}}] \leq \bE[V^{\HO} - V^{\HO}(t_1')]$, where $V^{\HO}(t_1')$ denotes the value of the hindsight LP over periods $t_1'$ to $T$ as shown in \eqref{eq:HO_t}. 

Since $\Lambda_j(l_1)$ for $j\in[n]$ is a binomial random variable with mean $\lambda_j l_1$, we apply Hoeffding's inequality to obtain
\begin{align*}
    \bE\left[(\Lambda_j(l_1) - 2\lambda_j l_1 )^{+}\right] 
    &= \int_0^{\infty} P\left(\Lambda_j(l_1) - 2\lambda_j l_1 \geq x\right) dx \\
    &= \int_0^{\infty} P\left(\Lambda_j(l_1) - \lambda_j l_1 \geq x+\lambda_j l_1 \right) dx \\
    &\leq \int_0^{\infty} 2 \exp\left(- \frac{2(x+\lambda_j l_1)^2}{l_1} \right) dx \\
    &\leq \int_0^{\infty} 2 \exp\left(- \frac{2(x+\lambda_j l_1) \lambda_j l_1}{l_1} \right) dx \\
    &= \frac{1}{\lambda_j} \exp(-2\lambda_j l_1^2).
\end{align*}

We then have
\begin{align*}
\bE\left[V^{\HO} - V^{\AuxHO\text{-\normalfont{I}}}\right]
&\leq \bE\left[\sum_{j\in[n]} r_j \Lambda_j(l_1)\right] \leq \bE\left[\sum_{j\in[n]} r_j \big(2\lambda_j l_1 + (\Lambda_j(l_1) - 2\lambda_j l_1 )^{+}\big)\right] \\
&\leq \sum_{j\in[n]} r_j \left( 
2\lambda_j l_1 +\frac{1}{\lambda_j}\exp(-2\lambda_j l_1^2) \right) = O(l_1) = O(T^a).
\end{align*}

We next show (2) $\bE[V^{\AuxHO\text{-\normalfont{I}}} - {V}^{\AuxHO\text{-\normalfont{II}}}]
= O( T e^{-T^{\varepsilon}}) $ given $0 < \varepsilon < 2\gamma$, $\varepsilon\leq \beta$ and 
$\varepsilon \geq \frac{\log (\log T)}{\log T}$.

Consider a variant of Algorithm~\ref{alg:fd} that does not include the thresholding conditions (Line 5 to Line 11). We refer to this algorithm as Algorithm~\ref{alg:fd}'. In other words, Algorithm~\ref{alg:fd}' simply runs the Algorithm~\ref{alg:ff} as a subroutine in Phase II.
Let $x_j(t_1',t_2'-1)$ and $x_j'(t_1',t_2'-1)$ denote the number of type-$j$ customers accepted by Algorithm~\ref{alg:fd} and Algorithm~\ref{alg:fd}', respectively, during $t_1',\ldots,t_2'-1$.

Observe that the virtual capacity vector $B'(t)$ defined in phase II describes the capacity updating procedure of Algorithm~\ref{alg:fd}'.
In addition, Algorithm~\ref{alg:fd} and Algorithm~\ref{alg:fd}' have identical dual updates until the subroutine of Algorithm~\ref{alg:ff} in Algorithm~\ref{alg:fd}' stops. 
Since customer types $j\in \calA$ uses an additional amount of capacities in the case of Algorithm~\ref{alg:fd}, we have
\begin{equation}\label{eq:fd:p2:1}
x_j(t_1',t_2'-1) \leq x_j'(t_1',t_2'-1) \text{  for  }
j \in (\calA\cup \calR)^{\complement} 
\end{equation}

Let $\bar{x}_j(t_1',t_2'-1)$ denote the number of type $j$ customers accepted by Algorithm~\ref{alg:fd} during $t_1',\ldots,t_2'-1$ if we were allowed to go over capacity limits $B(t)$. 
We then have 
\begin{equation}\label{eq:fd:p2:2}
x_j(t_1',t_2'-1) \leq \bar{x}_j(t_1',t_2'-1) \text{  for  }
j \in (\calA \cup \calR),
\end{equation}
\begin{equation} \label{eq:fd:p2:3}
    x_j'(t_1',t_2'-1) = \bar{x}_j(t_1',t_2'-1) \text{  for  }
j \in (\calA \cup \calR)^{\complement}.
\end{equation}


Recall that $\bar{z}_j(t_1')$ is the hindsight optimal solution to $\HO(t_1')$, defined in \eqref{eq:HO_t}.
Define events
\begin{equation}\label{eq:fd:p2:event}
    E_j = \big\{ \bar{x}_j(t_1',t_2'-1) \leq \bar{z}_j(t_1') \leq \bar{x}_j(t_1',t_2'-1) + \Lambda(t_2',T) \big\}, \;\;
    E = \bigcap_{j\in [n]} E_j.
\end{equation}
The definition of these events is similar to that in \cite{bumpensanti2020re}.

Observe that under event $E$, we have $\bar{x}_j(t_1',t_2'-1)\leq \bar{z}_j(t_1')$ for all $j\in[n]$, and thus 
\[
\sum_{j\in[n]} \bar{x}_j(t_1',t_2'-1)\leq \sum_{j\in[n]} \bar{z}_j(t_1')\leq 
B(t_1') =  \frac{T-l_1}{T} C .
\] 

Combining \eqref{eq:fd:p2:1}, \eqref{eq:fd:p2:2} and \eqref{eq:fd:p2:3}, we have
\begin{equation}
    \sum_{j\in[n]} x_j(t_1',t_2'-1)\leq 
    \sum_{j\in[n]} \bar{x}_j(t_1',t_2'-1)\leq B(t_1'),
\end{equation}
which means Algorithm~\ref{alg:fd} will not stop before time $t_2$ due to insufficient capacity $B(t)$. 
Therefore, conditioned on event $E$, we have 
$x_j(t_1',t_2'-1) = \bar{x}_j(t_1',t_2'-1)$ for all $j\in[n]$,
and hence
\begin{equation}
x_j(t_1',t_2'-1) \leq \bar{z}_j(t_1') \leq x_j(t_1',t_2'-1) + \Lambda(t_2',T) . 
\end{equation}
This means the decision-maker would still be able to achieve the hindsight optimum of $\HO(t_1')$ if she follows the decision of Algorithm~\ref{alg:fd} in periods $t_1',\ldots,t_2'-1$ and then accepts $\bar{z}_j(t_1') - x_j(t_1',t_2'-1)$ type-$j$ customers during periods $t_2',\ldots,T$.

Apply Hoeffding's inequality to bound $\left(\Lambda_j(T) - \Lambda_j(l_1) - 2\lambda_j(T-l_1) \right)^{+}$. We have
\begin{align*}
\bE\big[\Lambda_j(T) - \Lambda_j(l_1) & - 2\lambda_j(T-t_1) )^{+}\big] 
= \int_0^{\infty} P\left(\Lambda_j(T) - \Lambda_j(l_1) - 2\lambda_j(T-l_1) \geq x\right) dx \\
&= \int_0^{\infty} P\left(\Lambda_j(T) - \Lambda_j(l_1) - \lambda_j(T-l_1) \geq x+\lambda_j(T-l_1) \right) dx \\
&\leq \int_0^{\infty} 2 \exp\left(- \frac{2(x+\lambda_j(T-l_1) )^2}{T - l_1} \right) dx \\
&\leq \int_0^{\infty} 2 \exp\left(- \frac{2(x+\lambda_j(T-l_1)  )\lambda_j(T-l_1) }{T-l_1} \right) dx \\
&= \frac{1}{\lambda_j} \exp(-2\lambda_j (T-l_1)^2).
\end{align*}

Thus, we have
\begin{align*}
\bE[V^{\AuxHO\text{-}\I} &- V^{\AuxHO\text{-}\II} ] 
\leq \bE\left[ \sum_{j=1}^n r_j (\Lambda_j(T) - \Lambda_j(l_1)) \mid E^\complement \right] P(E^\complement) \\
&\leq \bE\left[ \sum_{j=1}^n r_j \big(2\lambda_j (T-l_1) + (\Lambda_j(T) - \Lambda_j(l_1) + 
    2\lambda_j (T-l_1))^{+}\big) \mid E^\complement \right] P(E^\complement) \\
&\leq \sum_{j=1}^n r_j \bigg( 2\lambda_j (T-l_1) P(E^\complement) + \bE\left[ (\Lambda_j(T) - \Lambda_j(l_1) + 
    2\lambda_j (T-l_1))^{+} \mid E^\complement \right] P(E^\complement) \bigg) \\
&\leq \sum_{j=1}^n r_j \bigg( 2\lambda_j (T-l_1) P(E^\complement) + \frac{1}{\lambda_j} e^{-2\lambda_j^2 (T-l_1)^2} \bigg)
\end{align*}


By Lemma~\ref{lm:fd:p2:regret}, we have 
$P(E^{\complement}) = O(\exp(-T^\varepsilon))$, and therefore,
\begin{equation}
\bE[V^{\AuxHO\text{-\normalfont{I}}} - {V}^{\AuxHO\text{-\normalfont{II}}}]
\leq \sum_{j=1}^n r_j 
\bigg( 2\lambda_j (T-l_1) \, 
O\big( e^{-T^\varepsilon} \big) + \frac{1}{\lambda_j} e^{-2\lambda_j^2 (T-l_1)^2} \bigg) = O(T e^{-T^\varepsilon}).
\end{equation}

We last show (3)  $\bE[V^{\AuxHO\text{-\normalfont{II}}} - V^{\FD}] = O\big(T^{\frac{3b}{4} + \frac{\varepsilon}{2}} \big)$ 
given $0 < \varepsilon < 2\gamma$, $\varepsilon\leq \beta$ and $\varepsilon \geq \frac{\log (\log T)}{\log T}$.

Observe that Algorithm~\ref{alg:fd} runs Algorithm~\ref{alg:ff} as a subroutine using a virtual copy of initial capacity $B''(t_2')=\max\{ T^{3b/4}, 
\min\{B_i(t_2'), \bar{a} T^b\}\}$. In particular, the virtual capacity inflates any resource with small capacity to $T^{3b/4} $, and truncates any resource with large capacity to $ \bar{a} T^b$. By definition of $\bar{a}$, we know $ \bar{a} T^b$ is an upper bound on the units of resource consumption over the length of phase III.
Define
\begin{align}
\begin{split}
\tilde{V}^{\text{DLP}}(t) = \max_x\; & (T-t+1) \sum_{j=1}^n r_j x_j \\
\text{s.t.} \; & \sum_{j=1}^n A_j x_j \leq \frac{B''(t)}{T-t+1},\\
& 0 \leq x_j \leq \lambda_j, \; \forall j \in [n].
\end{split}
\end{align}
Compare $\tilde{V}^{\text{DLP}}(t)$ with $V^\DLP(t_2')$ as defined in \eqref{eq:DLP_t}. We know 
\begin{align}\label{eq:phase3a2}
     V^\DLP(t_2') 
     \leq \tilde{V}^{\text{DLP}}(t_2').
\end{align}

Now consider those resources with inflated virtual capacities.
Recall that $\bar{\alpha}_i$ denotes the upper bound on the revenue that can be achieved from one unit of resource $i$.
Let $v^{\FD}$ denote the actual revenue of Algorithm~\ref{alg:fd} in phase III, namely $v^{\FD}=\sum_{t=t_2'}^{T} r_{j(t)} z_t$ and $\tilde{v}^{\FD}$ the revenue of the Algorithm~\ref{alg:ff} subroutine based on virtual capacity $B''(t_2')$, namely $v^{\FD}=\sum_{t=t_2'}^{T} r_{j(t)} y_t$.
We have
\begin{align}\label{eq:phase3a1}
 \tilde{v}^{\FD} 
 \leq v^{\FD}
     + T^{3b/4}\,m\, \max_{i\in[m]} \bar{\alpha}_i.
\end{align}
Combining \eqref{eq:phase3a2} and \eqref{eq:phase3a1}, we have
\begin{align}\label{eq:phase3a3}
    V^\DLP(t_2') - v^{\FD}
     \leq \tilde{V}^{\text{DLP}}(t_2') - \tilde{v}^{\FD} 
     + T^{3b/4}\,m\, \max_{i\in[m]} \bar{\alpha}_i.
\end{align}

Notice that $\tilde{V}^{\text{DLP}}(t_2') - \tilde{v}^{\FD} $ is the regret of Algorithm~\ref{alg:ff} in phase III based on the virtual capacity $B''(t_2')$, and we can apply Proposition~\ref{prop:ff+:regret} to upper bound this regret.
In particular, the notations in Proposition~\ref{prop:ff+:regret} become $C_\text{max}=T^b$, $C_\text{min}=T^{3b/4}$, and thus 
$\frac{C_\text{max}}{C_\text{min}}=T^{b/4}$. 
Now take $K = e^{T^{\varepsilon}}$. We have $\log K =  T^{\varepsilon}$. Given $\varepsilon \geq \frac{\log(\log T)}{\log T}$, we know $\frac{1}{K} = e^{-T^{\varepsilon}} \leq \frac{1}{T}$.


By Proposition~\ref{prop:ff+:regret}, we have 
\begin{align}\label{eq:phase3a4}
    \tilde{V}^{\text{DLP}}(t_2') - v^{\FD} 
    &\leq \left(\rho_2 \frac{C_\text{max}}{C_\text{min}} + \rho_3\right) \sqrt{T}(\log K)^\frac{1}{2} + \left(\rho_1  \frac{C_\text{max}^2}{C_\text{min} T} + \rho_0 \frac{C_\text{max}}{C_\text{min}} \right) \sqrt{T} + \rho_4 \frac{C_\text{max}}{C_\text{min}}\nonumber\\
    &= \left(\rho_2 T^{\frac{b}{4}} + \rho_3\right) \sqrt{T^b}(\log K)^\frac{1}{2} + \left(\rho_1  T^{2b} T^{-\frac{3b}{4}-1} + \rho_0 T^{\frac{b}{4}} \right) \sqrt{T^b} + \rho_4 T^{\frac{b}{4}} \nonumber\\
    &=\left(\rho_2 T^{\frac{b}{4}} + \rho_3\right) T^{\frac{b}{2} + \frac{\varepsilon}{2}} + \left(\rho_1 T^{\frac{5b}{4}-1} + \rho_0 T^{\frac{b}{4}} \right) T^{\frac{b}{2}}
    + \rho_4 T^{\frac{b}{4}}, 
\end{align}
with probability at least $1-e^{-T^{\varepsilon}}=1-\frac{1}{K}$.

For simplicity of exposition, let $v_1:={V}^{\text{DLP}}(t_2') $
and $v_2:=\tilde{V}^{\text{DLP}}(t_2')$. 

Define event $F=\big\{v_2 - v^{\FD}\leq\left(\rho_2 \frac{C_\text{max}}{C_\text{min}} + \rho_3\right) \sqrt{T}(\log K)^\frac{1}{2} + \left(\rho_1  \frac{C_\text{max}^2}{C_\text{min} T} + \rho_0 \frac{C_\text{max}}{C_\text{min}} \right) \sqrt{T} + \rho_4 \frac{C_\text{max}}{C_\text{min}} \big\}$.
By definition of auxiliary algorithm $\AuxHO$-II, we have
\begin{align}
\bE\left[V^{\AuxHO\text{-}\II} - V^{\FD} \right]&\leq \bE\left[v_1 - v^{\FD} \right] \nonumber \\
&=\bE[v_1 - v^{\FD} \mid F^\complement] P(F^\complement)
+\bE[v_1 - v^{\FD} \mid F] P(F) \nonumber\\
&\leq \bE\left[ \sum_{j=1}^n r_j \Lambda_j(t_2',T) \right] P(F^\complement)+\bE[v_1 - v^{\FD} \mid F] P(F).
\end{align}

In particular, we have
\begin{align*}
\bE\left[ \sum_{j=1}^n r_j \Lambda_j(t_2',T) \right]
&\leq \bE\left[ \sum_{j=1}^n r_j \big(2\lambda_j (T-t_2'+1) + (\Lambda_j(t_2',T) + 2\lambda_j (T-t_2'+1))^{+}\big) \right] \\
&\leq \sum_{j=1}^n r_j \bigg( 2\lambda_j T^b + \frac{1}{\lambda_j} e^{-2\lambda_j^2 T^{2b}} \bigg) ,
\end{align*}
where the second term is due to the following 
\begin{align*}
 \bE[(\Lambda_j(t_2',T) & - 2\lambda_j(T-t_2'+1) )^{+}]
= \int_0^{\infty} P\left(\Lambda_j(t_2',T) - 2\lambda_j(T-t_2'+1) \geq x\right) dx \\
&= \int_0^{\infty} P\left(\Lambda_j(t_2',T) - \lambda_j(T-t_2'+1) \geq x+ \lambda_j(T-t_2'+1)\right) dx \\
&\leq \int_0^{\infty} 2 \exp\left(- \frac{2(x+\lambda_j(T-t_2'+1) )^2}{T - t_2'+1} \right) dx \\
&\leq \int_0^{\infty} 2 \exp\left(- \frac{2(x+\lambda_j(T-t_2'+1) )\lambda_j(T-t_2'+1) }{T - t_2'+1} \right) dx\\ 
&= \frac{1}{\lambda_j} \exp(-2\lambda_j (T-t_2'+1)^2) \\
&= \frac{1}{\lambda_j} \exp(-2\lambda_j T^{2b}).
\end{align*}

Additionally, conditioned on event $F$, by \eqref{eq:phase3a3} and \eqref{eq:phase3a4}, we have 
\begin{equation}\label{eq:phase3a5}
    v_1 - v^{\FD} \leq \left(\rho_2 T^{\frac{b}{4}} + \rho_3\right) T^{\frac{b}{2} + \frac{\varepsilon}{2}} + \left(\rho_1 T^{\frac{5b}{4}-1} + \rho_0 T^{\frac{b}{4}} \right) T^{\frac{b}{2}}
    + \rho_4 T^{\frac{b}{4}} + \rho_5 T^{\frac{3}{4}b}
    =O(T^{\frac{3}{4}b+\frac{\varepsilon}{2}}),
\end{equation}
where 
$\rho_5:=m\, \max_{i\in[m]} \bar{\alpha}_i$. The equality follows given that $\varepsilon > 0$ and $b=\frac{1}{2} + \beta < 1$ by definition.

Combining the results above, we have
\begin{align*}
\bE\left[V^{\AuxHO\text{-}\II} - V^{\FD} \right] 
&\leq \bE\left[ \sum_{j=1}^n r_j \Lambda_j(t_2',T) \right] P(F^\complement)+\bE[v_1 - v^{\FD} \mid F].\\
&\leq \sum_{j=1}^n r_j \bigg( 2\lambda_j T^b + \frac{1}{\lambda_j} e^{-2\lambda_j^2 T^{2b}} \bigg) e^{-T^{\varepsilon}} +
O(T^{\frac{3b}{4}+\frac{\varepsilon}{2}})\\
&= O(T^{\frac{3}{4}b+\frac{\varepsilon}{2}}).
\end{align*}

This completes the proof. \halmos

\end{proof}

\subsection{Lemma~\ref{lm:fd:p2:x}}

\begin{lemma}\label{lm:fd:p2:x}
Let $x_j^*(t)$ for $j\in[n]$ be the optimal solution to $\DLP(t)$ in \eqref{eq:DLP_t}.
Define variable 
\begin{equation}
\Gamma(t,T) = \kappa \sum_{j: x_j^* = \lambda_j} \Big|\Lambda_j(t,T) - \lambda_j(T-t+1) \Big| ,
\end{equation}
where $\kappa$ is is a constant whose value is determined by the BOM matrix $A= (a_{ij} )_{i\in[m],j\in[n]}$. 
Recall $\bar{z}_j(t)$ for $j\in[n]$ is the optimal solution to $\HO(t)$ as defined in \eqref{eq:HO_t}.
We have
\begin{equation}\label{eq:fd:x}
    \bar{z}_j(t ) \in 
    \Big[ (T-t+1) x_j^*(t) - \Gamma(t,T) , (T-t+1) x_j^*(t) + \Gamma(t,T)  \Big].
\end{equation}

\end{lemma}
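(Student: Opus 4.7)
The plan is to reduce the claim to LP sensitivity analysis around $\DLP(t)$. First, via the substitution $z_j = (T-t+1)x_j$, I observe that $\bar{x}_j(t) := \bar{z}_j(t)/(T-t+1)$ is optimal for the LP identical to $\DLP(t)$ except that each upper bound $\lambda_j$ is replaced by $\Lambda_j(t,T)/(T-t+1)$. Hence $\bar{z}_j(t) - (T-t+1)x_j^*(t) = (T-t+1)\big(\bar{x}_j(t) - x_j^*(t)\big)$, and it suffices to bound the change in the primal optimum of $\DLP(t)$ under a perturbation of the upper-bound vector by $\delta_j := \Lambda_j(t,T)/(T-t+1) - \lambda_j$.

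Next, I would exploit the non-degeneracy guaranteed by Assumption~\ref{asmp:1} via Lemma~\ref{lm:asmp:1}. Putting $\DLP(t)$ in the standard form $\LP(b,v)$ with slacks for both the capacity and the upper-bound constraints, the optimal basis is unique. For any $j$ with $x_j^*(t) < \lambda_j$, the slack of $x_j \leq \lambda_j$ is basic and strictly positive, so perturbing $\lambda_j$ merely shifts that slack and leaves every other primal component unchanged. This explains why only indices in $J^* := \{j : x_j^*(t) = \lambda_j\}$ can drive changes in $\bar{x}(t) - x^*(t)$, precisely matching the set summed over in $\Gamma(t,T)$.

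The quantitative step is then LP sensitivity: when the basis is preserved, the change in the basic variables equals $\bar{A}_B^{-1}$ applied to the perturbation vector supported on $J^*$, so $\|\bar{x}(t) - x^*(t)\|_\infty \leq \|\bar{A}_B^{-1}\|_\infty \sum_{j \in J^*} |\delta_j|$. Setting $\kappa$ to be the maximum of $\|\bar{A}_B^{-1}\|_\infty$ over all feasible bases of $\bar{A}$---a finite quantity depending only on the BoM matrix $A$---and multiplying through by $(T-t+1)$, combined with the identity $(T-t+1)|\delta_j| = |\Lambda_j(t,T) - \lambda_j(T-t+1)|$, yields exactly the bound in \eqref{eq:fd:x}.

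The main obstacle is that the sensitivity identity only holds when the perturbation is small enough to keep the current basis optimal; for larger perturbations the basis changes, and the simple inverse-basis bound can break down. My remedy would be to invoke Hoffman's lemma for linear systems, which asserts that the distance between optimal primal solutions of two LPs sharing the same polyhedral structure is bounded linearly in the $\ell_1$ perturbation of the right-hand side, with a constant depending only on the coefficient matrix. Equivalently, a parametric-LP argument that walks through the finitely many optimal bases as the perturbation grows produces the same uniform $\kappa$, giving the claimed deterministic bound for every realization of $\Lambda_j(t,T)$.
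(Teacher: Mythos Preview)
The paper gives no argument of its own here; it simply cites Theorem~4.2 of \cite{reiman2008asymptotically}. Your LP-sensitivity reduction is exactly the machinery behind that result, so your route is the same in substance. The first two paragraphs are correct: after rescaling, $\bar z(t)/(T-t+1)$ solves $\DLP(t)$ with each upper bound $\lambda_j$ replaced by $\Lambda_j(t,T)/(T-t+1)$, and while the optimal basis of $\DLP(t)$ persists, only perturbations of the binding upper bounds (those $j$ with $x_j^*(t)=\lambda_j$) propagate through $\bar A_B^{-1}$ into the remaining coordinates. That is precisely why the sum defining $\Gamma(t,T)$ runs only over $J^*$ and why $\kappa$ is taken as the largest entry among the basis inverses.

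The gap is in your final paragraph. Neither Hoffman's bound nor a parametric-LP walk preserves the restriction to $J^*$: both deliver a Lipschitz estimate in the \emph{full} right-hand-side perturbation $\sum_{j\in[n]}|\delta_j|$, not the restricted one. Once the basis changes, that restriction is genuinely lost---for a type $j$ with $0<x_j^*(t)<\lambda_j$, any realization with $\Lambda_j(t,T)/(T-t+1)<x_j^*(t)$ forces $\bar x_j(t)$ to move although $j\notin J^*$ contributes nothing to $\Gamma(t,T)$, so the realization-wise inequality can fail outright. Your Hoffman remedy therefore cannot recover the bound as written. What makes the lemma serviceable in the paper is that it is only ever invoked together with concentration estimates (Lemma~\ref{lm:fd:p2:regret}) that keep every $\delta_j$ small enough for the basis to remain fixed with probability $1-O(e^{-T^\varepsilon})$; in that regime your inverse-basis argument is exactly what is needed, and no Hoffman step is required.
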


\begin{remark}
More specifically, $\kappa$ is the maximum absolute value of
the elements in the inverses of all invertible submatrices of the BOM matrix $A$. In a special case when all entries of $A$ are either $0$ or $1$, we have $\kappa \leq \max\{1, \min\{m,n\}-1\}$.
\end{remark}

\begin{proof}{Proof of Lemma~\ref{lm:fd:p2:x}.}
The Lemma follows theorem 4.2 in Reiman and Wang (2008).
\end{proof}

\subsection{Lemma~\ref{lm:fd:p2:regret}}

Recall Algorithm~\ref{alg:fd} with start time $t_1=1$, end time $t_2 = T$, initial capacity vector $C$,
and technical parameters $\alpha $, $\beta$, $\gamma$ that satisfy
$0<\alpha < \frac{1}{2}$,
$\alpha \leq \beta < \frac{1}{2}$ and $0 < \gamma < \frac{\alpha}{2}$.
The algorithm calculates $a = \alpha$, $b = \frac{1}{2}+\beta$ and $c = \frac{\alpha}{2} + \gamma$.
Let $t_1':= l_1+1$ be the starting time period of phase II, and $t_2'=l_1+l_2+1$ the starting time period of phase III, where $l_1=\lceil T^a \rceil$, $l_2=\lceil T-T^a-T^b \rceil$ denote the lengths of phase I and phase II, respectively.

Recall that $\bar{x}_j(t_1',t_2'-1)$ is the number of type $j$ customers accepted by Algorithm~\ref{alg:fd} over periods $t_1', \ldots, t_2'-1$ if we were allowed to go over the real capacity limits $B(t)$, $\bar{z}_j(t_1')$ for $j\in[n]$ is the hindsight optimal solution to $\HO(t_1')$, and $x_j^*$ for $j\in[n]$ is the optimal solution to the $\DLP$ in \eqref{eq:DLP1}. 
Under Assumption~\ref{asmp:1}, by Lemma~\ref{lm:asmp:1}, we know \eqref{eq:DLP1} has a unique optimal solution, which is equivalent to the optimal solution of $\DLP(t_1')$, and thus
we know $x_j^* = x_j^*(t_1')$ for $j\in[n]$.

\begin{lemma}\label{lm:fd:p2:regret}

The probability of event $E$ defined in \eqref{eq:fd:p2:event} satisfies $$P(E^\complement) = O(e^{-T^\varepsilon})$$ for any $\varepsilon$ such that $0 < \varepsilon < 2\gamma$
and $\varepsilon \geq \frac{\log (\log T)}{\log T}$.
\end{lemma}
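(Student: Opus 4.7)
The plan is to bound $P(E^\complement) = O(e^{-T^\varepsilon})$ by conditioning on the intersection of several high-probability "good" events under which $E$ can be verified deterministically. Throughout, I would take $K = e^{T^\varepsilon}$, so that Proposition~\ref{prop:ff+:regret} and its corollary Proposition~\ref{prop:ff+:x} yield a tail probability of $O(e^{-T^\varepsilon})$ rather than $O(1/T)$.

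\textbf{Step 1 (concentration events).} I would define three events, each of probability at least $1 - O(e^{-T^\varepsilon})$. First, Proposition~\ref{prop:ff+:x} applied to the phase-I subroutine (horizon $l_1$, initial capacity $C l_1/T$) gives $|x_j^{\I} - (l_1/T) w_j^*| \leq \rho_1 \sqrt{l_1 \log K}$ for every $j\in[n]$. Second, Proposition~\ref{prop:ff+:regret} together with Proposition~\ref{prop:ff+:x} applied to the "virtual" phase-II subroutine (which updates $B'(t)$, so it corresponds to running Algorithm~\ref{alg:ff} unmodified on the phase-II horizon) gives $|\bar x_j(t_1', t_2'-1) - l_2 x_j^*(t_1')|$ small. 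Third, Hoeffding's inequality together with a union bound yields $|\Lambda_j(s_1,s_2) - \lambda_j(s_2-s_1+1)| \leq \sqrt{(s_2-s_1+1)\log K}$ for all $j$ and for the three intervals $[t_1',T]$, $[t_1',t_2'-1]$, $[t_2',T]$. Let $F$ denote the intersection of these three events; the union bound gives $P(F^\complement) = O(e^{-T^\varepsilon})$.

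\textbf{Step 2 (type classification under $F$).} Under Assumption~\ref{asmp:1}, Lemma~\ref{lm:asmp:1} says the DLP optimum is at a unique vertex, so for every $j$, $w_j^*$ equals $0$, $\lambda_j T$, or a value pinned by tight capacity constraints. Using the phase-I concentration bound and the thresholding gaps, I would show that $j \in \calA$ forces $x_j^*(t_1') = \lambda_j$ and $j \in \calR$ forces $x_j^*(t_1') = 0$. The key arithmetic is that the threshold gap $\lambda_j T^c = \lambda_j T^{a/2+\gamma}$ strictly dominates the phase-I concentration error $\rho_1 \sqrt{l_1 \log K} = \rho_1 T^{a/2 + \varepsilon/2}$ precisely because $\varepsilon < 2\gamma$; combined with the vertex "gap" supplied by Assumption~\ref{asmp:1}, this deterministically rules out intermediate values of $w_j^*$ under $F$.

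\textbf{Step 3 (verify $E_j$ case-by-case).} Lemma~\ref{lm:fd:p2:x} gives $\bar{z}_j(t_1') \in [(T-l_1) x_j^*(t_1') - \Gamma,\; (T-l_1) x_j^*(t_1') + \Gamma]$, where under the Hoeffding event $\Gamma = O(\sqrt{T \log K}) = O(\sqrt{T^{1+\varepsilon}})$. For $j \in \calA$: $\bar{x}_j = \Lambda_j(t_1',t_2'-1)$ and $(T-l_1)x_j^*(t_1') = \lambda_j l_2 + \lambda_j T^b$; the lower bound of $E_j$ follows because $\lambda_j T^b$ dominates both $\Gamma$ and the Hoeffding fluctuation of $\Lambda_j(t_1',t_2'-1)$ (since $b = 1/2+\beta \geq 1/2+\varepsilon$), while the upper bound is immediate from DLP feasibility, $\bar{z}_j(t_1') \leq \Lambda_j(t_1',T) = \bar{x}_j + \Lambda_j(t_2',T)$. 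For $j \in \calR$: $(T-l_1) x_j^*(t_1') = 0 = \bar{x}_j$; the lower bound is trivial, and the upper bound $\bar{z}_j(t_1') \leq \Gamma$ is dominated by $\Lambda_j(t_2',T) \gtrsim \lambda_j T^b$ for the same reason. For $j \notin \calA \cup \calR$: combining $\bar{x}_j(t_1',t_2'-1) \approx l_2 x_j^*(t_1')$ with $\bar{z}_j(t_1') \approx (T-l_1)x_j^*(t_1')$ and using $(T-l_1-l_2) x_j^*(t_1') \leq \lambda_j T^b$ against the Hoeffding approximation $\Lambda_j(t_2',T) \approx \lambda_j T^b$ yields both inequalities defining $E_j$.

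\textbf{Main obstacle.} The most delicate step is Step 2: turning a low-resolution estimate from a very short phase I ($l_1 = T^a$ periods) into a deterministic, exact identification of the saturated components of the vertex solution. This relies critically on the vertex gap afforded by Assumption~\ref{asmp:1} and on the parameter separation $\varepsilon < 2\gamma < a$, which ensures $T^c \gg \sqrt{l_1 \log K}$. A secondary technical obstacle is event $F_2$ in Step 1: the phase-II subroutine operates with capacity $C(T-l_1)/T$ over $l_2$ periods, so the per-period capacity is $C(T-l_1)/(Tl_2) = (C/T)(1 + T^b/l_2)$, and a short rescaling argument (combined with Lemma~\ref{lm:asmp:1}'s uniqueness) is needed to identify the phase-II DLP optimum with $l_2 x_j^*(t_1')$ up to lower-order error.
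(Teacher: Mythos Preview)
Your overall architecture---union bound over $j$, set $K=e^{T^\varepsilon}$, invoke Proposition~\ref{prop:ff+:x} on both the phase-I and phase-II subroutines, then case-split---matches the paper's. The paper cases on the \emph{true} value of $x_j^*$ (zero, $\lambda_j$, or interior) and then shows the classification is correct with high probability; you case on the \emph{observed} class ($\calA$, $\calR$, neither). These are contrapositive viewpoints and both are valid.

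There is, however, a genuine gap in your Step~3 for $j\notin\calA\cup\calR$. Your ``$\approx$'' signs hide additive errors of order $O(T^{1/2+\varepsilon/2})$ (from Proposition~\ref{prop:ff+:x} on phase~II and from $\Gamma$ via Hoeffding). To verify the lower inequality $\bar z_j(t_1')\ge\bar x_j$ you need $T^b x_j^* \gtrsim T^{1/2+\varepsilon/2}$, and for the upper inequality you need $T^b(\lambda_j-x_j^*)\gtrsim T^{1/2+\varepsilon/2}$. Neither follows from your Step~2, which only proves the \emph{forward} implications $j\in\calA\Rightarrow x_j^*=\lambda_j$ and $j\in\calR\Rightarrow x_j^*=0$; you have not ruled out $x_j^*\in\{0,\lambda_j\}$ when $j\notin\calA\cup\calR$. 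You must also prove the reverse implications under $F$ (same phase-I concentration argument, just swap the inequality), so that $j\notin\calA\cup\calR$ forces $x_j^*\in(0,\lambda_j)$, a fixed constant bounded away from the endpoints. The paper sidesteps this by working directly with the observed threshold bound $x_j(1,t_1'-1)\ge\lambda_j T^c$: it rewrites the event so the comparison quantity is $\tfrac{1}{l_1}(T-t_2'+1)\,x_j(1,t_1'-1)\ge\lambda_j T^{b-a+c}$ and checks $b-a+c>\tfrac12+\tfrac{\varepsilon}{2}$. Either route closes the gap.

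A minor point: your ``secondary obstacle'' is not one. Algorithm~\ref{alg:ff} sets $L=T-t_1+1=T-l_1$, not $l_2$, so the per-period capacity in $g_t$ is $B_i/L=C_i(T-l_1)/(T(T-l_1))=C_i/T$, and the phase-II DLP coincides exactly with the original; no rescaling argument is needed to identify $z_j^*=l_2 x_j^*$.
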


\begin{proof}{Proof.}

By union bound, we have $P(E^\complement) \leq \sum_{j\in[n]} P(E_j^\complement)$. Thus, it suffices to show 
\begin{equation}\label{eq:event_master}
    P(E_j) = 1-O(e^{-T^\varepsilon})
    \text{  or  } P(E_j^\complement) = O(e^{-T^\varepsilon}) \text{ for all } j \in [n].
\end{equation}


Define events
\begin{equation}
 E_{1,j} = \big\{ \bar{z}(t_1')_j - \bar{x}_j(t_1',t_2'-1) \geq 0 \big\} , \; \forall \; j\in[n] 
\end{equation}
\begin{equation}
 E_{2,j} = \big\{ \bar{x}_j(t_1',t_2'-1) + \Lambda_j(t_2',T) - \bar{z}_j(t_1') \geq 0 \big\} , \; \forall \; j\in[n].
\end{equation}
Thus, we have $E_j = E_{1,j} \cap E_{2,j}$ for all $j\in [n]$.

Let $x_j(1,t_1'-1)$ denote the number of type-$j$ customers accepted by Algorithm~\ref{alg:fd} during periods $1,\ldots,t_1'-1$. Recall that $l_1=t_1'-1$. Apply Proposition~\ref{prop:ff+:x} to bound $|x_j(1,t_1'-1) - l_1 x^*_j|$ given $K=\Omega(T)$, and we obtain 
\begin{equation}
    |x_j(1,t_1'-1) - l_1 x^*_j| = O(\sqrt{l_1}(\log K)^{\frac{1}{2}})
\end{equation}
with probability at least $1-O(\frac{1}{K})$.

In addition, by definition, we know  $\bar{x}_j(t_1',t_2'-1)$ for $j\in(\calR \cup \calA)^\complement$
are decisions of the Algorithm~\ref{alg:ff} subroutine in phase II.
Apply Proposition~\ref{prop:ff+:x} to bound $|\bar{x}_j(t_1',t_2'-1) - l_2 x^*_j|$ given $K=\Omega(T)$, and we obtain 
\begin{equation}
    |\bar{x}_j(t_1',t_2'-1) - l_2 x^*_j| = O(\sqrt{l_2}(\log K)^{\frac{1}{2}})
\end{equation} 
with probability at least $1-O(\frac{1}{K})$.

Given $0 < \varepsilon < 2\gamma $ and 
$\varepsilon \geq \frac{\log (\log T)}{\log T}$, set $ K = \exp(T^{\varepsilon})$. We then have $K=T$ and $ \frac{1}{K} = \frac{1}{T} $.

In the following proof, we will show \eqref{eq:event_master} in three cases:
$$\text{1)}\, x_j^{*}=0 , \;\;
\text{2)} \, x_j^{*}=\lambda_j , \;\; 
\text{and 3)}\, x_j^{*} \in (0,\lambda_j).$$

First, we consider case 1) $x_j^{*}=0$.

By Proposition~\ref{prop:ff+:x}, we know \[
x_j(1,t_1'-1) = O(\sqrt{l_1 \log K})
= O(T^{\frac{a}{2}+\frac{\varepsilon}{2}})
= O(T^{\frac{\alpha}{2}+\gamma})=O(T^c)
\]
with probability at least $1-O(1/K) = 1-O(e^{-T^{\varepsilon}})$,
where the last inequality follows since $\varepsilon < 2\gamma$. This implies $P(j \in \calR) = 1-O(e^{-T^\varepsilon})$.

Conditioned on $j\in R$, we have $\bar{z}_j(t_1') \geq \bar{x}_j(t_1',t_2'-1)=0$, and thus $P(E_{1,j} \mid j\in R) = 1$.

Now consider event $E_{2,j}$. Define variables
\begin{equation}
    \Delta_j (t ,T) = \Lambda_j(t,T) - \lambda_j(T-t), \; \forall \; j\in[n],
\end{equation}

We know by \eqref{eq:fd:x} in Lemma~\ref{lm:fd:p2:x} that
$\bar{z}_j(t_1') \leq (T-l_1)x_j^*+\Gamma(t_1',T) = \Gamma(t_1',T)$. 
Thus, we have 
\begin{align*}
P(E_{2,j}^{\complement} \mid j\in \calR) &=
P\big(\bar{z}_j(t_1') > \Lambda_j(t_2',T)  \big) \\
&\leq P\big(\Gamma(t_1',T) > \Lambda_j(t_2',T)  \big)\\
&\leq P\big(\Gamma(t_1',T) + |\Delta_j(t_2',T)| > \lambda_j (T-t_2'+1)\big) \\
&\leq P\big(\Gamma(t_1',T) > \frac{1}{2}\lambda_j(T-t_2'+1)) + P\big(|\Delta_j(t_2',T)| > \frac{1}{2}\lambda_j(T-t_2'+1)\big),
\end{align*}
where the second inequality follows since $\Lambda_j(t_2',T)  \geq\lambda_j(T-t_2'+1) - |\Delta_j(t_2',T)| $, and the last inequality follows by union bound and the observation that when
$\{\Gamma(t_1',T) + |\Delta_j(t_2',T)| > \lambda_j(T-t_2'+1)\}$ happens, at least one of the following two events 
\[\{\Gamma(t_1',T) > \frac{1}{2}\lambda_j(T-t_2'+1)\} 
\text{  and  }
\{|\Delta_j(t_2',T)| > \frac{1}{2}\lambda_j(T-t_2'+1)\}
\] must happen.

Let $J_{\lambda}=\{j: x_j^* =\lambda_j\}$. By definition, 
\begin{align}
    P\big(\Gamma(t_1',T) > \frac{1}{2} & \lambda_j(T-t_2'+1) \big) \nonumber
    = P\left(\kappa \sum_{j\in J_{\lambda}} \Big|\Lambda_j(t_1',T) - \lambda_j(T-t_1'+1) \Big| > \frac{1}{2}\lambda_j(T-t_2'+1) \right)  \nonumber\\
    \leq\,& P\left(\kappa \sum_{j\in J_{\lambda}} \Big|\Lambda_j(t_1',T) - \lambda_j(T-t_1'+1) \Big| > \frac{1}{2}\lambda_j(T-t_2'+1)  \mathbf{1}(|J_{\lambda}|>0)\right)  \nonumber\\
    \leq\,& \sum_{j\in J_{\lambda}} 
    P\left(\kappa\,  \Big|\Lambda_j(t_1',T) - \lambda_j(T-t_1'+1) \Big| > \frac{1}{2|J_{\lambda}|}\lambda_j(T-t_2'+1)
    \mathbf{1}(|J_{\lambda}|>0)\right)  \nonumber\\
    \leq\,& \sum_{j\in J_{\lambda}} 
    P\left(\Big|\Lambda_j(t_1',T) - \lambda_j(T-t_1'+1) \Big| > \frac{\lambda_j(T-t_2'+1)}{2\kappa |J_{\lambda}|+1} \right).
\end{align}

By Hoeffding's inequality, we have
\begin{align}
P \Big(\Big|\Lambda_j(t_1',T)  & - \lambda_j(T-t_1'+1) \Big| > \frac{\lambda_j(T-t_2'+1)}{2\kappa |J_{\lambda}|+1}  \Big) \nonumber\\
\leq\, & 2\exp\left(- \frac{2\lambda_j^2(T-t_2'+1)^2}{(2\kappa |J_{\lambda}|+1)^2 (T-t_1)}\right)
= O(\exp(-T^{2b-1})) 
\end{align}

Thus, we obtain
\begin{equation}\label{eq:fd:case1:1}
    P\big(\Gamma(t_1',T) > \frac{1}{2}\lambda_j(T-t_2'+1) \big) = O(\exp(-T^{2b-1}))
\end{equation}

In addition, by Hoeffding's inequality, we have
\begin{align}\label{eq:fd:case1:2}
    P\left(|\Delta_j(t_2',T)| > \frac{1}{2}\lambda_j(T-t_2'+1)\right) 
    \leq 2\exp\left(-\frac{\lambda_j^2(T-t_2'+1)^2}{2(T-t_2'+1)}\right) = O(\exp(-T^{b}))
\end{align}

Combining \eqref{eq:fd:case1:1} and \eqref{eq:fd:case1:2}, we have
\begin{equation}
P(E_{2,j}^{\complement} \mid j\in R) = O(\exp(-T^{2b-1})) = O(-e^{T^\beta}).
\end{equation}

Therefore, in case 1), we have
\begin{align}
    P(E_{j}) &\geq P(E_{1,j} \cap E_{2,j} \mid j\in \calR) \cdot P(j\in \calR) \nonumber\\
    &\geq \big(1 - P(E^c_{1,j} \mid j\in \calR) 
    - P(E^c_{2,j} \mid j\in \calR) \big) \cdot P(j\in \calR) \nonumber\\
    &= 1- O(e^{-T^\beta}) - O(e^{-T^\varepsilon})\nonumber\\
    &= 1- O(e^{-T^\varepsilon}) ,
    \label{eq:fd:case1}
\end{align}
where the last line follows since $0 < \varepsilon \leq \beta$.

Next, we consider case 2) $x_j^{*}=\lambda_j$. 

By Proposition~\ref{prop:ff+:x}, we know with probability at least $1-O(1/K) =1-O(e^{-T^\varepsilon})$ that 
$$x_j(1,t_1'-1) = \lambda_j l_1-O(\sqrt{l_1}(\log K)^{\frac{1}{2}}) = O(T^{a} - T^{\frac{a}{2}+\frac{\varepsilon}{2}})
= O(T^{a} - T^{c}),$$
where the last equality follows since $\varepsilon < 2\gamma$. This implies $P(j \in A) = 1-O(e^{-T^\varepsilon})$.

Conditioned on $j\in \calA$, we have
$\bar{x}_j(t_1',t_2'-1)=\Lambda(t_1',t_2'-1)$, 
and $\bar{z}_j(t_1') \leq \Lambda(t_1',T) = \bar{x}_j(t_1',t_2'-1) + \Lambda(t_2',T)$. 
Thus, $P(E_{2,j} \mid j\in A) = 1$.

Now consider event $E_{1,j}$. 
We know by \eqref{eq:fd:x} in Lemma~\ref{lm:fd:p2:x} that
$\bar{z}_j(t_1') \geq (T-l_1)x_j^*-\Gamma(t_1',T)$. 
Thus, we have 
\begin{align*}
P(E_{1,j}^{\complement} \mid j\in \calA) &=
P\big(\bar{z}_j(t_1') < \Lambda_j(t_1',t_2'-1) \big) \\
&\leq P\big(\lambda_j(T-t_1'+1)-\Gamma(t_1',T) < \Lambda_j(t_1',t_2'-1) \big)\\
&\leq P\big(\lambda_j(T-t_1'+1)-\Gamma(t_1',T) < \lambda_j l_2 + |\Delta_j(t_1',t_2'-1)|\big)\\
&= P\big(\Gamma(t_1',T) + |\Delta_j(t_1',t_2')|
> \lambda_j(T-t_2'+1) \big) \\
&\leq P\big(\Gamma(t_1',T)
> \frac{1}{2}\lambda_j(T-t_2'+1) \big) 
+ P\big(|\Delta_j(t_1',t_2'-1)|
> \frac{1}{2}\lambda_j(T-t_2'+1) \big) \\
& = O(\exp(-T^{2b-1})) = O(-e^{T^\beta}),
\end{align*}
where the last line follows because
\begin{equation}
P\big(\Gamma(t_1',T) > \frac{1}{2}\lambda_j(T-t_2'+1) \big) 
= O(\exp(-T^{2b-1})),
\end{equation}
\begin{equation}
P\big(|\Delta_j(t_1',t_2'-1)| > \frac{1}{2}\lambda_j(T-t_2'+1) \big)
\leq 2 \exp\left(-\frac{\lambda_j^2(T-t_2'+1)^2}{2 l_2}\right) = O(\exp(-T^{2b-1})).
\end{equation}

Therefore, in case 2), we have,
\begin{align}
    P(E_{j}) &\geq P(E_{1,j} \cap E_{2,j} \mid j\in \calA) \cdot P(j\in \calA) \nonumber\\
    &\geq \big(1 - P(E^\complement_{1,j} \mid j\in \calA)
    - P(E^\complement_{2,j} \mid j\in \calA) \big) \cdot P(j\in \calA) \nonumber\\
    &= 1- O(e^{-T^\beta}) - O(e^{-T^\varepsilon}) \nonumber \\
    &= 1 - O(e^{-T^\varepsilon}) .
    \label{eq:fd:case2}
\end{align}










Last, we consider case 3) $x_j^{*} \in (0,\lambda_j)$. We have 
\begin{align}\label{eq:fd:case3:0}
    P(E_j) = &P(E_j \mid j\in \calR) P(j \in \calR) + P(E_j \mid j\in \calA) P(j \in \calA) \nonumber\\
    &+P(E_j \mid j\in (\calA\cup \calR)^\complement) P(j \in (\calA\cup \calR)^\complement).
\end{align}

We first show that $P(j\in \calR) = O(e^{-T^\varepsilon})$ and $P(j\in \calA) = O(e^{-T^\varepsilon})$.

By Proposition~\ref{prop:ff+:x}, we know with probability at least $1-O(1/K) =1-O(e^{-T^\varepsilon})$ that
$$x_j(1,t_1'-1) = l_1 x_j^{*} - O(\sqrt{l_1 \log K})
= O(T^a) - O(T^{\frac{a}{2}+\frac{\varepsilon}{2}})
> \lambda_{j} T^{\frac{a}{2}+\gamma}= \lambda_{j} T^c,$$
where the last inequality follows since $0<\varepsilon < 2\gamma < \alpha$. 
This implies 
\begin{equation}\label{eq:fd:case3:1}
    P(j\in R) =P(x_j(1,t_1'-1) < \lambda_j T^{c}) = O(e^{-T^\varepsilon}) . 
\end{equation}

In addition, we know with probability at least $1-O(1/K) = 1-O(e^{-T^\varepsilon})$ that
$$x_j(1,t_1'-1) = l_1 x_j^{*} + O(\sqrt{l_1 \log K}) = x_j^{*} T^a - O(T^{\frac{a}{2}+\frac{\varepsilon}{2}})
< \lambda_j (T^{a} - T^{c}),$$
where the last inequality follows since 
$a = \alpha > c = \frac{\alpha}{2} + \gamma >
\frac{\alpha}{2} + \frac{\varepsilon}{2}$
and 
$(\lambda_j -x_j^{*} )T^{a} > O(T^c) - O(T^{\frac{a}{2}+\frac{\varepsilon}{2}}) > 0$.
This implies 
\begin{equation}\label{eq:fd:case3:2}
    P(j\in \calA) =P(x_j(1,t_1'-1) > \lambda_j (T^{a} - T^{c})) = O(e^{-T^\varepsilon}) . 
\end{equation}

Combining \eqref{eq:fd:case3:1} and \eqref{eq:fd:case3:2}, we have $P(j\in(\calA\cup \calR)^\complement) = 1-O(e^{-T^\varepsilon})$.

Now consider event $E_{1,j}$ conditioned on $j\in(\calA \cup \calR)^\complement$.

We know by \eqref{eq:fd:x} in Lemma~\ref{lm:fd:p2:x} that
$\bar{z}_j(t_1') \geq (T-l_1)x_j^*-\Gamma(t_1',T)$.
Thus, we have 
\begin{equation}
P(E_{1,j}^\complement) =
P\big(\bar{z}_j(t_1') < \bar{x}(t_1', t_2'-1)\big)
\leq P\big( (T-l_1)x_j^*-\Gamma(t_1',T) < \bar{x}(t_1', t_2'-1) \big)
\end{equation}

Observe that the event 
$\{(T-l_1)x_j^*-\Gamma(t_1',T) < \bar{x}_j (t_1', t_2'-1)\}$ is equivalent to 
\begin{equation}
    \left\{ \bar{x}_j(t_1',t_2'-1) - l_2 x_j^* 
    + \frac{1}{l_1}(T-t_2'+1) \bigg( x_j(1,t_1'-1) - l_1 x_j^* \bigg) + \Gamma(t_1',T)
    > \frac{1}{l_1}(T - t_2'+1) x_j(1,t_1'-1) \right\},
\end{equation}

Therefore, when $E_{1,j}^c$ happens, at least one of the following three events must happen:
\begin{equation}
    E_{1,j}^{(a)}: \left\{ \bar{x}_j(t_1',t_2'-1) - l_2 x_j^* 
    > \frac{1}{3} \frac{1}{l_1}(T - t_2'+1) x_j(1,t_1'-1) \right\},
\end{equation}
\begin{equation}
   E_{1,j}^{(b)}: \left\{ \frac{1}{l_1}(T - t_2'+1) \bigg( x_j(1,t_1'-1) - l_1 x_j^* \bigg)  > \frac{1}{3} \frac{1}{l_1}(T - t_2'+1) x_j(1,t_1'-1)  \right\},
\end{equation}
\begin{equation}
   E_{1,j}^{(c)}: \left\{ \Gamma(t_1',T) > \frac{1}{3} \frac{1}{l_1}(T - t_2' +1) x_j(1,t_1'-1) \right\}.
\end{equation}

By Proposition~\ref{prop:ff+:x}, we know that with probability at least $1-O(1/K) =1-O(e^{T^{\varepsilon}})$,
$$\bar{x}(t_1',t_2'-1) - l_2 x_j^* = O\bigg(\sqrt{l_2} (\log K)^{\frac{1}{2}} \bigg)
= O(T^{\frac{1}{2}+\frac{\varepsilon}{2}})
= O(T^{\frac{1}{2} + \beta -\frac{\alpha}{2} + \gamma}) = O(T^{b-a+c}) , $$
where the last inequality follows since
$0 < \varepsilon < 2\gamma$ and $\beta \geq \frac{\alpha}{2} $.

Since $x_j(1,t_1'-1) \geq \lambda_j T^{c}$ given $j\in(\calA \cup \calR)^\complement$, we know
\[\frac{1}{l_1}(T - t_2'+1) x_j(1,t_1'-1) \geq T^{b-a+c}, \] and therefore, we have
\begin{equation}\label{eq:fd:case3:3}
P\big(E_{1,j}^{(a)}\big) = O\bigg(\bar{x}(t_1',t_2'-1) - l_2 x_j^* > \frac{1}{3}\lambda_j T^{b-a+c} \bigg) 
= O(e^{-T^\varepsilon}).
\end{equation}

In addition, we know with probability at least $1-O(1/K)=1-O(e^{T^{\varepsilon}})$ that 
$$x(1,t_1'-1) - l_1 x_j^* = O\big(\sqrt{l_1} (\log K)^\frac{1}{2} \big)  = O(T^{\frac{a}{2}+\frac{\varepsilon}{2}}) 
= O(T^{\frac{a}{2}+\gamma}) 
= O(T^c).$$

Thus, we have 
\begin{equation}\label{eq:fd:case3:4}
P\big(E_{1,j}^{(b)}\big) = O\bigg(x(1,t_1'-1) - l_1 x_j^* > \frac{1}{3}\lambda_j T^{c} \bigg) = O(e^{-T^\varepsilon}).
\end{equation}

By Hoeffding's inequality, we have 
\begin{align}
    P\big(E_{1,j}^{(c)}\big) &\leq
    P\big(\Gamma(t_1',T) > \frac{1}{3} \lambda_j T^{b-a+c}\big) \nonumber\\
    &\leq \sum_{j\in J_{\lambda}} 
    P\left(\Big|\Lambda_j(t_1',T) - \lambda_j(T-l_1) \Big| > \frac{\lambda_j T^{b-a+c}}{3\kappa |J_{\lambda}|+1} \right) \nonumber\\
    &\leq 2|J_{\lambda}|\exp\bigg(-
    \frac{2\lambda_j^2 \cdot T^{2(b-a+c)}}{(3\kappa |J_{\lambda}|+1)^2 \cdot (T-l_1)}\bigg) \nonumber\\
    &= O(e^{-T^\varepsilon}), \label{eq:fd:case3:5}
\end{align}
where the last inequality follows since 
$2(b-a+c)-1=2\beta - \alpha + 2\gamma \geq 2\gamma > \varepsilon > 0$.

Therefore, combining \eqref{eq:fd:case3:3}, \eqref{eq:fd:case3:4} and \eqref{eq:fd:case3:5}, we have 
\begin{equation}\label{eq:fd:case3:10}
P(E_{1,j}^\complement \mid j\in(\calA\cup  \calR)^\complement) 
\leq P\big(E_{1,j}^{(a)}
\cup E_{1,j}^{(b)}
\cup E_{1,j}^{(c)} \mid j\in(\calA\cup  \calR)^\complement \big)
= O(e^{-T^\varepsilon}).
\end{equation}

Now consider event $E_{2,j}$ conditioned on $j\in(\calA\cup \calR)^\complement$.

We know by \eqref{eq:fd:x} that $\bar{z}_j(t_1') \leq (T-l_1)x_j^*+\Gamma(t_1',T)$.
Thus, we have 
\begin{align}
P(E_{2,j}^{\complement}) &=
P\big(\bar{z}_j(t_1') > \bar{x}(t_1', t_2'-1) + \Lambda(t_2',T) \big) \nonumber\\
&\leq P\big( (T-l_1)x_j^*+\Gamma(t_1',T) > \bar{x}(t_1', t_2'-1) + \Lambda(t_2',T) \big) \\
&\leq P\big( (T-l_1)x_j^*+ \Gamma(t_1',T) > \bar{x}_j(t_1',t_2'-1) - | \Delta_j(t_2',T) | + \lambda_j (T-t_2'+1) \big).
\end{align}

Observe that the event is equivalent to 
\begin{align}
    \bigg\{ l_2 x^*_j &- \bar{x}_j(t_1',t_2'-1)
    + \frac{1}{l_1}(T - t_2'+1)\bigg( l_1x_j^* - x_j(1,t_1'-1) \bigg) \nonumber \\
    &+ \Gamma(t_1',T) + | \Delta_j(t_2',T) |
    > \frac{1}{l_1}(T - t_2'+1)
    \bigg( \lambda_j l_1 - x_j(1,t_1'-1) \bigg) \bigg\}
\end{align}

Therefore, when $E_{2,j}^c$ happens, at least one of the following four events must happen:
\begin{equation}
   E_{2,j}^{(a)} : \; \left\{ 
   l_2 x^*_j - \bar{x}_j(t_1',t_2'-1)
    > \frac{1}{4} \frac{1}{l_1}(T - t_2'+1)
    \bigg( \lambda_j l_1 - x_j(1,t_1'-1) \bigg) \right\},
\end{equation}
\begin{equation}
   E_{2,j}^{(b)} :\; \left\{ \frac{1}{l_1}(T - t_2'+1)
   \bigg( l_1 x_j^* - x(1,t_1'-1) \bigg)  > 
   \frac{1}{4} \frac{1}{l_1}(T - t_2'+1)
    \bigg( \lambda_j l_1 - x_j(1,t_1'-1) \bigg) \right\},
\end{equation}
\begin{equation}
   E_{2,j}^{(c)} :\; \left\{ \Gamma(t_1',T) > \frac{1}{4} \frac{1}{l_1}(T - t_2'+1)
    \bigg( \lambda_j l_1 - x_j(1,t_1'-1) \bigg) \right\}.
\end{equation}
\begin{equation}
   E_{2,j}^{(d)} :\; \left\{ | \Delta_j(t_2',T) | > \frac{1}{4} \frac{1}{l_1}(T - t_2'+1)
    \bigg( \lambda_j l_1 - x_j(1,t_1'-1) \bigg) \right\}.
\end{equation}

By Proposition~\ref{prop:ff+:x}, we know with probability at least $1-O(1/K)=1-O(e^{T^{\varepsilon}})$ that 
$$ l_2 x_j^* - \bar{x}(t_1',t_2'-1)= O\bigg(\sqrt{l_2}(\log K)^{\frac{1}{2}} \bigg)
= O(T^{\frac{1}{2}+\frac{\varepsilon}{2}}).$$

Since $x_j(1,t_1'-1) \leq \lambda_j (T^{a}-T^{c})$ given $j\in(A\cup R)^C$, we know $\lambda_j l_1 - x_j(1,t_1'-1) \geq \lambda_j T^{c}$, and hence
\[\frac{1}{l_1}(T - t_2'+1)
\bigg( \lambda_j l_1 - x_j(1,t_1'-1) \bigg) 
>T^{b-a+c} . \]

Therefore, we have
\begin{equation}\label{eq:fd:case3:6}
P\big(E_{2,j}^{(a)}\big) \leq P\bigg(l_1 x_j^* - \bar{x}(t_1',t_2'-1) > \frac{1}{4}\lambda_j T^{b-a+c} \bigg) = O(e^{-T^\varepsilon}).
\end{equation}

In addition, we know with probability at least $1-O(1/K)=1-O(e^{T^{\varepsilon}})$ that 
$$ l_1 x_j^* - x(1,t_1'-1) = O\big(\sqrt{l_1} (\log K)^{\frac{1}{2}} \big) = O(T^{\frac{a}{2}+\frac{\varepsilon}{2}})
= O(T^c).$$

Thus, we have 
\begin{equation}\label{eq:fd:case3:7}
P\big(E_{2,j}^{(b)}\big) \leq P\bigg(l_1 x_j^* - x(1,t_1'-1) > \frac{1}{4}\lambda_j T^c \bigg) = O(e^{-T^\varepsilon}).
\end{equation}

By Hoeffding's inequality, we have 
\begin{align}
    P\big(E_{2,j}^{(c)}\big) &\leq
    P\big(\Gamma(t_1',T) > \frac{1}{4} \lambda_j T^{b-a+c}\big) \nonumber\\
    &\leq \sum_{j\in J_{\lambda}} 
    P\left(\Big|\Lambda_j(t_1',T) - \lambda_j(T-l_1) \Big| > \frac{\lambda_j T^{b-a+c}}{4\kappa |J_{\lambda}|+1} \right) \nonumber\\
    &\leq 2|J_{\lambda}|\exp\bigg(-
    \frac{2\lambda_j^2 \cdot T^{2(b-a+c)}}{(4\kappa |J_{\lambda}|+1)^2 \cdot (T-l_1)}\bigg) \nonumber \\
    &= O(e^{-T^\varepsilon}). \label{eq:fd:case3:8}
\end{align}

In addition, we have
\begin{align}
    P\big(E_{2,j}^{(d)}\big) &\leq
    P\big(| \Delta_j(t_2',T) | > \frac{1}{4} \lambda_j T^{b-a+c}\big) \nonumber\\
    &\leq 2 \exp\bigg(-
    \frac{\lambda_j^2 \cdot T^{2(b-a+c)}}{8 \cdot (T-l_1)}\bigg)  \nonumber\\
    &= O(e^{-T^\varepsilon}).\label{eq:fd:case3:9}
\end{align}

Therefore, combining \eqref{eq:fd:case3:6}, \eqref{eq:fd:case3:7}, \eqref{eq:fd:case3:8} and \eqref{eq:fd:case3:9}, we have 
\begin{equation}\label{eq:fd:case3:11}
P(E_{2,j}^\complement \mid j\in(\calA\cup  \calR)^\complement) 
\leq P\big(E_{2,j}^{(a)}
\cup E_{2,j}^{(b)}
\cup E_{2,j}^{(c)} 
\cup E_{2,j}^{(d)} \mid j\in(\calA\cup  \calR)^\complement \big)
= O(e^{-T^\varepsilon}).
\end{equation}

Based on the results in \eqref{eq:fd:case3:10}
and \eqref{eq:fd:case3:11},
we then know in case 3) that
\begin{align}
P(E_j) &\geq P(E_j \mid j\in (\calA\cup \calR)^\complement) P(j \in (\calA\cup \calR)^\complement) \nonumber\\
&\geq \left(1-P(E_{1,j}^\complement \mid j\in (\calA\cup \calR)^\complement)-
P(E_{2,j}^\complement \mid j\in (\calA\cup  \calR)^\complement) \right)P(j \in (\calA\cup  \calR)^\complement)  \nonumber\\
&= 1- O(e^{-T^\varepsilon}). \label{eq:fd:case3}
\end{align}

Now combining the results in all three cases \eqref{eq:fd:case1}, \eqref{eq:fd:case2} and \eqref{eq:fd:case3}, we obtain $P(E_j) = 1-O(e^{-T^\varepsilon})$ for all $j\in[n]$,
given that $0 < \varepsilon < 2\gamma$, $\varepsilon\leq \beta$ and $\varepsilon \geq \frac{\log (\log T)}{\log T}$.
This completes the proof. \halmos

\end{proof}

\newpage

\section{Variants of Algorithms}\label{ap:resolve}
\subsection{Primal-dual restart algorithm}

Algorithm~\ref{alg:fr} shows the detailed procedure of our restart algorithm that does not require solving any LPs. The algorithm includes a total of $S+1$ \emph{epochs}. 
Each epoch $u$, where $u=0,1,\ldots, S$, starts from period $T-\tau_{u}+1$ and ends at period  $T-\tau_{u+1}$, where $\tau_u$ is the time length between the starting period of epoch $u$ and period $T$. At the beginning of each epoch, Algorithm~\ref{alg:fr} restarts Algorithm~\ref{alg:fd}.
The restart schedule is shown in Figure~\ref{fig:resolve}.


\begin{figure}[h]
    \centering
    \includegraphics[width=0.7\textwidth]{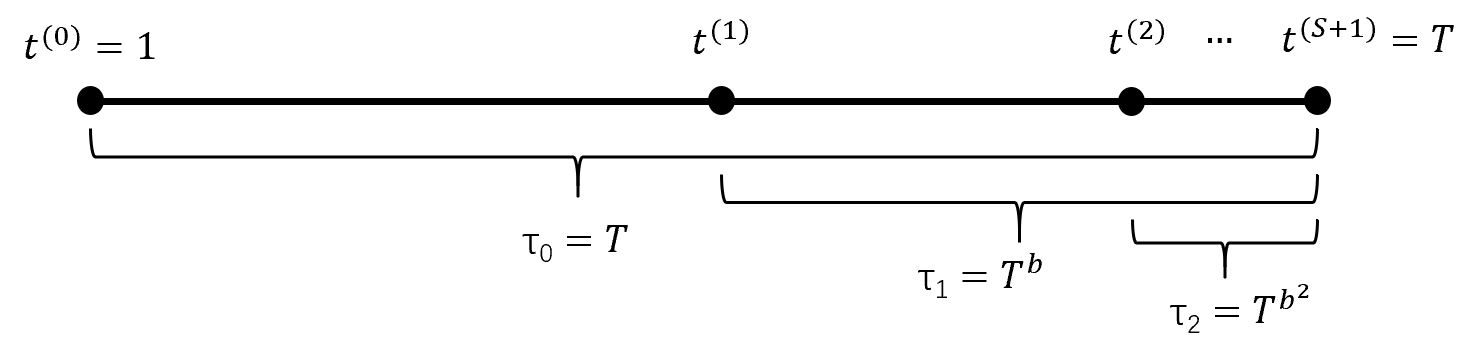}
    \caption{The restart schedule of Algorithm~\ref{alg:fr}}
    \label{fig:resolve}
\end{figure}

\begin{algorithm}[h]
\caption{Primal-Dual Restart ($\FR$)} \label{alg:fr}
\footnotesize
\begin{algorithmic}[1]
\STATE \textbf{Input:} time length $\tau_u$, parameters $\alpha_u$, $\beta_u$, $\gamma_u$ for $u=0,1,\ldots, S$.
\STATE \textbf{Initialize:}
set initial capacity $C(1)=C$;
\FOR{$u = 0,1,\ldots,S$}
\IF{$u < S$}
\STATE $C^{(u)} \gets C(T- \tau_u+1)$, $t_1^{(u)} \gets T- \tau_u + 1$ and  $t_2^{(u)} \gets T-\tau_{u+1}$
\ELSE{}
\STATE $C^{(u)}  \gets  C(T- \tau_u+1)$, $t_1^{(u)} \gets T- \tau_u+1$ and  $t_2^{(u)} \gets T$
\ENDIF
\STATE Run Algorithm~\ref{alg:fd} subroutine which begins with $t_1^{(u)}$ and ends with $t_2^{(u)}$. Let the initial capacity be  $C^{(u)}$, let technical parameters be $\alpha_u, \beta_u, \gamma_u$.
\ENDFOR
\end{algorithmic}
\end{algorithm}

\begin{algorithm}
\caption{LP-based Thresholding ($\LPT$)}
\footnotesize
\begin{algorithmic}[1]\label{alg:lpt}
\STATE \textbf{Input:} start time $t_1$, end time $t_2$, initial capacity at start time $B$; \\
\hspace{1cm} parameter $\beta$ where $\beta \in (0,\frac{1}{2})$, parameter $d$ where $d<(-\beta, \beta - \frac{1}{2}) $
\STATE \textbf{Initialize:} set  $b=\frac{1}{2}+\beta$, set $L=T-t_1+1$, $l_1 =\lceil L - L^b \rceil$, set $B(t_1) = B$, \\
\STATE calculate $x^* = \arg \max_x \left\{ \sum_{j\in[n]} r_j x_j  \mid  \sum_{j=1}^n A_j x_j \leq \frac{B(t_1)}{L}, 0 \leq x_j \leq \lambda_j, \, \forall\, j \in [n] \right\}$.
\FOR{$j \in [n]$}
\IF{$x^*_j < \lambda_j L^{d}$}  
\STATE set $p_j \gets 0$
\ELSIF{$x^*_j > \lambda_j (1 - L^{d})$}
\STATE set $p_j \gets 1$
\ELSE{}
\STATE set $p_j \gets x_j^{*}/\lambda_j$
\ENDIF
\ENDFOR
\STATE \textbf{Phase I.} Thresholding
\FOR{$t= t_1, t_1+1,\ldots, t_1 + l_1 -1$}
\STATE Observe customer of type $j(t)$
\IF{$A_j \leq B(t) \, \forall \, j \in [n]$}  
\STATE Accept the customer with probability $p_{j(t)}$;
\STATE If accepted, update $B(t+1) \gets B(t) - A_{j(t)}$
\ELSE{}
\STATE Reject the customer
\ENDIF
\ENDFOR
\STATE \textbf{Phase II.} Run Algorithm~\ref{alg:ff} subroutine with start time $t_1+l_1$, end time $t_2$, and initial capacity $B(t_1+l_1)$.
\end{algorithmic}
\end{algorithm}

\begin{algorithm}[h]
\caption{Hybrid Restart ($\MR$)} \label{alg:mr}
\footnotesize
\begin{algorithmic}[1]
\STATE \textbf{Input:}
time length $\tau_u$ for $u=0,1,\ldots,S$; \\
(Algorithm~\ref{alg:lpt} subroutine) parameters $\beta_u$ and $d_u$ for $u=0,1,\ldots,U$; \\
(Algorithm~\ref{alg:fd} subroutine) parameters $\alpha_u$, $\beta_u$ and $\gamma_u$ for $u=U+1,\ldots S$.
\STATE \textbf{Initialize:}
set initial capacity $C(1)=C$
\FOR{$u = 0,1,\ldots,S$}
\IF{$u < S$}
\STATE $C^{(u)} \gets C(T- \tau_u+1)$, $t_1^{(u)} \gets T- \tau_u + 1$ and  $t_2^{(u)} \gets T-\tau_{u+1}$
\ELSE{}
\STATE $C^{(u)}  \gets  C(T- \tau_u+1)$, $t_1^{(u)} \gets T- \tau_u + 1$ and  $t_2^{(u)} \gets T$
\ENDIF
\IF{$u < U$}
\STATE Run Algorithm~\ref{alg:lpt} with start time $t_1^{(u)}$, end time $t_2^{(u)}$, initial capacity $C^{(u)}$ at start time, \\
and parameters $\beta_u$, $d_u$.
\ELSE{}
\STATE Run Algorithm~\ref{alg:fd} which begins with $t_1^{(u)}$ and ends with $t_2^{(u)}$. Let the initial capacity be  $C^{(u)}$, let technical parameters be $\alpha_u, \beta_u, \gamma_u$.
\ENDIF
\ENDFOR
\end{algorithmic}
\end{algorithm}

\subsection{Hybrid restart algorithm}

In the following, we first introduce the LP-based thresholding algorithm in Algorithm~\ref{alg:lpt}, which is called as subroutines in 
Algorithm~\ref{alg:mr}.

Algorithm~\ref{alg:lpt} takes as inputs a start time $t_1$, an end time $t_2$, initial capacity $B$ at the start time, and technical parameters $\beta$ and $d$ that satisfy $\beta \in (0,\frac{1}{2})$ and $d<(-\beta, \beta - \frac{1}{2})$.

At the beginning of the time horizon, Algorithm~\ref{alg:lpt} solves a DLP, and uses its optimal solution to calculates the thresholding conditions. The entire horizon is then divided in two ``phases": in phase I, Algorithm~\ref{alg:lpt} makes probabilistic allocation decisions;
in phase II, Algorithm~\ref{alg:lpt} runs the primal-dual algorithm in Algorithm~\ref{alg:ff}.


Now we introduce Algorithm~\ref{alg:mr}, which is a hybrid of Algorithm~\ref{alg:lpt} and Algorithm~\ref{alg:fd} under a similar restart schedule shown in Figure~\ref{fig:resolve}.

Algorithm~\ref{alg:mr} includes a total of $S+1$ epochs. Each epoch $u$, where $u=0,1,\ldots, S$, starts from period $T-\tau_{u}+1$ and ends at period  $T-\tau_{u+1}$, where $\tau_u$ is the time length between the starting period of epoch $u$ and period $T$.
At the beginning of the first $U$ epochs, Algorithm~\ref{alg:mr} runs (restarts) Algorithm~\ref{alg:lpt} as a a subroutine, and 
at the beginning of the remaining $S-U+1$ epochs, Algorithm~\ref{alg:mr} runs (restarts) Algorithm~\ref{alg:fd} as a subroutine.

More precisely, we design the restart schedule as follows: in the first $U$ epochs, Algorithm~\ref{alg:mr} only runs phase I of Algorithm~\ref{alg:lpt}; in the next $S-U$ epochs, Algorithm~\ref{alg:mr} only runs phase I and phase II of Algorithm~\ref{alg:fd}; and in the last epoch, Algorithm~\ref{alg:mr} runs all three phases of Algorithm~\ref{alg:fd}.


\end{APPENDICES}

\end{document}